\documentclass[11pt,reqno,a4paper]{amsart}

\usepackage[margin=1in]{geometry}
\usepackage{graphicx}
\usepackage{amsaddr}
\usepackage{graphicx, float, changepage, subcaption} 
\usepackage[hidelinks]{hyperref} 
\usepackage{color}
\usepackage{amssymb, amsmath, dsfont,amsthm}
\usepackage{cleveref}
\usepackage{enumerate}
\usepackage{tabls}

\newcommand{\eps}{\varepsilon}

\renewcommand{\d}{\, \mathrm{d}}
\newcommand{\integral}{\mathrm{int}}
\newcommand{\EM}{\mathrm{EM}}
\newcommand{\rmE}{\mathrm{E}}
\newcommand{\rmL}{\mathrm{L}}
\newcommand{\E}{ \mathbb{E}}
\newcommand{\G}{ \mathcal{G}}
\newcommand{\grd}{\nabla}

\newcommand{\GmaN}{\mathcal{G}^{\mathrm{E}}_N}
\newcommand{\GmiN}{\mathcal{G}^{\mathrm{L}}_N}
\newcommand{\dist}{\operatorname{dist}}

\newtheorem{theorem}{Theorem}
\newtheorem{example}{Example}
\newtheorem{remark}{Remark}
\newtheorem{assumption}{Assumption}
\newtheorem{proposition}{Proposition}
\newtheorem{corollary}{Corollary}
\newtheorem{lemma}{Lemma}
\makeatletter
\def\namedlabel#1#2{\begingroup
	\def\@currentlabel{#2}%
	\phantomsection\label{#1}\endgroup
}
\makeatother
\makeatletter
\renewcommand{\paragraph}{%
	\@startsection{paragraph}{4}{\z@}%
	{1.5ex plus 0.5ex minus 0.2ex}%
	{-1em}%
	{\normalfont\normalsize\bfseries}%
}
\makeatother

\title[Two Adjoint Perspectives on Fokker-Planck: Micro-Macro Correspondence]{Two Adjoint Perspectives on Fokker-Planck Optimization: A Microscopic-Macroscopic Correspondence}

\author{Kathrin Hellmuth$^{a}$, Qin Li$^{b}$, Yunan Yang$^{c}$
}
\address{$^{a}$Department of Computing and Mathematical Sciences, California Institute of Technology, USA\\
	$^{b}$ Department of Mathematics, University of Wisconsin-Madison, USA\\
	$^{c}$  Department of Mathematics, Cornell University, USA}

\email{hellmuth@caltech.edu}
\email{qinli@math.wisc.edu}
\email{yunan.yang@cornell.edu}

\begin{document}

	\maketitle
	
	\begin{abstract}
	The Fokker--Planck equation admits both a macroscopic Eulerian description through probability densities and a microscopic Lagrangian description through stochastic trajectories. Consequently, optimization problems constrained by the Fokker--Planck equation can be formulated from either perspective. Surprisingly, the corresponding adjoint equations appear to be fundamentally different: the macroscopic adjoint is governed by the backward Kolmogorov equation, whereas the microscopic adjoint evolves pathwise along stochastic trajectories. In this note, we reconcile these two formulations by establishing their correspondence in the continuum setting. We further show that, although their discrete gradients no longer coincide after discretization, both provide consistent numerical approximations of the continuum gradient. Explicit convergence rates are established for both discretization strategies.
	\end{abstract}
	\begin{quote}
		\noindent 
		{\small {\bf Keywords:} Fokker--Planck equation, adjoint methods, stochastic particle methods, Eulerian vs. Lagrangian perspective, optimize-then-discretize, PDE-constraint optimization, Monte Carlo approximation }
	\end{quote}
	
	\section{Introduction}

	The Fokker--Planck equation is one of the most fundamental evolution equations describing the transport of probability distributions under drift and diffusion. Its adjoint equation plays a central role in inverse problems, stochastic optimal control, mean-field games, and PDE-constrained optimization. For the PDE-constrained optimization problem
	\[
	\min_a \mathcal{J}(\rho_T),
	\]
	with $\rho$ satisfying the prototype equation
	\begin{equation}\label{eq:FP}
		\partial_t\rho_t+\nabla\cdot(a(x)\rho_t)=\Delta\rho_t,
	\end{equation}
	on $x\in \mathbb R^d, t\in (0,T]$, the associated adjoint is the backward Kolmogorov equation
	\begin{equation}\label{eq:adjoint}
		\partial_t\mu_t+a(x)\cdot\nabla\mu_t+\Delta\mu_t=0\,,\quad\text{with}\quad \mu_T= \frac{\delta\mathcal{J}}{\delta\rho_T}({\rho_T})\,,
	\end{equation}
	and the gradient with respect to the control variable is assembled through the forward and adjoint variables as
	\begin{equation}\label{eq:gradient}
		\G[a](x)=\frac{\delta\mathcal J}{\delta a}(x)
		=
		\int_0^T
		\rho(t,x)\nabla\mu(t,x)\d t.
	\end{equation}
	This gradient term is then deployed in an optimization iteration. The main object examined in this paper is this gradient term at any fixed $a$, and throughout the paper, we drop $a$ dependence. When no ambiguity arises, we use shorthand notation $\rho_t$ and $\mu_t$ to denote $\rho(t,\cdot)$ and $\mu(t,\cdot)$.
	
	To numerically compute this gradient, one needs to examine the effects of discretization. We focus on particle-type discretization of the forward equation in sense of Monte Carlo approximation, and study how they affect the computation of~\eqref{eq:gradient}.
	
	More specifically, we adopt the Lagrangian formulation and simulate the Fokker--Planck equation's solution by particles/samples that evolve forward in time according to a stochastic differential equation (SDE),
	\begin{equation}\label{eq:particle}
		\rho_t
		\approx
		\rho_t^N
		=
		\frac1N\sum_{i=1}^N\delta_{X_{i,t}},
		\qquad
		dX_{i,t}
		=
		a(X_{i,t})\d t+\sqrt2\,dW_{i,t}\,.
	\end{equation}
	The empirical measure $\rho_t^N$ converges to the solution of~\eqref{eq:FP}, both in the weak and the Wasserstein sense. If one differentiates the particle formulation~\eqref{eq:particle} directly, a straightforward sensitivity analysis shows that the corresponding particle adjoint satisfies the pathwise ordinary differential equation (ODE)
	\begin{equation}\label{eq:adjoint_particle}
		dY_{i,t}
		=
		-\nabla a(X_{i,t})^\top Y_{i,t}\d t\,,\quad\text{with}\quad Y_{i,T}=\nabla\frac{\delta\mathcal J}{\delta\rho_T}(\rho^N_T)(X_{i,T}).
	\end{equation}
	Surprisingly, this microscopic adjoint bears little resemblance to the backward Kolmogorov equation~\eqref{eq:adjoint}. While~\eqref{eq:adjoint} is a deterministic backward parabolic PDE, ~\eqref{eq:adjoint_particle} is a pathwise ODE evolving along stochastic trajectories. At first sight, it is not even clear why the two formulations should be connected to the same gradient.
	
	This mismatch is reminiscent of the classical distinction between \emph{optimize-then-discretize} (OTD) and \emph{discretize-then-optimize} (DTO). The particle representation can be considered a discretization and the adjoint~\eqref{eq:adjoint_particle} arises from differentiating these discretized dynamics, whereas the Kolmogorov adjoint~\eqref{eq:adjoint} is obtained by first deriving the optimality system at the continuum level, before discretization. The two procedures do not commute, leading to two distinct numerical representations of the continuum gradient~\eqref{eq:gradient}.
	
	This is the central question of the paper:
	
	\medskip
	\begin{center}
		\emph{How can the microscopic and macroscopic adjoint formulations be reconciled?}
	\end{center}
	\medskip
	
	The goal of this note is to clarify this apparent discrepancy. We show that the two formulations represent two complementary differentiation paradigms for the same optimization problem: an Eulerian (PDE-based) perspective and a Lagrangian (particle-based) perspective.
	
	We first remain at the continuum level and introduce the stochastic representation of the Fokker--Planck equation. Instead of viewing the state variable solely as a probability density, we represent it by the stochastic process
	\begin{equation}\label{eq:process}
		dX_t=a(X_t)\d t+\sqrt{2}\,dW_t,     \qquad \mathrm{Law}(X_t)=\rho_t.
	\end{equation}
	Associated with each sample path is the pathwise adjoint variable
	\begin{equation}\label{eq:process_adjoint}
		dY_t=- \nabla a(X_t)^\top Y_t\d t,\quad\text{with}\quad Y_T=\nabla\frac{\delta\mathcal J}{\delta\rho_T}(\rho_T)(X_T),
	\end{equation}
	where $\nabla a(x)\in \mathbb R^{d\times d}$ denotes the Jacobian matrix of $a:\mathbb R^d\to \mathbb R^d$.

	The main results of this paper are threefold.
	
	\begin{itemize}
		\item \textbf{Continuum correspondence.}
		We first show that the continuum gradient admits two equivalent representations,
		\[
		\G(x)=\underbrace{\int_0^T\nabla\mu_t(x) \rho_t(x)\d t}_{\text{Eulerian}}=\underbrace{\mathbb E\left[\int_0^T Y_t\delta_{x-X_t}\d t\right]}_{\text{Lagrangian}}.
		\]
		The key connection between the two formulations is the conditional expectation identity
		\begin{equation}
			\mathbb{E}\!\left(Y_t \mid X_t=x\right)
			=
			\nabla \mu(t,x),
			\qquad \rho_t\text{-a.e.}
		\end{equation}
		which explains why both adjoint formulations produce the same first-order variation. This correspondence is established in Section~2.
		
		\item \textbf{Eulerian discretization.}
		Discretizing the macroscopic formulation yields the numerical gradient
		\[
		\GmaN(x)=\int_0^T\rho^N(t,x)\nabla\mu^N(t,x)\d t,
		\]
		where $\rho^N$ is a numerical approximation of $\rho$, and $\mu^N$ solves the adjoint equation~\eqref{eq:adjoint} with terminal condition determined by $\rho^N(T)$. Under suitable assumptions we prove that for a suitable distance function $\dist(\cdot, \cdot)$, one has
		\[
		\dist(\GmaN,\G)=\mathcal O\!\left(
		W_1(\rho^N,\rho)\right).
		\]
		When $\rho^N$ is generated by the particle approximation~\eqref{eq:particle}, this further gives the Monte Carlo sampling rate from \cite{fournier2015rate}
		\begin{equation}\label{eq:alphaN}
			\alpha(N):=
			\begin{cases}
				N^{-1/2},&d=1,\\
				N^{-1/2}\log(1+N),&d=2,\\
				N^{-1/d},&d>2,
			\end{cases}
		\end{equation}
		and thus
		\[
		\dist(\GmaN,\G)=\mathcal O(\alpha(N))\,.
		\]
		We present the precise topology and assumptions in Section~3.
		
		\item \textbf{Lagrangian discretization.}
		Alternatively, one may discretize the stochastic representation directly by replacing the expectation in the Lagrangian formulation with finitely many sample paths~\eqref{eq:particle}-\eqref{eq:adjoint_particle}. This leads to an intuitive approximation:
		\[
		\GmiN=\frac1N\sum_{i=1}^N\int_0^TY_{i,t}\,\delta_{x-X_{i,t}}
		\d t\,.
		\]
		We will establish
		\[
		\dist(\GmiN,\G)=\mathcal O(\alpha(N))
		\]
		with the precise notion of convergence presented in Section~4.
	\end{itemize}

	It is worth emphasizing that the two numerical approximations are fundamentally different objects. The Eulerian approximation $\GmaN$ is obtained by solving a discretized adjoint PDE and therefore remains a Eulerian quantity throughout. In contrast, the Lagrangian approximation $\GmiN$ is defined directly on stochastic trajectories and never invokes the adjoint PDE. Consequently, the continuum identity~\eqref{eq:adjoint_connection} no longer survives discretization. Indeed, the naive discrete counterpart is false:
	\[
	\nabla\mu^N(t,x)\neq \left.\frac1N\sum_{i=1}^N Y_{i,t}\,\right\rvert\,X_{i,t}=x\, \qquad \rho_t\text{-a.s.}.
	\]
	The OTD and DTO procedures therefore produce genuinely different finite-dimensional adjoints, even though both converge to the same continuum gradient.
	
	In the subsequent sections, we discuss continuum correspondence, followed by  the Eulerian and Lagrangian discretizations. Some a-priori estimates are repeatedly used throughout the error analysis, and we summarize them in Appendix \Cref{sec:gman_helper}.

	\paragraph{Relation to existing work.} The Fokker--Planck equation is one of the most fundamental PDE models and has  been widely studied. The gradient \eqref{eq:gradient} of an observable $\mathcal J(\rho_T)$ appears in a variety of contexts. It's Eulerian form \eqref{eq:gradient} appeared early in statistical mechanics, where  the celebrated fluctuation-dissipation theorem relates the linear response of an equilibrium system to a small perturbation to  temporal correlation functions of the unperturbed system \cite{kubo1957statistical,agarwal1972fluctuation,prost2009generalized}, with applications, e.g., in parameter inference in molecular dynamics \cite{ercole2017accurate}, climate response \cite{leith1975climate,giorgini2025predicting} and active matter \cite{burkholder2019fluctuation}.
	Related optimization formulations are also widely used, especially in the context of optimal control of probability laws, where backward Kolmogorov or Pontryagin-type adjoint are prominent concepts~\cite{AnnunziatoBorzi2018,roy2018fokker,BreitenbachBorzi2020}.
	For deterministic continuity equations, equivalence among Lagrangian, Eulerian, and Kantorovich formulations, as well as their  particle-to-continuum convergence, have also been established~\cite{CavagnariLisiniOrrieriSavare2022}. Our focus is different: rather than comparing optimal controls, we compare two numerical representations of the reduced gradient at a fixed control. 
	
	From the viewpoint of stochastic processes, several distinct objects are commonly referred to as adjoints. Classical stochastic-flow and semigroup-sensitivity formulas represent derivatives of backward Kolmogorov solutions in terms of the Jacobian of the stochastic flow \cite{Kunita1990,ElworthyLi1994}. Numerically, the Feynman--Kac formula has been deployed~\cite{YANG2021110564} as the pathwise representation of backward Kolmogorov. In stochastic optimal control, the adapted adjoint is instead described by a backward stochastic differential equation, whose martingale term enforces adaptedness \cite{PardouxPeng1990}. By contrast, reverse-mode or pathwise differentiation of a realized SDE trajectory produces a random backward ODE such as Equation~\eqref{eq:process_adjoint} below, computed with the Brownian path held fixed \cite{LiWongChenDuvenaud2020,KidgerFosterLiLyons2021, LeburuNurbekyanRuthotto2026}. Because its terminal value contains future information, this pathwise adjoint is generally not adapted. Its conditional projection onto the present state is the Markovian adjoint, which is precisely the mechanism expressed by \eqref{eq:adjoint_connection} in~\Cref{sec:cont}.

	This conditional-projection viewpoint has recently appeared in the literature on adjoint matching for stochastic optimal control. Domingo-Enrich et al.\ introduced a backward ``lean adjoint'' ODE along simulated trajectories and incorporated it into a regression-based stochastic-control method \cite{DomingoEnrichEtAl2025}. Domingo-Enrich and Han subsequently related this pathwise ODE to the adapted adjoint BSDE arising from the stochastic maximum principle and established equality after conditioning on the current state \cite{DomingoEnrichHan2026}. In the additive-noise, terminal-cost setting considered here, their lean adjoint specializes to Equation~\eqref{eq:process_adjoint}, and the corresponding conditional relation specializes to Equation~\eqref{eq:adjoint_connection}. The emphasis of the present work is instead on identifying the PDE gradient with an adjoint-weighted measure and on comparing the two finite-particle approximations resulting from the Eulerian and Lagrangian constructions. 
	
	At the numerical level, the issue is closely connected to the classical optimize--then--discretize (OTD) versus discretize--then--optimize distinction (DTO). For Monte Carlo PDE solvers, the numerical state and adjoint may both be empirical measures, so a formal product of independently sampled forward and adjoint approximations is not defined. In radiative-transfer and kinetic settings, this obstruction has motivated both DTO particle adjoints and correlated OTD estimators that reuse the forward samples and interpret the adjoint as a test function \cite{LiWangYang2023,CaflischYang2024,chen2026inferenceinteractingkernelmeanfield}. One practical DTO strategy is to record the random number generator state~\cite{L_vbak_2024}. Related work on SDE-constrained calibration likewise contrasts law-level Fokker--Planck formulations with direct trajectory-wise Monte Carlo differentiation \cite{BartschDenkVolkwein2024}. The present analysis isolates this distinction for a diffusion process and quantifies how the two finite-dimensional adjoints, although different for fixed $N$, approximate the same continuum gradient.

	\section{Continuum Correspondence}\label{sec:cont}
	
	Before introducing any numerical discretization, we first compare the Eulerian and Lagrangian formulations at the continuum level. The Eulerian description is expressed in terms of the forward Fokker--Planck equation and its backward Kolmogorov adjoint, whereas the Lagrangian description follows the underlying stochastic trajectories together with their pathwise adjoints. The main result of this section shows that these two apparently different constructions yield the same continuum gradient. In particular, we establish the conditional-expectation identity that connects the pathwise adjoint to the spatial gradient of the Kolmogorov adjoint and thereby reconciles the two formulations.
	
	Since the well-posedness of the forward and adjoint problems already requires certain regularity, we summarize our assumptions on $a$, $\rho_0$ and $\mathcal J$. More details are in~\Cref{ssec:apriori:Continuum,ssec:apriori:particle}.

	\begin{assumption}\label{assumptions}
		We impose the following regularity assumptions on the velocity field $a$, the initial data $\rho_0$ and the objective function $\mathcal J$: Let $\theta \in (0,1)$ be a H\"older exponent  such that
		\begin{enumerate}
			\item[(A1)] \makeatletter\protected@edef\@currentlabel{(A1)}\makeatother
			\label{assumptions:a}
			$a\in C^{1,\theta}_b(\mathbb R^d; \mathbb R^d)$,  and 
			\item[(A2)] \makeatletter\protected@edef\@currentlabel{(A2)}\makeatother \label{assumptions:rho0}
			$\rho_0 \in \mathcal P_q(\mathbb R^d)$ is a probability measure over $\mathbb R^d$ with finite $q$-th moment, for $q=2$ when $d>2$ and $q>2$ otherwise.
			\item[(A3$^\mathrm{E}$)] \makeatletter\protected@edef\@currentlabel{(A3$^\mathrm{E}$)}\makeatother\label{assumptions:Eulerian}  
			There exists a polynomial $\mathfrak{p}>0$  such that the first variation $\frac{\delta\mathcal J}{\delta \rho}$ and its derivatives are polynomially bounded and Lipschitz, in the sense that there exists a constant $C_J$ such that for all measures
			$\nu,\nu' \in \mathcal P_q(\mathbb R^d)$
			$$\left.\frac {\delta \mathcal J}{\delta \rho}\right\rvert_{\nu} \in C^{2,\theta}_\mathfrak{p} \quad 
			\text{and} \quad \left\|\left.\frac {\delta \mathcal J}{\delta \rho}\right\rvert_{\nu}- \left.\frac {\delta \mathcal J}{\delta \rho}\right\rvert_{\nu'}\right\|_{C^{2,\theta}_\mathfrak{p}} \leq C_J W_1(\nu,\nu').$$
			Here $C^{k,\theta}_\mathfrak{p}$ denotes the polynomially weighted H\"older space \cite{lorenzi2000optimal}:
			$$C^{k,\theta}_\mathfrak{p}(\mathbb R^d)=\left\{ f:\mathbb R^d \to \mathbb R \mid \frac {D^\beta f} {\mathfrak{p}} \in C^{\theta} \quad \text{ for all multi indices} \beta \text{ with }\left\lvert\beta\right\rvert\leq k\right\}.$$
			\item[(A3$^\mathrm{L}$)] \makeatletter\protected@edef\@currentlabel{(A3$^\mathrm{L}$)}\makeatother\label{assumptions:Lagrangian}
			There exists a constant $C_J$ such that for all  measures $\nu,\nu' \in \mathcal P_q(\mathbb R^d)$
			$$ \grd \left.\frac {\delta \mathcal J}{\delta \rho}\right\rvert_{\nu} \in L^2_\nu \quad 
			\text{and} \quad \left\|\left.\grd \frac {\delta \mathcal J}{\delta \rho}\right\rvert_{\nu}- \left.\grd\frac {\delta \mathcal J}{\delta \rho}\right\rvert_{\nu'}\right\|_{\infty} \leq C_J W_1(\nu,\nu').$$
			\item[(A4)]\makeatletter\protected@edef\@currentlabel{(A4)}\makeatother
			\label{assumptions:Euler:additional} For $\mathfrak p$ as in \ref{assumptions:Eulerian}, the final data lie in
			\begin{equation*}
				\left.\frac {\delta \mathcal J}{\delta \rho}\right\rvert_{\nu} \in C^{3,\theta}_\mathfrak{p} \qquad \text{for any} \qquad \nu \in \mathcal P_q(\mathbb R^d).
			\end{equation*}
		\end{enumerate}
	\end{assumption}

	\begin{remark}\label{rmk:assumptionsEulerLagrange}
		While~\ref{assumptions:a}-\ref{assumptions:Lagrangian} are imposed to ensure that the first variation is well defined, \ref{assumptions:Euler:additional} is imposed for the analysis of the discretization error in later sections. Comparing~\ref{assumptions:Eulerian} and~\ref{assumptions:Lagrangian}, we see that the regularity requirement in the Eulerian and Lagrangian settings are different. In the Lagrangian setting, the problem is formulated as ODEs and the assumptions are much simpler. In the Eulerian case, our assumptions place us in a Schauder fixed-point setting~\cite{lorenzi2000optimal}. Classical choices for the polynomials are $\mathfrak p(x) = C(1+x^{2m})$ for some $m\in \mathbb N$, and the H\"older space includes functions that have polynomial growth: $\left\lvert D^\beta f(x)\right\rvert \leq C \mathfrak{p}(x)$. Note further that if the polynomial degree $2m$ of $\mathfrak{p}$ does not exceed $\frac q 2$, then~\ref{assumptions:Eulerian} implies~\ref{assumptions:Lagrangian}.
	\end{remark}

	\begin{example} \label{rem:assumptions:linear+MMD}
		Assumptions~\ref{assumptions:Eulerian} and \ref{assumptions:Lagrangian} are stated abstractly, but they can be easily examined and are satisfied by a broad class of objective functionals in applications. We list two representative examples.
		\begin{enumerate}
			\item \textit{(linear objective)} For a fixed $\psi:\mathbb R^d\to \mathbb R$, consider
			
			$$\mathcal J(\mu)=\int_{\mathbb R^d} \psi(x)\,\mu(dx) \qquad \Rightarrow \qquad \frac{\delta J}{\delta \mu}(x)=\psi(x),$$
			which makes the stability estimates trivial. Hence, Assumption~\ref{assumptions:Eulerian} reduces to   $\psi \in C^2_{\mathfrak{p}}(\mathbb R^d)$ and Assumption~\ref{assumptions:Lagrangian} holds iff  $\grd \psi \in L^2_\nu$ for every $\nu\in \mathcal P_q$, which trivially holds under a sufficiently tight polynomial growth bound; see \Cref{rmk:assumptionsEulerLagrange}.
			This class covers linear observables such as moments, or pixel readings.
			\item \textit{(smooth kernel energy)}  Let $K:\mathbb R^d \times \mathbb R^d \to \mathbb R$ be $C^1$ and symmetric  and define the MMD energy
			\[
			\mathcal J(\mu)=\frac12\iint_{\mathbb R^d\times\mathbb R^d}
			K(x,y)\,\mu(dx)\mu(dy),
			\]
			One has 
			$$ \left.\frac {\delta \mathcal J}{\delta \rho}\right\rvert_{\nu}=\int_{\mathbb R^d} K(x,y)\,\nu(dy).$$
			If for some $\mathfrak p$ whose degree does not exceed $\frac q 2$, one has
			$$\|\grd K(\cdot,y) - \grd K(\cdot,y')\|_{\infty}\leq L \left\lvert y-y'\right\rvert \qquad \text{and}\qquad \left\lvert \grd K(x,0)\right\rvert \leq \mathfrak p(x),$$
			then Assumption \ref{assumptions:Lagrangian} holds: square integrability follows from \Cref{rmk:assumptionsEulerLagrange} and 
			$$\left\lvert\left.\grd \frac {\delta \mathcal J}{\delta \rho}\right\rvert_{\nu}(x) - \left.\grd \frac {\delta \mathcal J}{\delta \rho}\right\rvert_{\nu'}(x)\right\rvert\leq L\int \left\lvert y-y'\right\rvert  \d \pi(y,y')$$
			for any coupling $\pi$ of $\nu,\nu'$, so in particular for the optimal one.
			
			For Assumption \ref{assumptions:Eulerian}, it is sufficient to have $$\| K(\cdot,y) -  K(\cdot,y')\|_{C^{2,\theta}_\mathfrak{p}}\leq L \left\lvert y-y'\right\rvert  \qquad \text{ and }\qquad  K(\cdot, 0)\in C^{2,\theta}_{\mathfrak p}. $$ 
			
			A special case is the smoothed $L^2$ objective corresponding to a smooth mollifier $\eta_\varepsilon$ 
			\[
			\mathcal J_\varepsilon(\mu)
			=
			\frac12
			\|\eta_\varepsilon*\mu\|_{L^2(\mathbb R^d)}^2,
			\]
			for which
			$
			K(x,y) = \int \eta_\eps(z-x) \eta_\eps(z-y)\d z$. When  the mollifier is constructed from a smooth  function $\phi$ with bounded derivatives via  $\eta_\eps(x) = \eps^{-d}\phi\left(\frac x \eps \right)$, the above assumptions both hold with $\mathfrak p = 1$.
			
		\end{enumerate}
	\end{example}

	We now state our result.
	\begin{theorem}\label{thm:cont_correspondence}
		Let $\rho$ and $\mu$ solve~\eqref{eq:FP}-\eqref{eq:adjoint}, and let $(X_t,Y_t)$ solve~\eqref{eq:process}-\eqref{eq:process_adjoint}, under assumptions~\ref{assumptions:a},\ref{assumptions:rho0}, \ref{assumptions:Eulerian}, \ref{assumptions:Lagrangian}. Then
		\begin{equation}\label{eq:adjoint_connection}
			\mathbb E(Y_t\,\rvert \,X_t=x)
			=
			\nabla\mu(t,x) \qquad \text{$\rho_t$-almost everywhere (a.e.)}.
		\end{equation}
		Moreover, if the continuum gradient is defined by
		\[
		\mathcal G(x)
		=
		\frac{\delta\mathcal J}{\delta a}(x),
		\]
		then it admits two equivalent representations,
		\begin{equation}\label{eq:Gradient_two_perspectives}
			\mathcal G(x)
			=
			\int_0^T
			\nabla\mu_t(x) \rho_t(x)\d t
			=
			\mathbb E
			\left[
			\int_0^T
			Y_t \, \delta_{x-X_t}\d t
			\right].
		\end{equation}
	\end{theorem}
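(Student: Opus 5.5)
The plan is to prove the conditional-expectation identity \eqref{eq:adjoint_connection} first, via the stochastic flow. Then obtain \eqref{eq:Gradient_two_perspectives} by combining it with the Eulerian gradient formula \eqref{eq:gradient}, which we take as the definition-level computation of $\frac{\delta\mathcal J}{\delta a}$ from the forward--adjoint system.

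\emph{Step 1 (flow representation of $\mu$).} Write $X^{t,x}_s$ for the solution of \eqref{eq:process} started at $X_t=x$, with $s\in[t,T]$, and set $\phi:=\frac{\delta\mathcal J}{\delta\rho_T}(\rho_T)$, with $\rho_T$ fixed by the forward problem. Under \ref{assumptions:a} and \ref{assumptions:Eulerian}, the backward Kolmogorov equation \eqref{eq:adjoint} has a unique classical solution of polynomial growth in the Schauder setting of \cite{lorenzi2000optimal}. By Itô's formula applied to $\mu(s,X^{t,x}_s)$, with the martingale term controlled by the polynomial growth of $\mu$ and the moment bounds of $X$, this solution satisfies the Feynman--Kac representation $\mu(t,x)=\mathbb E[\phi(X^{t,x}_T)]$.

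\emph{Step 2 (differentiate under the expectation).} Because the noise is additive, the Jacobian $J_{t,s}:=\nabla_x X^{t,x}_s$ solves the random linear ODE $\partial_s J_{t,s}=\nabla a(X^{t,x}_s)J_{t,s}$ with $J_{t,t}=I$. It is therefore bounded by $e^{\|\nabla a\|_\infty (s-t)}$ almost surely. Together with the polynomial growth of $\nabla\phi$, this justifies differentiating under $\mathbb E$, giving $\nabla\mu(t,x)=\mathbb E[J_{t,T}^\top\nabla\phi(X^{t,x}_T)]$.

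On the other hand, $Y$ in \eqref{eq:process_adjoint} solves the adjoint linear ODE of $J$. Indeed, $\frac{d}{ds}(J_{t,s}^\top Y_s)=0$, so $Y_t=J_{t,T}^\top Y_T=J_{t,T}^\top\nabla\phi(X_T)$, where $J_{t,T}$ is now evaluated along the path through $X_t$.

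\emph{Step 3 (conditioning).} By the Markov property, and because $W_s-W_t$ for $s\ge t$ is independent of $\mathcal F_t$, the pair $(X_T,J_{t,T})$ given $X_t=x$ has the law of $(X^{t,x}_T,\nabla_xX^{t,x}_T)$. Hence $\mathbb E(Y_t\mid X_t=x)=\nabla\mu(t,x)$ for $\rho_t$-a.e. $x$; the exception set arises only from the choice of regular conditional distribution.

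This step is where I expect the main technical care to be needed. Conditioning requires the flow to be a measurable function of $(x,\text{increments})$, so that one can write $X_T=\Phi_{t,T}(X_t,W_{\cdot}-W_t)$ with $\Phi$ jointly measurable and differentiable in $x$; the freezing lemma then applies. I would invoke Kunita's stochastic flow theory \cite{Kunita1990} for the $C^1$ dependence on $x$. Integrability of $Y_t$ follows from $|Y_t|\le e^{\|\nabla a\|_\infty T}|\nabla\phi(X_T)|$ together with \ref{assumptions:Lagrangian}.

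\emph{Step 4 (gradient).} For a test function $\varphi\in C_c^\infty$, apply Fubini and the tower property:
\[
\mathbb E\int_0^T Y_t\varphi(X_t)\d t=\int_0^T\mathbb E[\mathbb E(Y_t\mid X_t)\varphi(X_t)]\d t=\int_0^T\!\!\int\nabla\mu_t\,\varphi\,\rho_t\d x\d t .
\]
This shows that the Lagrangian measure equals the Eulerian one in the distributional sense. The Eulerian one coincides with $\mathcal G=\frac{\delta\mathcal J}{\delta a}$ by the standard Lagrangian-multiplier computation behind \eqref{eq:gradient}. To check that computation, perturb $a\to a+\eps b$ and differentiate $\mathcal J(\rho_T)$. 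Then $\dot\rho$ solves $\partial_t\dot\rho+\nabla\cdot(a\dot\rho)-\Delta\dot\rho=-\nabla\cdot(b\rho)$ with $\dot\rho_0=0$. Pairing with $\mu$ and integrating by parts gives $\int\mu_T\dot\rho_T=\int_0^T\!\int b\cdot\nabla\mu_t\,\rho_t$. The polynomial growth of $\mu$ and the moments of $\rho$ from \ref{assumptions:rho0} justify dropping the boundary terms.
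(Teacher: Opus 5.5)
Your proposal is correct. For the identity \eqref{eq:adjoint_connection} you use the same mechanism as the paper: Feynman--Kac for $\mu$, the Jacobian $J_{t,s}$ of the flow, conservation of the pairing $J_{t,s}^\top Y_s$, and differentiation under the expectation. You carry it out more carefully than the paper does. You work with the flow $X^{t,x}$ from a deterministic starting point, use the almost-sure bound on $J$ for dominated convergence, and condition via the freezing lemma. The paper instead differentiates a conditional expectation in $x$ and cites Kunita for the chain rule.

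The genuine difference is in how you obtain \eqref{eq:Gradient_two_perspectives}. The paper derives the two representations independently, each from its own perturbation argument:
\begin{itemize}
\item the Eulerian one from the linearized Fokker--Planck equation paired with $\mu$;
\item the Lagrangian one by differentiating trajectories to get $\delta\tilde X$, writing $\delta\tilde\rho_T=\E[\delta_{X_T+\delta\tilde X_T}-\delta_{X_T}]$, and using the duality $Y_T\cdot\delta\tilde X_T=\int_0^T Y_t\cdot\delta\tilde a(X_t)\d t$.
\end{itemize}
Only afterwards does the paper prove the conditional identity, as the explanation of why the two agree.

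You derive only the Eulerian representation by perturbation. You then get the Lagrangian one as a corollary of \eqref{eq:adjoint_connection}, via Fubini and the tower property. This is more economical and more rigorous, since it avoids the formal step of differentiating the law of $X_T$ along pathwise perturbations. The cost is that your Lagrangian formula depends logically on the backward Kolmogorov adjoint and its Feynman--Kac representation, and hence on the regularity in \ref{assumptions:Eulerian}.

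The paper's pathwise derivation never invokes the Kolmogorov equation. It shows directly that $Y$ is exactly what one obtains by differentiating trajectories, which is the continuum analogue of the DTO particle adjoint \eqref{eq:adjoint_particle}. That is the conceptual point the paper wants to make about the two differentiation paradigms. In the paper's approach the conditional identity reconciles two independently obtained formulas; in yours it is the engine that produces one of them.
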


	\paragraph{Discussion.}
	At first sight, Theorem~\ref{thm:cont_correspondence} appears to suggest that the Eulerian and Lagrangian formulations are completely interchangeable, as reflected in the two equivalent gradient representations in~\eqref{eq:Gradient_two_perspectives}. It is worth emphasizing, however, that although the evolution of the pathwise adjoint $Y_t$ depends only on the individual sample path, its terminal condition is prescribed by the entire law $\rho_T=\mathrm{Law}(X_T)$. So it ``sees" information from other paths, and cannot be computed independently without  knowledge of full distribution $\rho_T$. Consequently, the stochastic Lagrangian formulation should not be viewed as a particle approximation, but rather as an alternative representation of the continuum optimization problem. This distinction is precisely what makes the continuum correspondence in Theorem~\ref{thm:cont_correspondence} possible. This relation breaks down after discretization. While $Y_{i,t}$ satisfies the same equation, its final condition is prescribed by $\rho_T^N$, an empirical measure, rather than the entire law $\rho_T$. Hence, the equivalence is no longer preserved.

	\begin{proof}[Proof of Theorem~\ref{thm:cont_correspondence}]
		We first derive the two representations of the continuum gradient. For the Eulerian formulation, perturb the drift by
		\[
		a\longrightarrow a+\delta\tilde a.
		\]
		The linearized Fokker--Planck equation becomes
		\[
		\partial_t\delta\tilde\rho_t
		+
		\nabla\cdot(a\delta\tilde\rho_t)
		+
		\nabla\cdot(\delta\tilde a\,\rho_t)
		=
		\Delta\delta\tilde\rho_t,
		\qquad
		\delta\tilde\rho_0=0.
		\]
		Multiplying this equation by $\mu$, multiplying~\eqref{eq:adjoint} by $\delta\tilde\rho$, integrating by parts, and retaining only the first-order variation yields
		\[
		\delta\mathcal J
		=
		\int
		\delta\tilde a(x)
		\left(
		\int_0^T
		\nabla\mu_t(x) \rho_t(x)\d t
		\right)
		dx,
		\]
		which gives the first representation in~\eqref{eq:Gradient_two_perspectives}.
		
		For the Lagrangian formulation, differentiating the stochastic dynamics gives
		\begin{equation}\label{eq:process_perturbation}
			d\,\delta\tilde X_t
			=
			\nabla a(X_t)\delta\tilde X_t\d t
			+
			\delta\tilde a(X_t)\d t,
			\qquad
			\delta\tilde X_0=0.
		\end{equation}
		The first variation of the terminal objective can be written as
		\[
		\delta\mathcal J=\left\langle
		\frac{\delta\mathcal J}{\delta\rho}(\rho_T),
		\delta\tilde\rho_T
		\right\rangle \,.
		\]
		Since $\rho_T=\mathrm{Law}(X_T)$, it admits the stochastic representation $\rho_T=\mathbb E[\delta_{X_T}]$. Under the perturbation $X_T\mapsto X_T+\delta\tilde X_T$, we therefore have $\delta\tilde\rho_T=\mathbb E\left[\delta_{X_T+\delta\tilde X_T}-\delta_{X_T}\right]$. Consequently, to the leading order in $\delta\tilde X_T$,
		\[
		\begin{aligned}
			\delta\mathcal J=\mathbb E
			\left[
			\left\langle
			\frac{\delta\mathcal J}{\delta\rho}(\rho_T),
			\delta_{X_T+\delta\tilde X_T}
			-
			\delta_{X_T}
			\right\rangle
			\right]\approx
			\mathbb E
			\left[
			\nabla
			\frac{\delta\mathcal J}{\delta\rho}(\rho_T)(X_T)
			\cdot
			\delta\tilde X_T
			\right].
		\end{aligned}
		\]
		Realizing the terminal condition for $Y_T$,
		\[
		\delta\mathcal J
		=
		\mathbb E
		\left[
		Y_T\cdot\delta\tilde X_T
		\right].
		\]
		Combining~\eqref{eq:process_adjoint} and~\eqref{eq:process_perturbation} yields
		\[
		Y_T\cdot\delta\tilde X_T
		=
		\int_0^T
		Y_t\cdot\delta\tilde a(X_t)\d t.
		\]
		Therefore,
		\[
		\begin{aligned}
			\delta\mathcal J
			&=
			\mathbb E
			\left[
			\int_0^T
			Y_t\cdot\delta\tilde a(X_t)\d t
			\right] \\
			&=
			\int
			\delta\tilde a(x)\cdot
			\mathbb E
			\left[
			\int_0^T
			Y_t\delta_{x-X_t}\d t
			\right]
			dx.
		\end{aligned}
		\]
		This proves the Lagrangian representation in~\eqref{eq:Gradient_two_perspectives}.
		
		It remains to establish~\eqref{eq:adjoint_connection}. 
		Let
		\[
		J_{t,s}
		:=
		\frac{\partial X_s^{t,x}}{\partial x}
		\]
		denote the Jacobian of the stochastic flow $(X^{t,x}_s)_{s\geq t}$ associated with the dynamics \eqref{eq:process} and initialized at $X^{t,x}_t=x$. Then
		\[
		\frac{d}{ds}J_{t,s}
		=
		\nabla a(X_s)J_{t,s},
		\qquad
		J_{t,t}=I.
		\]
		The key observation is that the forward Jacobian and the pathwise adjoint preserve their natural dual pairing:
		\[
		\frac{d}{ds}
		\left(
		Y_sJ_{t,s}
		\right)
		=
		0.
		\]
		Consequently,
		\[
		Y_t
		=
		Y_TJ_{t,T}
		=
		\nabla
		\frac{\delta\mathcal J}{\delta\rho}(\rho_T)(X_T)
		\,
		J_{t,T}.
		\]
		On the other hand, the Feynman--Kac representation \cite[Thm 5.7.6]{karatzas1998methods} gives 
		\[
		\mu(t,x)
		=
		\mathbb E
		\left[
		\left.
		\frac{\delta\mathcal J}{\delta\rho}(\rho_T)(X_T)
		\,\right\rvert\,
		X_t=x
		\right].
		\]
		Differentiating with respect to $x$ and applying the chain rule, see \cite{Kunita1990}, gives
		\[
		\nabla\mu(t,x)
		=
		\mathbb E
		\left[
		\left.
		\nabla
		\frac{\delta\mathcal J}{\delta\rho}(\rho_T)(X_T)
		J_{t,T}
		\,\right\rvert\,
		X_t=x
		\right]
		=
		\mathbb E(Y_t\,\rvert \,X_t=x),\qquad \text{$\rho_t$-a.e.}
		\]
		which completes the proof.
	\end{proof}
	
	\section{Eulerian Formulation}\label{sec:Eulerian}
	
	Having established the continuum correspondence between the Eulerian and Lagrangian formulations, we now turn to their numerical realizations. In this section, we consider the optimize-then-discretize (OTD) approach: we first derive the optimality system at the continuum level and then discretize the forward Fokker--Planck equation through a particle approximation. This yields the Eulerian approximation of the gradient 
	\begin{equation}\label{eq:Eulerian_gradient}
		\GmaN(x) := \int_0^T \nabla \mu_t^N(x)\,\rho_t^N(x) \d t, 
	\end{equation}
	where $\rho_t^N$ is the particle approximation of the solution $\rho$ of the Fokker--Planck equation~\eqref{eq:FP} by the empirical measure of the particles $X^i$ from \eqref{eq:particle}. The adjoint $\mu^N$ approximates the continuum $\mu$ in the sense that both satisfy the backward Kolmogorov equation~\eqref{eq:adjoint}, but their terminal conditions are determined by different terminal measures,
	\begin{equation}\label{eq:adjoint:finalData}
		\mu(T,\cdot)
		=
		\left.
		\frac{\delta\mathcal J}{\delta\rho}
		\right\rvert_{\rho_T},
		\qquad\text{and} \qquad 
		\mu^N(T,\cdot)
		=
		\left.
		\frac{\delta\mathcal J}{\delta\rho}
		\right\rvert_{\rho^N_{T}}.
	\end{equation}
	Our objective is to quantify the error
	\[
	e^{\mathrm E}_N=\G-\GmaN.
	\]
	Since $\rho^N$ is an empirical measure, the quantity $\GmaN$ is only a vector-valued measure rather than an absolutely continuous function.
	Consequently, convergence cannot be expected in a strong topology and must instead be understood in the weak sense. Throughout this section, we therefore evaluate the gradient error by testing against $\phi\in C_c^1(\mathbb R^d;\mathbb R^d)$.

	In Section~\ref{subsec:gman_main}, we state  the main convergence theorem. The proof relies on viewing the continuum gradient as a nonlinear operator acting on the forward trajectory. The result can be extended to the case of a fully discrete forward computation, where both Riemann sum integral error and Euler--Maruyama discretization error are taken into account. This extension is collected in Section~\ref{subsec:gman_EM}.

	\subsection{Main Theorem}\label{subsec:gman_main}

	\begin{theorem}\label{thm:GmaN}
		Under Assumption~\ref{assumptions:a}--\ref{assumptions:Eulerian}, let  $\rho^N_t$ be the empirical particle distribution  of the particles in \eqref{eq:particle}. Then the Eulerian approximation of the gradient defined in~\eqref{eq:Eulerian_gradient} converges weakly to the continuum gradient given in~\eqref{eq:Gradient_two_perspectives}. More precisely, for every $\phi\in C_c^1(\mathbb R^d;\mathbb R^d)$, there exists a constant $C>0$ independent of $N$, such that the gradient error is of the same order as the sampling error \eqref{eq:alphaN}: 
		\begin{equation}\label{eq:convergence_GmaN}
			\mathbb E
			\left\lvert 
			\langle
			\mathcal G-\mathcal G_N^{\mathrm E},
			\phi
			\rangle_x
			\right\rvert 
			=\mathbb E
			\left\lvert 
			\langle
			e_N^{\mathrm E},
			\phi
			\rangle_x
			\right\rvert \leq C\alpha(N).
		\end{equation}
	\end{theorem}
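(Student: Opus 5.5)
We split the weak error into an adjoint-error term and a forward sampling term. Writing $\mu-\mu^N$ and $\rho-\rho^N$, we decompose
\[
\langle e_N^{\mathrm E},\phi\rangle_x
=\int_0^T\!\!\int \phi\cdot\nabla(\mu_t-\mu_t^N)\,\d\rho_t^N\d t
+\int_0^T\!\!\int \phi\cdot\nabla\mu_t\,\d(\rho_t-\rho_t^N)\d t
=:I_1+I_2 .
\]
For $I_1$, we use that $\mu-\mu^N$ solves the backward Kolmogorov equation~\eqref{eq:adjoint}, which is linear, with terminal datum $\frac{\delta\mathcal J}{\delta\rho}|_{\rho_T}-\frac{\delta\mathcal J}{\delta\rho}|_{\rho_T^N}$. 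By the Schauder-type estimates in weighted Hölder spaces from \cite{lorenzi2000optimal}, summarized in Appendix~\Cref{sec:gman_helper}, we aim to show
\[
\sup_t\|\mu_t-\mu^N_t\|_{C^{2,\theta}_\mathfrak p}\le C\,\|\tfrac{\delta\mathcal J}{\delta\rho}|_{\rho_T}-\tfrac{\delta\mathcal J}{\delta\rho}|_{\rho^N_T}\|_{C^{2,\theta}_\mathfrak p}\le C C_J W_1(\rho_T,\rho_T^N),
\]
where the second inequality is~\ref{assumptions:Eulerian}. Since $\phi$ has compact support, $\mathfrak p$ is bounded on $\operatorname{supp}\phi$. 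This gives $|I_1|\le C\|\phi\|_\infty W_1(\rho_T,\rho_T^N)$, because $\rho^N_t$ is a probability measure.

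For $I_2$, the test function $\psi_t:=\phi\cdot\nabla\mu_t$ is Lipschitz, uniformly in $t$. Indeed, $\phi\in C^1_c$ and $\nabla\mu_t,\nabla^2\mu_t$ are bounded on the compact support, using $\mu_t\in C^{2,\theta}_\mathfrak p$ uniformly in $t$, which again comes from the parabolic a-priori estimate. Kantorovich--Rubinstein duality then gives
\[
|I_2|\le \int_0^T\operatorname{Lip}(\psi_t)\,W_1(\rho_t,\rho^N_t)\d t\le C\sup_t W_1(\rho_t,\rho_t^N).
\]

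Taking expectations, it remains to bound $\mathbb E\,W_1(\rho_t,\rho_t^N)\le C\alpha(N)$ uniformly in $t\in[0,T]$. The particles in~\eqref{eq:particle} are i.i.d.\ with law $\rho_t$, assuming i.i.d.\ initial samples from $\rho_0$. The rates of \cite{fournier2015rate} apply once $\rho_t$ has a $q$-th moment bounded uniformly in $t$. This moment bound follows from~\ref{assumptions:rho0} and the boundedness of $a$ in~\ref{assumptions:a}, via Itô's formula and Gronwall's inequality. Since the bound is uniform in $t$, the $t$-integral in $I_2$ poses no difficulty.

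The main obstacle is the weighted Schauder stability estimate in $I_1$: bounding $\nabla(\mu-\mu^N)$ uniformly on $[0,T]$ by the $C^{2,\theta}_\mathfrak p$ norm of the terminal data, for polynomially growing data. My first approach is to invoke the optimal Schauder estimates of \cite{lorenzi2000optimal} for Ornstein--Uhlenbeck-type operators with bounded $C^{1,\theta}$ drift. The fallback is Feynman--Kac: write $\nabla\mu(t,x)=\mathbb E[\nabla g(X_T^{t,x})J_{t,T}]$ as in the proof of Theorem~\ref{thm:cont_correspondence}, with $g$ the terminal datum. Then $|J_{t,T}|\le e^{T\|\nabla a\|_\infty}$ and polynomial moments of $X^{t,x}_T$ yield the bound directly, and only this first-derivative bound is needed for $I_1$.
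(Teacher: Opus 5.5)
Your proposal is correct and is essentially the paper's proof. Your $I_1$ and $I_2$ are exactly the terms $A_2$ and $A_1$ in the proof of Proposition~\ref{prop:grad:continuity} with $\rho'=\rho^N$, bounded the same way (parabolic stability from Proposition~\ref{prop:adj:Existence+GrdBounds} plus \ref{assumptions:Eulerian} for the adjoint term, Kantorovich--Rubinstein duality for the forward term), and then combined with the sampling rate of \cite{fournier2015rate}.

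One small fix: keep $\int_0^T \operatorname{Lip}(\psi_t)\,W_1(\rho_t,\rho_t^N)\d t$ instead of passing to $\sup_t W_1(\rho_t,\rho_t^N)$ before taking expectations, since the latter would require a bound on $\mathbb E\sup_t W_1$. Instead, apply Tonelli with your uniform-in-$t$ bound on $\mathbb E\, W_1(\rho_t,\rho_t^N)$, a step the paper leaves implicit because Theorem~\ref{thm:samplingRho} is stated only at $t=T$.
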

	
	The proof of Theorem~\ref{thm:GmaN} relies on a simple but important observation. The continuum gradient naturally defines a nonlinear operator 
	\[
	\Gamma:
	C([0,T];\mathcal P_q(\mathbb R^d))
	\longrightarrow
	\mathcal M(\mathbb R^d),
	\]
	given by
	\[
	\Gamma[\rho]
	=
	\int_0^T
	\nabla\mu_t \rho_t\d t,
	\]
	where $\mu$ solves the adjoint equation~\eqref{eq:adjoint} with terminal condition $\mu_T=\left. \frac{\delta\mathcal J}{\delta\rho}\right\rvert_{\rho_T}$ and we therefore write $\mu = \mu[\rho]$ when there is no ambiguity. Since the adjoint is uniquely determined by the forward trajectory, the gradient depends only on $\rho$, and we may write
	\[
	\mathcal G=\Gamma[\rho],
	\qquad
	\mathcal G_N^{\mathrm E}
	=
	\Gamma[\rho^N].
	\]
	The convergence theorem therefore reduces to establishing the continuity of the operator $\Gamma$ with respect to perturbations of the forward trajectory.
	Then Theorem~\ref{thm:GmaN} is a corollary of the following proposition.
	
	\begin{proposition}[Stability of the Eulerian gradient operator]
		\label{prop:grad:continuity}
		Under Assumption~\ref{assumptions:a}--\ref{assumptions:Eulerian}, the gradient operator 
		\[
		\Gamma:
		C([0,T];\mathcal P_q(\mathbb R^d))
		\longrightarrow
		\mathcal M(\mathbb R^d)
		\]
		is weakly continuous. More precisely, for every
		$\rho,\rho'\in C([0,T];\mathcal P_q(\mathbb R^d))$
		and every test function $\phi\in C_c^1(\mathbb R^d;\mathbb R^d)$,
		\[
		\left\lvert 
		\left\langle
		\Gamma[\rho]-\Gamma[\rho'],
		\phi
		\right\rangle_x
		\right\rvert 
		\le
		C
		\left(
		\int_0^T
		W_1(\rho_t,\rho_t')\d t
		+
		W_1(\rho_T,\rho_T')
		\right),
		\]
		where the constant $C$ depends only on
		$\|\phi\|_{C^1}$,
		$d$,
		$T$,
		the bounds on $a$,
		and the Lipschitz constant of
		$\frac{\delta\mathcal J}{\delta\rho}$.
	\end{proposition}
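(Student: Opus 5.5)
The plan is to split the difference with the intermediate object $\int_0^T \nabla\mu'_t\,\rho_t\d t$, where $\mu=\mu[\rho]$ and $\mu'=\mu[\rho']$. This gives
\[
\left\langle \Gamma[\rho]-\Gamma[\rho'],\phi\right\rangle_x
=\underbrace{\int_0^T\!\!\int (\nabla\mu_t-\nabla\mu'_t)\cdot\phi\,\d\rho_t\d t}_{I_1}
+\underbrace{\int_0^T\!\!\int \nabla\mu'_t\cdot\phi\,\d(\rho_t-\rho'_t)\d t}_{I_2}.
\]
The term $I_1$ measures how the adjoint depends on its terminal data. It should produce the $W_1(\rho_T,\rho'_T)$ contribution. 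The term $I_2$ measures sensitivity to the forward trajectory for a fixed test function. It should produce the $\int_0^T W_1(\rho_t,\rho'_t)\d t$ contribution.

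\textbf{Term $I_2$.} By Kantorovich--Rubinstein duality,
\[
\left\lvert\int g_t\,\d(\rho_t-\rho'_t)\right\rvert\le \mathrm{Lip}(g_t)\,W_1(\rho_t,\rho'_t),
\qquad g_t:=\nabla\mu'_t\cdot\phi .
\]
Since $\phi$ has compact support $K$, we have $\mathrm{Lip}(g_t)\le \|\phi\|_{C^1}\,\|\mu'_t\|_{C^2(K')}$ on a neighbourhood $K'$ of $K$. The weighted Schauder estimates for the backward Kolmogorov equation (Lorenzi, collected in \Cref{ssec:apriori:Continuum}) give $\sup_t\|\mu'_t\|_{C^{2,\theta}_\mathfrak p}\le C\|\mu'_T\|_{C^{2,\theta}_\mathfrak p}$, with $C$ depending on $\|a\|_{C^{1,\theta}_b}$ and $T$. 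Because $\mathfrak p$ is bounded on $K'$, this yields a uniform local $C^2$ bound. We still need a bound on $\|\mu'_T\|_{C^{2,\theta}_\mathfrak p}$ that is uniform over $\rho'$. By \ref{assumptions:Eulerian},
\[
\|\tfrac{\delta\mathcal J}{\delta\rho}|_{\rho'_T}\|\le \|\tfrac{\delta\mathcal J}{\delta\rho}|_{\delta_0}\|+C_JW_1(\rho'_T,\delta_0),
\]
so the bound holds with a constant depending on the first moment of $\rho'_T$. In the application to Theorem~\ref{thm:GmaN}, the moments of $\rho^N$ are controlled in expectation. In the statement I would absorb this dependence into $C$.

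\textbf{Term $I_1$.} The difference $w=\mu-\mu'$ solves the same linear backward equation with terminal datum
\[
w_T=\tfrac{\delta\mathcal J}{\delta\rho}|_{\rho_T}-\tfrac{\delta\mathcal J}{\delta\rho}|_{\rho'_T}.
\]
The Schauder estimate together with \ref{assumptions:Eulerian} gives
\[
\sup_t\|w_t\|_{C^{2,\theta}_\mathfrak p}\le C\,C_J\,W_1(\rho_T,\rho'_T).
\]
In particular $|\nabla w_t(x)|\le C\,\mathfrak p(x)\,W_1(\rho_T,\rho'_T)$, so
\[
|I_1|\le C\|\phi\|_\infty\sup_{x\in K}\mathfrak p(x)\,T\,W_1(\rho_T,\rho'_T),
\]
using only that each $\rho_t$ is a probability measure and $\phi$ is compactly supported. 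Adding the bounds on $I_1$ and $I_2$ gives the claimed estimate.

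\textbf{Main obstacle.} I expect the hard step to be the uniform-in-time weighted Schauder estimate for the backward Kolmogorov equation with unbounded, polynomially growing terminal data. It requires that the semigroup generated by $\Delta+a\cdot\nabla$ with $a\in C^{1,\theta}_b$ preserves $C^{2,\theta}_\mathfrak p$ with a bound uniform on $[0,T]$. I would cite Lorenzi's optimal Schauder results for this, or prove it directly from the flow representation $\nabla\mu_t(x)=\mathbb E[\nabla\mu_T(X_T)J_{t,T}]$ together with its second-derivative analogue and the moment bounds on $X_T$. Only local $C^2$ control near $\operatorname{supp}\phi$ is actually needed, so the weight enters only through $\sup_{K'}\mathfrak p$ and through moments of the processes started in $K'$, which remain bounded.
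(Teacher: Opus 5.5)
Your proof is correct and is essentially the paper's argument with the pivot mirrored. The paper inserts $\int_0^T\nabla\mu_t\,\rho'_t\d t$ and you insert $\int_0^T\nabla\mu'_t\,\rho_t\d t$. Both then use Kantorovich--Rubinstein duality for the forward perturbation, and linearity plus the weighted Schauder stability of the backward Kolmogorov equation, combined with \ref{assumptions:Eulerian}, for the adjoint perturbation.

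Your remark that the constant must also depend on the size of $\frac{\delta\mathcal J}{\delta\rho}$ at one terminal measure is right; the paper's proof has the same unstated dependence through $\mu=\mu[\rho]$. When applying the proposition to Theorem~\ref{thm:GmaN}, let the continuum solution play the role of $\rho'$. That way the constant is deterministic, rather than depending on moments of $\rho^N_T$ and forcing you to take the expectation of a product of random quantities.
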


	Error rates in Theorem~\ref{thm:GmaN} for the particle approximation then follow automatically from the Wasserstein approximation to the underlying dynamics in \Cref{thm:samplingRho}; see, for example, \cite {fournier2015rate}.
	
	\begin{proof}[Proof of \Cref{prop:grad:continuity}]
		Let $\mu$ and $\mu'$ denote the associated adjoint solution associated with $\rho$ and $\rho'$ respectively. Then
		\[
		\Gamma[\rho]-\Gamma[\rho']=\underbrace{\int_0^T
			\nabla\mu (\rho-\rho') \d t}_{A_1}+\underbrace{\int_0^T
			\nabla(\mu-\mu')\rho' \d t}_{A_2}.
		\]
		The first term measures the perturbation of the forward equation, while the second term measures the induced perturbation of the adjoint equation.
		
		For the first term,
		\[
		\begin{aligned}
			\left\lvert \langle A_1,\phi\rangle_x\right\rvert &\le\sum_{j=1}^d\int_0^T \left\lvert \int\,\partial_{x_j}\mu\,\phi_j\,(\rho_t-\rho_t') dx\right\rvert  dt.
		\end{aligned}
		\]
		Since Proposition~\ref{prop:adj:Existence+GrdBounds} gives $\mu\in C_{\mathfrak p}^{2}$ (one derivative in time and  two spatial derivatives are continuous), the function $\partial_{x_j}\mu\,\phi_j$ belongs to $C_b^1(\mathbb R^d)$ at any time. Therefore, by the duality formulation for Wasserstein-1:
		\[
		\left\lvert \int\partial_{x_j}\mu\,\phi_j(\rho_t-\rho_t')\,dx\right\rvert \le\operatorname{Lip}(\partial_{x_j}\mu\,\phi_j)W_1(\rho_t,\rho_t'),
		\]
		and hence
		\[
		\left\lvert \langle A_1,\phi\rangle_x\right\rvert \le C\int_0^TW_1(\rho_t,\rho_t')\d t.
		\]
		
		For the second term, let $K\supset \operatorname{supp}\phi$ be compact. Then
		\[
		\begin{aligned}
			\left\lvert \langle A_2,\phi\rangle_x\right\rvert &\le\int_0^T \|\phi\|_\infty\,\|\nabla(\mu-\mu')\|_{{L^\infty(K)}}\d t.
		\end{aligned}
		\]
		Applying Proposition~\ref{prop:adj:Existence+GrdBounds},
		\[
		\|\nabla(\mu-\mu')\|_{{L^\infty(K)}}\le C\|\mu_T-\mu_T'\|_{C^{2,\theta}_\mathfrak{p}},
		\]
		while Assumptions~\ref{assumptions:a}--\ref{assumptions:Eulerian} give
		\[
		\|\mu_T-\mu_T'\|_{C^{2,\theta}_\mathfrak{p}}
		\le
		C
		W_1(\rho_T,\rho_T').
		\]
		Consequently,
		\[
		\left\lvert 
		\langle A_2,\phi\rangle_x
		\right\rvert 
		\le
		C
		W_1(\rho_T,\rho_T').
		\]
		
		Combining the two estimates completes the proof.
	\end{proof}

	\subsection{Extension to discrete-in-time setting}~\label{subsec:gman_EM}
	In a fully discrete implementation, evaluating $\GmaN$ in~\eqref{eq:Eulerian_gradient} also requires the numerical simulation of the particle trajectories $X_{i,t}$ and the approximation of the time integral by quadrature. The preceding analysis extends naturally to these additional sources of error, allowing the trajectory-discretization and quadrature errors to be incorporated into the overall error estimate.
	
	We begin by decomposing the numerical approximation into three layers:
	\begin{align*}
		\G &= \int \nabla \mu \rho  dt\\
		&  \approx\GmaN:=\int \grd \mu^N \rho^N  dt  &&\text{particle approximation, error  } e_N^{\mathrm{E}}\\
		&\approx \overline\GmaN:=\int    \overline{\grd \mu^N} \ \overline{ \rho^N}\d t  &&\text{Riemann integral quadrature, error }  e_{\integral}^{\mathrm{E}}\\
		&\approx \overline\GmaN_{,\tau}:=\int \overline{\grd \mu^{N,\tau}}\ \overline{\rho^{N,\tau}} \d t&&\text{EM discretization, error } e_{\EM}^{\mathrm{E}}
	\end{align*}
	
	The first step produces the the particle approximations error $e_N^{\mathrm{E}}$ that arises from the empirical approximation $\rho^N$ of $\rho$ and the corresponding adjoint solution $\mu^N$  of \eqref{eq:adjoint} with terminal data \eqref{eq:adjoint:finalData}. The error is analyzed in Section~\ref{subsec:gman_main};
	
	The second step gives rise to the numerical integration error $e^{\mathrm{E}}_\integral$. For  Riemann integration in time, any time-continuous function is replaced by step functions over the time intervals $[t_k,t_{k+1})$ with $t_k=k\tau$:
	\[
	\overline {\rho^N_t} := \rho^N_{t_k} \qquad \text{and} \qquad \overline {\mu^N_t} := \mu^N_{t_k} \qquad  \text{for }\quad  t\in [t_k,t_{k+1}).
	\]
	
	The third step gives rise to $e^{\mathrm{E}}_\EM$ and is induced by the Euler--Maruyama scheme that replaces the trajectory $X_{t}$  by
	\begin{equation}\label{eq:EMScheme}
		X_{k+1}^\tau =  X_k^\tau + a(X_k^\tau) \tau  + \sqrt{2\tau } \zeta_k, \qquad X_0 \sim f_0
	\end{equation}
	for i.i.d.~standard normal $\zeta_k\sim N(0,\text{Id})$. The solution of this scheme is well known to converge strongly with order $\tau^{1/2}$ to $X_t$ \cite{kloeden1992stochastic}, and we repeat this result for convenience in \Cref{prop:EM:Rate} in the appendix. The resulting empirical measure is denoted  $\rho^{N,\tau}= \frac 1 N \sum_n \delta_{X^{n,\tau}}$ and  $$\mu^{N,\tau} \text{ solves the adjoint PDE \eqref{eq:adjoint} with final condition  } \mu^{N,\tau}(T,\cdot)
	=
	\left.
	\frac{\delta\mathcal J}{\delta\rho_T}
	\right\rvert_{\rho^{N,\tau}_{T}}.$$
	Their time-discrete counterparts are again denoted by the bar notation
	\[
	\overline {\rho^{N,\tau}_t} := \rho^{N,\tau}_{t_k} \qquad \text{and}\qquad \overline{\mu^N_t} = \mu^N_{t_k} \qquad  \text{for } t\in [t_k,t_{k+1}).
	\]
	This section is dedicated to understanding both
	\[
	e^\rmE_\integral=\mathbb E\left\lvert\langle \GmaN-\overline\GmaN\,,\phi\rangle_x\right\rvert\,,\quad\text{and}\quad e^\rmE_\EM=\mathbb E\left\lvert\langle \overline\GmaN-\overline\GmaN_{,\tau}\,,\phi\rangle_x\right\rvert\,.
	\]
	The two results are summarized in Propositions~\ref{prop:error:int} and \ref{prop:error:EM}. Together, they yield the approximation rate for the time discrete Eulerian approximation in \Cref{thm:Eulerian:timedisc}.

	\begin{theorem}\label{thm:Eulerian:timedisc}
		Under  Assumptions~\ref{assumptions:a}--\ref{assumptions:Eulerian} and \ref{assumptions:Euler:additional},  and with $\tau <1$, for any $\phi\in C_c^1(\mathbb R^d; \mathbb R^d)$, the  error of the time discrete Euler approximation satisfies, for some $N$- and $\tau$-independent constant $C$, 
		$$\mathbb E\left\lvert\langle \mathcal G-\overline\GmaN_{,\tau}\,,\phi\rangle_x\right\rvert\leq  C(\alpha(N)+\tau^{\frac 1 2}).$$
	\end{theorem}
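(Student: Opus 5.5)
The plan is a triangle inequality along the three layers introduced above:
\[
\mathbb E\left\lvert\langle \mathcal G-\overline\GmaN_{,\tau},\phi\rangle_x\right\rvert
\le
\mathbb E\left\lvert\langle e_N^{\mathrm E},\phi\rangle_x\right\rvert
+ e^\rmE_\integral + e^\rmE_\EM .
\]
The first term is bounded by $C\alpha(N)$ by \Cref{thm:GmaN}. It then suffices to prove $e^\rmE_\integral\le C\tau^{1/2}$ and $e^\rmE_\EM\le C(\tau^{1/2}+\alpha(N))$, with constants independent of $N$ and $\tau$. These are the contents of Propositions~\ref{prop:error:int} and \ref{prop:error:EM}, and I would prove them as follows.

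\emph{Quadrature error.} On $[t_k,t_{k+1})$ I split
\[
\grd\mu^N_t\rho^N_t-\grd\mu^N_{t_k}\rho^N_{t_k}
=\grd\mu^N_t(\rho^N_t-\rho^N_{t_k})+(\grd\mu^N_t-\grd\mu^N_{t_k})\rho^N_{t_k},
\]
and test both pieces against $\phi$.

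For the first piece, $\grd\mu^N_t\cdot\phi$ is Lipschitz uniformly in $t$ and $N$ by the Schauder bounds of Proposition~\ref{prop:adj:Existence+GrdBounds}, since $\phi$ is compactly supported. Hence the pairing is at most $C\,\frac1N\sum_i|X_{i,t}-X_{i,t_k}|$. Its expectation is at most $C(\|a\|_\infty\tau+\sqrt{2\tau})$ by the SDE increment estimate.

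For the second piece, I need time-H\"older regularity of $\grd\mu^N$ on the compact set $K\supset\operatorname{supp}\phi$. This is where \ref{assumptions:Euler:additional} enters. With terminal data in $C^{3,\theta}_\mathfrak p$, the adjoint has $\grd\mu^N\in C^{2+\theta}_\mathfrak p$ in space, so $\partial_t\grd\mu^N=-\grd(a\cdot\grd\mu^N+\Delta\mu^N)$ is bounded on $K$ uniformly in $N$. This gives an $O(\tau)$ bound. The uniformity in $N$ holds because the $C^{3,\theta}_\mathfrak p$ norm of $\frac{\delta\mathcal J}{\delta\rho}|_{\rho^N_T}$ is controlled via the Lipschitz bound in $W_1$ together with a moment bound on $\rho^N_T$. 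I expect this to be the main technical obstacle. If the bound is not uniform, I would fall back on the weaker interior estimate giving $\tau^{\theta/2}$, or use a moment-weighted expectation.

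Integrating over $[0,T]$ gives $e^\rmE_\integral\le C\tau^{1/2}$.

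\emph{EM error.} Now both $\overline\GmaN$ and $\overline\GmaN_{,\tau}$ are evaluated at the grid times $t_k$. I decompose exactly as in the proof of Proposition~\ref{prop:grad:continuity}, into a forward-perturbation term $A_1$ and an adjoint-perturbation term $A_2$, with Riemann sums replacing the integrals.

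For $A_1$, the Lipschitz bound on $\grd\mu^N_{t_k}\cdot\phi$ gives
\[
|\langle A_1,\phi\rangle|\le C\,\tau\sum_k\frac1N\sum_i|X_{i,t_k}-X^{\tau}_{i,k}|.
\]
Here I use the synchronous coupling: the EM scheme is driven by the same Brownian increments and the same initial samples as the exact particles. Then strong order $1/2$ (\Cref{prop:EM:Rate}) bounds the expectation by $C\tau^{1/2}$. This avoids comparing with $\rho$ and picking up an extra $\alpha(N)$.

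For $A_2$, Proposition~\ref{prop:adj:Existence+GrdBounds} and \ref{assumptions:Eulerian} give
\[
\|\grd(\mu^N-\mu^{N,\tau})\|_{L^\infty(K)}\le C\,W_1(\rho^N_T,\rho^{N,\tau}_T)\le C\,\frac1N\sum_i|X_{i,T}-X^\tau_{i,K}|,
\]
whose expectation is again $O(\tau^{1/2})$.

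Combining the three bounds yields $C(\alpha(N)+\tau^{1/2})$, using $\tau<1$ to absorb the $O(\tau)$ terms.
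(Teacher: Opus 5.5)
\textbf{Verdict: there is a genuine gap, in your quadrature bound.}

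Your outer decomposition matches the paper: a triangle inequality over the three layers, with \Cref{thm:GmaN} for the particle layer and Propositions~\ref{prop:error:int} and~\ref{prop:error:EM} for the other two. The difference lies inside those propositions. You pivot around the empirical adjoint $\mu^N$ to get $\alpha(N)$-free bounds. The paper instead pivots around the deterministic continuum adjoint $\mu$.

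The failing step is the second quadrature piece $\int(\grd\mu^N_t-\grd\mu^N_{t_k})\phi\d\rho^N_{t_k}$. There you need $\partial_t\grd\mu^N$ bounded on $K$ uniformly in $N$, i.e.\ a quantitative bound on $\|\frac{\delta\mathcal J}{\delta\rho}\big|_{\rho^N_T}\|_{C^{3,\theta}_\mathfrak p}$. The assumptions do not give this. The Lipschitz bound in \ref{assumptions:Eulerian} is in the $C^{2,\theta}_\mathfrak p$ norm only. Assumption \ref{assumptions:Euler:additional} is a qualitative membership statement for each fixed $\nu$, with no norm bound uniform over the random measures $\rho^N_T$. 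So your claimed uniformity ``via the Lipschitz bound in $W_1$'' does not follow. Your fallback rate $\tau^{\theta/2}$ is strictly worse than the $\tau^{1/2}$ the theorem asserts. This is exactly the hidden $N$-dependence the paper warns about after Proposition~\ref{prop:error:EM}.

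\textbf{The paper's fix.} It inserts $\pm\grd\mu$. By \Cref{prop:adj:Existence+GrdBounds} and \ref{assumptions:Eulerian}, $\|\grd\mu^N_t-\grd\mu_t\|_{L^\infty(K)}\le C\,W_1(\rho^N_T,\rho_T)$, which costs $C\alpha(N)$. The time increment then falls on $\grd\mu$, which lies in $C^1([0,T];C^{1,\theta}_\mathfrak p)$ by \ref{assumptions:Euler:additional} with a deterministic constant, giving $O(\tau)$. This is why $\alpha(N)$ appears in the paper's bounds for $e^\rmE_\integral$ and $e^\rmE_\EM$, at no cost to the final rate.

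\textbf{Keeping your route.} If you want your $\alpha(N)$-free route, replace the $C^3$ argument by interpolation. From $\mu^N\in C([0,T];C^{2,\theta}_\mathfrak p)\cap C^1([0,T];C^{\theta}_\mathfrak p)$, a Landau-type inequality on a neighbourhood of $K$ gives $\|\grd\mu^N_t-\grd\mu^N_s\|_{L^\infty(K)}\le C|t-s|^{1/2}\|\mu^N_T\|_{C^{2,\theta}_\mathfrak p}$. Moreover $\E\|\mu^N_T\|_{C^{2,\theta}_\mathfrak p}\le\|\mu_T\|_{C^{2,\theta}_\mathfrak p}+C_J\,\E\,W_1(\rho^N_T,\rho_T)$ is bounded in $N$. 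This route does not even use \ref{assumptions:Euler:additional}.

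\textbf{Random Lipschitz constants.} Relatedly, you use Lipschitz constants of $\grd\mu^N_t\cdot\phi$ in your first quadrature piece and in $A_1$. These are not uniform in $N$; they are random, of size $C(1+W_1(\rho^N_T,\rho_T))$. You must pair them with the $L^2$ strong rates via Cauchy--Schwarz, using a second-moment bound on $W_1(\rho^N_T,\rho_T)$, which \ref{assumptions:rho0} supplies. The paper avoids this by always testing the random measures against the deterministic function $\grd\mu\,\phi$.

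Your $A_2$ bound through $W_1(\rho^N_T,\rho^{N,\tau}_T)$ with the synchronous coupling is correct as stated.
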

	
	\begin{proposition}\label{prop:error:int}
		Under  Assumptions~\ref{assumptions:a}--\ref{assumptions:Eulerian} and \ref{assumptions:Euler:additional},  and with $\tau <1$, for any $\phi\in C_c^1(\mathbb R^d; \mathbb R^d)$, the integral approximation error satisfies, for some $N$- and $\tau$-independent constant $C$, 
		$$e_{\integral}=\mathbb E\left\lvert\langle \GmaN-\overline\GmaN\,,\phi\rangle_x\right\rvert \leq  C(\alpha(N)+\tau^{\frac 1 2}).$$
	\end{proposition}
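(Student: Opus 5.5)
We need to bound $\mathbb E\lvert\langle \GmaN-\overline\GmaN,\phi\rangle_x\rvert$. On each interval $[t_k,t_{k+1})$ we write
\[
\grd\mu^N_t\rho^N_t-\grd\mu^N_{t_k}\rho^N_{t_k}
=\grd\mu^N_t(\rho^N_t-\rho^N_{t_k})+(\grd\mu^N_t-\grd\mu^N_{t_k})\rho^N_{t_k}.
\]
Testing against $\phi$ and integrating in time then splits the error into a forward-in-time term $B_1$ and an adjoint-in-time term $B_2$. The point is to avoid Wasserstein rates in $N$ altogether: we can compare particle positions directly. In fact the bound should come out as $C\tau^{1/2}$, which is stronger than the claimed $C(\alpha(N)+\tau^{1/2})$. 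If a direct particle comparison is not uniform enough, there is a fallback. Insert the continuum $\rho$ via the triangle inequality, $\rho^N_t-\rho^N_{t_k}=(\rho^N_t-\rho_t)+(\rho_t-\rho_{t_k})+(\rho_{t_k}-\rho^N_{t_k})$. Then bound the outer two pieces by $W_1$ and \Cref{thm:samplingRho}, which gives $\alpha(N)$, and the middle piece by $W_1(\rho_t,\rho_{t_k})\le C\tau^{1/2}$ using the moment bounds of the appendix. This fallback is presumably why $\alpha(N)$ appears in the statement.

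\textbf{Term $B_1$.} We have
\[
\langle B_1,\phi\rangle=\frac1N\sum_i\int_0^T\bigl[(\grd\mu^N_t\cdot\phi)(X_{i,t})-(\grd\mu^N_t\cdot\phi)(X_{i,t_k})\bigr]\d t.
\]
By Proposition~\ref{prop:adj:Existence+GrdBounds}, the function $\grd\mu^N_t\cdot\phi$ is Lipschitz, since $\phi$ has compact support and $\mu^N\in C^2_{\mathfrak p}$. Its Lipschitz constant depends on $\|\mu^N_T\|_{C^{2,\theta}_{\mathfrak p}}$. By \ref{assumptions:Eulerian} this is at most $\|\mu_T\|+C_JW_1(\rho_T,\rho^N_T)$, whose expectation is bounded uniformly in $N$. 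Hence $\lvert\langle B_1,\phi\rangle\rvert\le L_N\,\frac1N\sum_i\sup_k\int_{t_k}^{t_{k+1}}\lvert X_{i,t}-X_{i,t_k}\rvert\d t$. The SDE increment satisfies $\mathbb E\lvert X_t-X_{t_k}\rvert^2\le C\tau$ because $a$ is bounded. Cauchy--Schwarz then decouples the random constant $L_N$ from the increments, giving $\mathbb E\lvert\langle B_1,\phi\rangle\rvert\le C\tau^{1/2}$.

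\textbf{Term $B_2$.} We have $\lvert\langle B_2,\phi\rangle\rvert\le\|\phi\|_\infty\int_0^T\|\grd\mu^N_t-\grd\mu^N_{t_k}\|_{L^\infty(K)}\d t$, where $K\supset\operatorname{supp}\phi$ is compact. This step needs time regularity of $\grd\mu^N$, and this is exactly where \ref{assumptions:Euler:additional} enters. With terminal data in $C^{3,\theta}_{\mathfrak p}$, Schauder theory (the appendix estimates of \Cref{sec:gman_helper}) gives $\grd\mu^N\in C^{2}_{\mathfrak p}$ in space. Differentiating the backward Kolmogorov equation~\eqref{eq:adjoint} in $x$ then shows that $\partial_t\grd\mu^N$ is bounded on $K$ by $C\|\mu^N_T\|_{C^{3,\theta}_{\mathfrak p}}$. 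This yields a bound $C\tau$, which is even better than needed. If only the $C^{2,\theta}$ theory is available, I would instead use the parabolic Hölder-in-time estimate: $\grd\mu$ is $\tfrac12$-Hölder in $t$ when $\mu\in C^{2,\theta}$, giving $C\tau^{1/2}$.

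The remaining issue is uniformity of the constant in $N$. By \ref{assumptions:Euler:additional}, $\mu^N_T=\frac{\delta\mathcal J}{\delta\rho}|_{\rho^N_T}$ lies in $C^{3,\theta}_{\mathfrak p}$. Its norm must be controlled uniformly, or at least in expectation, over empirical measures with bounded $q$-th moment. I expect this to be the main obstacle. Either one reads \ref{assumptions:Euler:additional} as a uniform bound, or one combines the $C^{2,\theta}$ Lipschitz bound with the appendix a-priori estimates. Everything else is routine, and summing $B_1$ and $B_2$ completes the proof.
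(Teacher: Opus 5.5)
\textbf{There is a genuine gap in $B_2$, at exactly the point you flag and leave open.} Your $B_1$ estimate is fine, with two small repairs. First, $\sup_k\int_{t_k}^{t_{k+1}}$ should be $\int_0^T$. Second, Cauchy--Schwarz needs $\mathbb E L_N^2$ bounded, not just $\mathbb E L_N$; this holds because $W_1(\rho_T,\rho^N_T)\le\int|x|\,\mathrm{d}\rho_T+\frac1N\sum_i|X_{i,T}|$ has bounded second moment.

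In $B_2$, bounding $\partial_t\nabla\mu^N$ on $K$ by $C\|\mu^N_T\|_{C^{3,\theta}_{\mathfrak p}}$ does not help. (A4) only says $\frac{\delta\mathcal J}{\delta\rho}\big|_\nu\in C^{3,\theta}_{\mathfrak p}$ for each fixed $\nu$. It gives no uniform bound and no Lipschitz dependence on $\nu$ in that norm; (A3$^{\mathrm E}$) provides Lipschitz dependence only in $C^{2,\theta}_{\mathfrak p}$. So $\mathbb E\|\mu^N_T\|_{C^{3,\theta}_{\mathfrak p}}$ is not controlled uniformly in $N$. Reading (A4) as a uniform bound adds a hypothesis, and your $C^{2,\theta}$ fallback is asserted rather than proved: the appendix gives no time-H\"older estimate for $\nabla\mu^N$ with an $N$-controlled constant.

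The paper never needs time regularity of $\mu^N$, because it pivots through the continuum adjoint. On $[t_k,t_{k+1})$ it writes the integrand as
\[
(\nabla\mu^N_t-\nabla\mu_t)\rho^N_t+\nabla\mu_t(\rho^N_t-\rho^N_{t_k})+(\nabla\mu_t-\nabla\mu_{t_k})\rho^N_{t_k}+(\nabla\mu_{t_k}-\nabla\mu^N_{t_k})\rho^N_{t_k}.
\]
The four pieces are handled as follows:
\begin{itemize}
\item The first and last pieces are at most $C\|\mu^N_T-\mu_T\|_{C^{2,\theta}_{\mathfrak p}}\le C\,C_J W_1(\rho^N_T,\rho_T)$. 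This is where $\alpha(N)$ comes from, not from a density pivot as you guessed.
\item The second piece has the deterministic Lipschitz constant of $\nabla\mu_t\cdot\phi$, so it gives $C\tau^{1/2}$ without any Cauchy--Schwarz.
\item The third piece uses (A4) only at the deterministic measure $\rho_T$ and gives $C\tau$.
\end{itemize}

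Alternatively, you can close your own route without (A4):
\begin{itemize}
\item The equation and the appendix stability estimate give, pointwise, $|\mu^N_t-\mu^N_s|\le C\mathfrak p\,|t-s|\,\|\mu^N_T\|_{C^{2,\theta}_{\mathfrak p}}$.
\item Set $g=\mu^N_t-\mu^N_s$, $h=|t-s|^{1/2}\le 1$ and $K'=K+\overline{B_1(0)}$. The Taylor interpolation $|\partial_e g(x)|\le 2h^{-1}\|g\|_{L^\infty(K')}+\tfrac h2\|D^2 g\|_{L^\infty(K')}$ then yields $\|\nabla(\mu^N_t-\mu^N_s)\|_{L^\infty(K)}\le C\|\mu^N_T\|_{C^{2,\theta}_{\mathfrak p}}|t-s|^{1/2}$.
\item The expectation of this constant is uniform in $N$ by (A3$^{\mathrm E}$).
\end{itemize}
Written out, this gives the $N$-free bound $C\tau^{1/2}$ you anticipated, which is stronger than the statement. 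As it stands, however, the proposal does not contain this step.
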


	\begin{proposition}\label{prop:error:EM}
		Under  Assumptions~\ref{assumptions:a}--\ref{assumptions:Eulerian}, there exists an $N$- and $\tau$-independent $C$ such that the Euler--Maruyama error is 
		$$  e_{\EM}=\mathbb E\left\lvert \langle \overline\GmaN-\overline\GmaN_{,\tau}\,,\phi\rangle_x\right\rvert  \leq  C( \alpha(N) + \tau^{ \frac 1 2 }).$$
	\end{proposition}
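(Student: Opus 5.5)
The plan is to prove Proposition~\ref{prop:error:EM} by viewing both $\overline\GmaN$ and $\overline\GmaN_{,\tau}$ as images of a time-discretized version of the operator $\Gamma$, and then repeating the two-term splitting used for Proposition~\ref{prop:grad:continuity}, with the Wasserstein distance between $\rho^N$ and $\rho^{N,\tau}$ controlled pathwise by the strong Euler--Maruyama rate. We couple the two empirical measures synchronously: particle $i$ of $\rho^N$ and particle $i$ of $\rho^{N,\tau}$ share the same initial sample and Brownian increments. Then, for every grid time $t_k$,
\[
W_1(\rho^N_{t_k},\rho^{N,\tau}_{t_k})\le \frac1N\sum_{i=1}^N |X_{i,t_k}-X^{\tau}_{i,k}|,
\qquad
\mathbb E\, W_1(\rho^N_{t_k},\rho^{N,\tau}_{t_k})\le C\tau^{1/2},
\]
where the second bound follows from \Cref{prop:EM:Rate}, uniformly in $k$ and $N$.

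Next we split the error exactly as in the proof of Proposition~\ref{prop:grad:continuity}:
\[
\langle \overline\GmaN-\overline\GmaN_{,\tau},\phi\rangle_x
=\sum_k \tau\Big(\big\langle \nabla\mu^N_{t_k}\,(\rho^N_{t_k}-\rho^{N,\tau}_{t_k}),\phi\big\rangle+\big\langle \nabla(\mu^N_{t_k}-\mu^{N,\tau}_{t_k})\,\rho^{N,\tau}_{t_k},\phi\big\rangle\Big).
\]

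For the first term, Proposition~\ref{prop:adj:Existence+GrdBounds} gives $\mu^N_{t_k}\in C^2_{\mathfrak p}$. Since $\phi$ has compact support, $\partial_{x_j}\mu^N_{t_k}\phi_j$ is Lipschitz with a constant depending on $\|\mu^N_T\|_{C^{2,\theta}_{\mathfrak p}}$ and $\|\phi\|_{C^1}$. The $W_1$ duality then bounds the first term by $C\sum_k\tau\,W_1(\rho^N_{t_k},\rho^{N,\tau}_{t_k})$.

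For the second term, Proposition~\ref{prop:adj:Existence+GrdBounds} again gives
\[
\sup_t\|\nabla(\mu^N_t-\mu^{N,\tau}_t)\|_{L^\infty(K)}\le C\|\mu^N_T-\mu^{N,\tau}_T\|_{C^{2,\theta}_{\mathfrak p}}.
\]
By \ref{assumptions:Eulerian}, the right-hand side is at most $C\,W_1(\rho^N_T,\rho^{N,\tau}_T)$, where $T=t_K$ is a grid point. Since $\rho^{N,\tau}_{t_k}$ is a probability measure, this term is bounded by $T\|\phi\|_\infty C\, W_1(\rho^N_T,\rho^{N,\tau}_T)$. Taking expectations and using the coupling bound gives $e_{\EM}\le C\tau^{1/2}$. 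This is in fact stronger than the claimed $C(\alpha(N)+\tau^{1/2})$.

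The main obstacle is making the constants uniform over the random measure $\rho^N_T$. Two things must be checked. First, the $C^{2,\theta}_{\mathfrak p}$ norm of the terminal data $\frac{\delta\mathcal J}{\delta\rho}\big|_{\rho^N_T}$ must be bounded independently of the realization. Second, the empirical measures must lie in $\mathcal P_q$. For the first point, we would use the Lipschitz bound in \ref{assumptions:Eulerian} relative to the fixed reference $\rho_T$:
\[
\|\mu^N_T\|_{C^{2,\theta}_{\mathfrak p}}\le \|\mu_T\|_{C^{2,\theta}_{\mathfrak p}}+C_J\,W_1(\rho^N_T,\rho_T).
\]
The factor $W_1(\rho^N_T,\rho_T)$ is random, so the first term produces a product of random quantities. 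This product is handled by Cauchy--Schwarz together with second-moment bounds on $W_1(\rho^N_T,\rho_T)$ (finite by \ref{assumptions:rho0} and the moment estimates in the appendix) and on the EM error. If controlling that product directly proves awkward, the fallback is to route the comparison through the continuum quantities $\rho$ and $\mu$ via the triangle inequality. That route is where the extra $\alpha(N)$ term in the statement would come from.
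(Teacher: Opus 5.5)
Your proof is correct. It takes a genuinely different route from the paper's, and it gives a slightly stronger result.

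\textbf{The paper's route.} The paper pivots through the \emph{continuum} adjoint:
\[
\overline{\nabla\mu^N}\,\overline{\rho^N}-\overline{\nabla\mu^{N,\tau}}\,\overline{\rho^{N,\tau}}=(\overline{\nabla\mu^N}-\overline{\nabla\mu})\,\overline{\rho^N}+\overline{\nabla\mu}\,(\overline{\rho^N}-\overline{\rho^{N,\tau}})+(\overline{\nabla\mu}-\overline{\nabla\mu^{N,\tau}})\,\overline{\rho^{N,\tau}}.
\]
Because of this pivot, the Lipschitz constant of $\partial_{x_j}\mu\,\phi_j$ is deterministic, and only first moments of Wasserstein distances are needed. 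The cost is that the two adjoint terms produce $W_1(\rho_T,\rho_T^N)$ and $W_1(\rho_T,\rho_T^{N,\tau})$. This is exactly where the $\alpha(N)$ in the statement comes from; it is your ``fallback''.

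\textbf{Your route.} You compare the two discrete objects directly through $\mu^N$. This makes the Lipschitz constant random, and you control it with \ref{assumptions:Eulerian} relative to $\rho_T$ plus Cauchy--Schwarz. Both ingredients you need are available:
\begin{itemize}
\item Jensen's inequality and the $L^2$ form of \Cref{prop:EM:Rate} give $\mathbb E[W_1(\rho^N_{t_k},\rho^{N,\tau}_{t_k})^2]\le \frac1N\sum_i\mathbb E|X_{i,t_k}-X^\tau_{i,k}|^2\le C_{EM}^2\tau$.
\item The product coupling gives $W_1(\rho_T^N,\rho_T)\le \frac1N\sum_i|X_{i,T}|+\int|y|\,d\rho_T(y)$. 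Hence $\mathbb E[W_1(\rho_T^N,\rho_T)^2]\le 4\int|y|^2\,d\rho_T(y)$, uniformly in $N$.
\end{itemize}

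Two small corrections to how you framed the obstacle. You do not need the terminal norm $\|\mu^N_T\|_{C^{2,\theta}_{\mathfrak p}}$ bounded independently of the realization; only its second moment enters. Membership of the empirical measures in $\mathcal P_q$ is automatic, since they are finitely supported.

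\textbf{What each approach buys.} Your argument yields $e_{\EM}\le C\tau^{1/2}$ with an $N$-independent constant. This shows that the $\alpha(N)$ in this proposition is an artifact of the pivot. It also shows that a direct continuity argument need not hide any $N$-dependence in its constant, which tempers the paper's remark following the statement. The price is the $L^2$ rather than $L^1$ Euler--Maruyama estimate and a second-moment bound on $W_1(\rho_T^N,\rho_T)$. The paper's pivot avoids products of random quantities altogether. The improvement does not change \Cref{thm:Eulerian:timedisc}, since $\alpha(N)$ enters there through $e_N^{\mathrm E}$ anyway.
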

	
	While the $\tau$ dependence is expected, the $N$-dependence of $e_{\mathrm{int}}$ and $e_{\mathrm{EM}}$ may seem counterintuitive from a classical textbook perspective. We should emphasize that such classical continuity approaches hides $N$-dependence in the constant, rendering these estimates less meaningful for our goal to explicitly spell out the rate in $N$ and $\tau$. We thus work with constants that  only depend on the regularity of the continuum quantities $\rho,\mu$, coefficients $a$ and the test function $\phi$.
	
	\begin{proof}[Proof of \Cref{prop:error:int}]
		First note that under Assumptions~\ref{assumptions:a} and~\ref{assumptions:Euler:additional}, $\mu\in C^1([0,T]; C^1_p(\mathbb R^d))$. Let $K\supset \operatorname{supp}\phi$ be compact. Pivoting with $\grd \mu$ shows
		\begin{align*}
			\mathbb E
			\left\lvert 
			\langle
			e_\integral^{\mathrm E},
			\phi
			\rangle_x
			\right\rvert = &\E \left[\left\lvert\left\langle \int \grd \mu^N \rho^N -  \overline{\grd \mu^N} \ \overline{\rho^N} \d t  , \phi(x)\right\rangle_x \right\rvert  \right] \\
			&= \E \left[\left\lvert \int \left( \int  \grd \mu^N\phi(x) \d \rho^N_t -  \int  \overline{\grd \mu^N}\phi(x) \d \overline{\rho}^N_t \right)\d t\right\rvert   \right]
			\\
			&= \E \bigg[\left\lvert\int \left( \int ( \grd \mu^N -\grd \mu ) \phi(x) \d \rho^N_t  +  \int  \grd \mu  \phi(x) \d \left(\rho^N_t -  \overline{\rho}^N_t\right)\right.\right. \\
			& \qquad \quad +  \left.\left. \int  ({\grd \mu}-\overline{\grd \mu})\phi(x) \d \overline{\rho}^N_t \d t  +  \int  (\overline{\grd \mu} -\overline{\grd \mu}^N)\phi(x) \d \overline{\rho}^N_t \right)\d t \right\rvert \bigg]
			\\
			&\leq \E \left[C_{\phi}\int \| \grd \mu^N_t-{\grd \mu}_t\|_{L^\infty(K)} + \| \overline{\grd \mu}^N _t-\overline{\grd \mu}_t\|_{L^\infty(K)}  \d t \right.\\
			&\qquad \quad \left. + C_{\mu\phi} \int W_1(\rho^N_t, \overline\rho^N_t) \d t+ C_{\phi} \int \|  {\grd \mu}_t -\overline{\grd \mu}_t\|_{{L^\infty(K)}}\d t\right]
			\\
			&\overset{\text{Prop.~}\ref{prop:adj:Existence+GrdBounds}}{\leq} \E \left[2 C_{\phi} C_\mu W_1(\rho^N_T, \rho_T)  + C_{\mu\phi} \int W_1(\rho^N_t, \overline\rho^N_t) \d t+ C_{\phi} CT\tau \right]
			\\
			&\overset {\text{Thm.~}\ref{thm:samplingRho},\eqref{eq:rate:SDEHoelder}}{\leq}  C'(\alpha(N)+ \tau^{1/2}+ \tau).
		\end{align*}
		
		Here, we deploy several results in Appendix~\ref{sec:gman_helper}. In the first inequality, we combine the first and fourth terms and bound them by the distance between $\mu^N$ and $\mu$. This term is further bounded using the regularity of $\mu$ according to Proposition~\ref{prop:adj:Existence+GrdBounds}. The second term is translated to the Wasserstein bound by Proposition~\ref{prop:adj:Existence+GrdBounds}, using the fact that $\nabla\mu$ is bounded. Its rate $\tau^{1/2}$ follows from \Cref{cor:rate:time_particles}. The third term is the true discretization-in-time term and is controlled by the temporal regularity of $\mu$ from Proposition~\ref{prop:adj:Existence+GrdBounds}.
	\end{proof}
	
	\begin{proof}[Proof of \Cref{prop:error:EM}]
		Again, pivoting with $\grd \mu$ gives :
		\begin{align*}
			e_{\EM} 
			&=\E\bigg[\left\lvert  \bigg\langle \int (\overline{\grd \mu^N} -\overline{\grd\mu} ) \overline{\rho^N} \d t + \int\overline{\grd\mu} ( \overline{\rho^N}- \overline{\rho^{N,\tau}})\d t\right.
			\\
			&\qquad \quad \left.\left.\left.+\int  (\overline{\grd\mu}- \overline{\grd \mu^{N,\tau}})\overline{\rho^{N,\tau}} \d t, \phi\right\rangle_x\right\rvert \right] 
			\\
			&\leq \E\left [C_\phi \int\|\overline{ \grd \mu}^N_t- \overline{\grd\mu}_t\|_{L^\infty(K)} + \| \overline{\grd\mu}_t- \overline{\grd \mu}^{N,\tau}_t\|_{L^\infty(K)} \d t \right.\\
			& \qquad \quad \left. +  C_{\mu\phi} \int W_1(\overline{\rho}^N_t, \overline{\rho}^{N,\tau}_t)\d t \right]
			\\
			&\overset {\text{Prop. }\ref{prop:adj:Existence+GrdBounds}}\leq  \E\left[ C_\phi C_\mu\left(W_1(\rho_T,\rho^N_T) +W_1(\rho_T,\rho^{N,\tau}_T) \right)+ C_{\mu\phi}\int W_1(\overline\rho^{N}_t,\overline\rho^{N,\tau}_t)\d t\right]\\
			&\overset{\text{Thm.~\ref{thm:samplingRho}}}\leq C_\phi C_\mu(2C \alpha(N) + C_{EM}\tau^{1/2})  + C_{\mu\phi}TC_{EM}\tau^{1/2}.
		\end{align*}
		In the last line, the first term and one part of the second term are bounded by the sampling error in Equation~\eqref{eq:alphaN}. The strong rate of convergence of the Euler--Maruyama scheme \cite{kloeden1992stochastic}  then allows us to control the distance between $\overline\rho^N_t$ and $\overline\rho^{N,\tau}_t$, for every $0\leq t\leq T$:
		\begin{align*} 
			\E[W_1(\overline\rho^{N}_t,\overline\rho^{N,\tau}_t)] &\leq \E\left[\frac 1N \sum_{i=1}^N\left\lvert X_{i,t} - X_{i,t}^\tau\right\rvert\right] \leq C_{EM}\tau^{1/2}.
		\end{align*}
	\end{proof}

	\section{Lagrangian Formulation}\label{sec:Lagrangian}

	We now turn to the discretize-then-optimize (DTO) formulation. We first discretize the forward Fokker--Planck equation~\eqref{eq:FP} by the particle system~\eqref{eq:particle} and then differentiate the resulting finite-dimensional dynamics to obtain the particle adjoints and the corresponding gradient. In complete analogy with the derivation in \Cref{thm:cont_correspondence}, the discretized gradient can be represented as the vector-valued measure 
	\begin{equation}\label{eq:Lagrangial_gradient}
		\GmiN = \frac1N \sum_{i=1}^N \int_0^T Y_{i,t}\,\delta_{x-X_{i,t}}\,\d t,
	\end{equation} 
	where the adjoint particles $Y_i$ satisfy~\eqref{eq:adjoint_particle}, with terminal conditions determined by the empirical measure $\rho_T^N$. 
	
	At first glance, $\GmiN$ resembles a standard Monte Carlo approximation of the continuum representation \[ \G = \mathbb E \left[ \int_0^T Y_t\,\delta_{x-X_t}\,\d t \right], \] with the expectation replaced by an empirical average. Our objective in this section is to quantify the resulting error \[ e_N^{\mathrm L} := \G-\GmiN. \] 
	
	This error is not, however, a classical Monte Carlo error. For each $t$, the particles $X_{i,t}$ are independent samples from the law $\rho_t$ of $X_t$; equivalently, \[ \mathbb E[\delta_{X_{i,t}}]=\rho_t \] in the weak sense. The state approximation can therefore be analyzed using standard Monte Carlo arguments. The approximation of the pathwise adjoint $Y_t$ by $Y_{i,t}$ is more subtle. The continuum adjoint has terminal value \[ Y_T = \nabla \left. \frac{\delta\mathcal J}{\delta\rho} \right\rvert_{\rho_T}(X_T), \] whereas the particle adjoint is initialized using \[ Y_{i,T} = \nabla \left. \frac{\delta\mathcal J}{\delta\rho} \right\rvert_{\rho_T^N}(X_{i,T}). \] 
	
	Although 
	$ \mathbb E[\rho_T^N]=\rho_T$
	as measures, the nonlinear dependence of $\delta\mathcal J/\delta\rho$ on its measure argument generally prevents $Y_{i,t}$ from being an unbiased approximation of $Y_t$. Moreover, even if the two factors were individually unbiased, the product $Y_{i,t}\delta_{x-X_{i,t}}$ would not generally be unbiased because the adjoint and the particle position are correlated. 
	
	Nevertheless, the empirical measure $\rho_T^N$ converges to $\rho_T$, and the particle adjoint depends continuously on its terminal data. One therefore expects $\GmiN$ to converge to $\G$. The central question is quantitative: to what extent, if at all, does this additional dependence on the empirical terminal measure degrade the classical $N^{-1/2}$ Monte Carlo rate? 
	
	In~\Cref{subsec:gmin_main}, we decompose the error into a classical Monte Carlo component and a stability error caused by replacing $\rho_T$ with $\rho_T^N$, and derive a quantitative convergence estimate for $\GmiN$. In~\Cref{subsec:gmin_EM}, we extend the analysis to the fully discrete setting by incorporating time-quadrature and Euler--Maruyama discretization errors.

	\subsection{Main Theorem}\label{subsec:gmin_main}
	As in the Eulerian setting, $\GmiN$ is a vector-valued measure, so the error must be understood in the weak sense and evaluated against a test function $\phi\in C^1_c(\mathbb R^d;\mathbb R^d)$.

	\begin{theorem}\label{thm:GmiN}
		Under Assumptions~\ref{assumptions:a}, \ref{assumptions:rho0},~\ref{assumptions:Lagrangian}, the Lagrangian approximation $\mathcal G^{\mathrm{L}}_N$ from \eqref{eq:Lagrangial_gradient} converges weakly to the continuum gradient $\mathcal G$. More precisely, for every $\phi\in C_c^1(\mathbb R^d;\mathbb R^d)$, there exists a constant $C$ independent of $N$ with
		\begin{equation}\label{eq:convergence_GmiN}
			\mathbb E
			\left\lvert 
			\langle
			\mathcal G-\mathcal G_N^{\mathrm L},
			\phi
			\rangle_x
			\right\rvert  
			\leq 
			C \alpha(N),
		\end{equation}
		where $\alpha(N)$ is defined in~\eqref{eq:alphaN}.
	\end{theorem}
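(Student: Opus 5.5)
Testing against $\phi$ gives
\[
\langle \G-\GmiN,\phi\rangle_x=\mathbb E\Big[\int_0^T Y_t\cdot\phi(X_t)\d t\Big]-\frac1N\sum_{i=1}^N\int_0^T Y_{i,t}\cdot\phi(X_{i,t})\d t .
\]
The plan is to insert an intermediate ``oracle'' particle adjoint $\tilde Y_{i,t}$. It solves the same pathwise ODE \eqref{eq:adjoint_particle} along $X_{i,\cdot}$, but its terminal condition uses the true law: $\tilde Y_{i,T}=\nabla\frac{\delta\mathcal J}{\delta\rho}\big|_{\rho_T}(X_{i,T})$. The pairs $(X_{i,\cdot},\tilde Y_{i,\cdot})$ are then i.i.d.\ copies of $(X_\cdot,Y_\cdot)$ from \eqref{eq:process}--\eqref{eq:process_adjoint}. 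This splits the error into a Monte Carlo term
\[
E_1=\mathbb E\Big[\int_0^T Y_t\cdot\phi(X_t)\d t\Big]-\frac1N\sum_i\int_0^T\tilde Y_{i,t}\cdot\phi(X_{i,t})\d t
\]
and a stability term
\[
E_2=\frac1N\sum_i\int_0^T(\tilde Y_{i,t}-Y_{i,t})\cdot\phi(X_{i,t})\d t .
\]

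For $E_1$, set $Z_i=\int_0^T\tilde Y_{i,t}\cdot\phi(X_{i,t})\d t$; these are i.i.d.\ with mean equal to the first term. By Cauchy--Schwarz,
\[
\mathbb E|E_1|\le N^{-1/2}\,\mathrm{Var}(Z_1)^{1/2},
\]
so it suffices to show $\mathbb E|Z_1|^2<\infty$. Because $\nabla a$ is bounded by \ref{assumptions:a}, Gr\"onwall applied to the linear ODE gives $|\tilde Y_{i,t}|\le e^{\|\nabla a\|_\infty T}|\tilde Y_{i,T}|$. Hence
\[
\mathbb E|Z_1|^2\le C\|\phi\|_\infty^2\,\mathbb E\Big|\nabla\tfrac{\delta\mathcal J}{\delta\rho}\big|_{\rho_T}(X_T)\Big|^2,
\]
and the right-hand side is finite by the $L^2_{\rho_T}$ part of \ref{assumptions:Lagrangian}, using $\rho_T\in\mathcal P_q$ (moment propagation from \ref{assumptions:rho0} and bounded drift, Appendix~\ref{ssec:apriori:particle}). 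This yields $\mathbb E|E_1|\le CN^{-1/2}\le C\alpha(N)$.

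For $E_2$, the difference $\tilde Y_{i,t}-Y_{i,t}$ solves the same linear ODE, so by the same Gr\"onwall bound and the sup-norm Lipschitz part of \ref{assumptions:Lagrangian},
\[
|\tilde Y_{i,t}-Y_{i,t}|\le e^{\|\nabla a\|_\infty T}\Big\|\nabla\tfrac{\delta\mathcal J}{\delta\rho}\big|_{\rho_T}-\nabla\tfrac{\delta\mathcal J}{\delta\rho}\big|_{\rho^N_T}\Big\|_\infty\le C\,C_J\,W_1(\rho_T,\rho_T^N),
\]
uniformly in $i$ and $t$. Therefore
\[
\mathbb E|E_2|\le CT\|\phi\|_\infty\,\mathbb E\,W_1(\rho_T,\rho_T^N)\le C\alpha(N),
\]
by the Fournier--Guillin rate recalled in \Cref{thm:samplingRho}, applied with the $q$-th moment of $\rho_T$. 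This term carries the dimension-dependent rate and dominates $E_1$ for $d\ge 2$.

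The main obstacle is the correlation noted before the theorem: $Y_{i,t}$ depends on all particles through $\rho^N_T$, so the sum in $\GmiN$ is not an i.i.d.\ average. The oracle adjoint is exactly what removes this issue. The one delicate point is that the Lipschitz bound in \ref{assumptions:Lagrangian} must hold in $L^\infty$ uniformly in $x$ even though $\rho^N_T$ is random. Assumption \ref{assumptions:Lagrangian} is stated for all $\nu,\nu'\in\mathcal P_q$, and every empirical measure has all moments, so it applies pathwise. Taking expectations afterward then closes the estimate \eqref{eq:convergence_GmiN}.
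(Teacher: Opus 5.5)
Your proposal is correct and follows the paper's proof essentially step for step. You use the same oracle adjoint $\widetilde Y_i$ with terminal data taken from $\rho_T$, the same split into an i.i.d.\ Monte Carlo term (bounded by the variance via Cauchy--Schwarz) and a stability term, and you control the stability term exactly as the paper does: the Gr\"onwall bound for the linear adjoint ODE, the $L^\infty$-Lipschitz part of \ref{assumptions:Lagrangian}, and the Fournier--Guillin rate.
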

	
	As discussed, one major difficulty in analyzing $e^\rmL_N$ is the bias between $Y_{i,t}$ and $Y_t$. To handle this, we introduce the intermediate approximation
	\begin{equation}
		\widetilde {\mathcal G}_N^{\mathrm L}:= \frac1N\sum_{i=1}^N\int_0^T\widetilde Y_{i,t}\,\delta_{x-X_{i,t}}
		\d t\,,
	\end{equation} 
	with simulated $X_{i,t}$ and a modified adjoint:
	\begin{equation}
		\text{$\widetilde Y_i$ satisfy \eqref{eq:adjoint_particle} with final data } \widetilde Y_{i,T} = 
		\nabla
		\left.
		\frac{\delta\mathcal J}{\delta\rho}
		\right\rvert_{\rho_T}
		(X_{i,T})\,.
		\label{eq:tildeGL}
	\end{equation} 
	Then $(X_i,\tilde Y_i)$ are i.i.d. copies of $(X,Y)$. This  permits a decomposition into the Monte Carlo error and a bias term:
	\begin{align*}
		e^\rmL_N=\mathcal G-\widetilde{\mathcal G}_N^{\mathrm L} + \widetilde{\mathcal G}_N^{\mathrm L}-\mathcal G_N^{\mathrm L}
	\end{align*}
	so that
	\begin{align*}
		\mathbb E
		\left\lvert 
		\langle
		e^\rmL_N,
		\phi
		\rangle_x
		\right\rvert  
		\leq 
		\mathbb E
		\left\lvert 
		\langle
		\mathcal G-\widetilde{\mathcal G}_N^{\mathrm L},
		\phi
		\rangle_x
		\right\rvert   + \mathbb E
		\left\lvert 
		\langle
		\widetilde{\mathcal G}_N^{\mathrm L}-\mathcal G_N^{\mathrm L},
		\phi
		\rangle_x
		\right\rvert   = B_1 + B_2.
	\end{align*}
	The proof then proceeds to analyze these two terms respectively.
	
	Before turning to the proof, we summarize several   properties of $\widetilde{Y}$:
	\begin{lemma}\label{lem:tildeY}
		Under Assumptions~\ref{assumptions:a}, \ref{assumptions:rho0},~\ref{assumptions:Lagrangian}, consider $\widetilde Y_i$ as in \eqref{eq:tildeGL}. 
		\begin{enumerate}[(a)] 
			\item \label{lem:tildeY:bound} For the constant $C_Y$ from \Cref{cor:adjoint:MomentBd+Lipschitz},
			$$\mathbb E\left\lvert\widetilde Y_{i,t}\right\rvert^2\leq C_{Y}.$$
			\item \label{lem:tildeY:samplingrate} There exists a constant $C_{\widetilde Y}$, independent of $N$, such that for all $t\in [0,T]$
			$$\mathbb E\left\lvert\widetilde Y_{i,t}-Y_{i,t}\right\rvert \leq C_Y' C_J \mathbb E[W_1(\rho_T, \rho_T^N)] \leq C_{\widetilde Y} \alpha(N),$$
			with $C_Y'$ from \Cref{prop:adjParticles:Existence+Bounds} .
		\end{enumerate}   
	\end{lemma}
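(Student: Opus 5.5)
Both parts of the lemma rest on one structural fact: the adjoint equation \eqref{eq:adjoint_particle} is \emph{linear} in $Y$ and is driven only by the trajectory $X_{i,\cdot}$. Write $\Phi_i(t,T)$ for the backward propagator, i.e.\ the solution of $\frac{d}{dt}\Phi_i(t,T) = -\nabla a(X_{i,t})^\top \Phi_i(t,T)$ with $\Phi_i(T,T)=I$. Then
\[
\widetilde Y_{i,t}=\Phi_i(t,T)\,\nabla\left.\frac{\delta\mathcal J}{\delta\rho}\right\rvert_{\rho_T}(X_{i,T}),\qquad Y_{i,t}=\Phi_i(t,T)\,\nabla\left.\frac{\delta\mathcal J}{\delta\rho}\right\rvert_{\rho_T^N}(X_{i,T}).
\]
By \ref{assumptions:a}, $\|\nabla a\|_\infty<\infty$, and Grönwall gives the deterministic, pathwise bound $|\Phi_i(t,T)|\le e^{\|\nabla a\|_\infty (T-t)}\le e^{\|\nabla a\|_\infty T}$. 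This estimate presumably underlies the constants $C_Y$ and $C_Y'$ in \Cref{cor:adjoint:MomentBd+Lipschitz} and \Cref{prop:adjParticles:Existence+Bounds}, and I would invoke those results directly.

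\emph{Part (a).} Since $X_{i,\cdot}$ has the law of $X$ from \eqref{eq:process}, $X_{i,T}\sim\rho_T$. Hence
\[
\mathbb E|\widetilde Y_{i,t}|^2\le e^{2\|\nabla a\|_\infty T}\,\Big\|\nabla\left.\tfrac{\delta\mathcal J}{\delta\rho}\right\rvert_{\rho_T}\Big\|_{L^2_{\rho_T}}^2 ,
\]
and the right-hand side is finite by \ref{assumptions:Lagrangian} with $\nu=\rho_T$. We need $\rho_T\in\mathcal P_q$ here, which follows from \ref{assumptions:rho0} and the bounded drift via the standard moment estimate in the appendix. This is exactly the constant $C_Y$ of \Cref{cor:adjoint:MomentBd+Lipschitz}.

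\emph{Part (b).} By linearity, $\widetilde Y_{i,t}-Y_{i,t}=\Phi_i(t,T)\big(\nabla\frac{\delta\mathcal J}{\delta\rho}|_{\rho_T}-\nabla\frac{\delta\mathcal J}{\delta\rho}|_{\rho_T^N}\big)(X_{i,T})$. The sup-norm Lipschitz bound in \ref{assumptions:Lagrangian} holds for \emph{every} pair of measures, so it applies pointwise even though $\rho^N_T$ is random and correlated with $X_{i,T}$. This gives
\[
|\widetilde Y_{i,t}-Y_{i,t}|\le C_Y' C_J\,W_1(\rho_T,\rho_T^N)\quad\text{almost surely}.
\]
The correlation is the apparent obstacle: a naive approach would try to decouple $X_{i,T}$ from $\rho_T^N$. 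The $L^\infty$ form of the assumption makes that unnecessary, because we never need to evaluate the difference in a $\rho_T$-weighted norm. Taking expectations yields the first inequality. The second follows from $\mathbb E\,W_1(\rho_T,\rho_T^N)\le C\alpha(N)$ (\Cref{thm:samplingRho}, i.e.\ \cite{fournier2015rate}), since the $X_{i,T}$ are i.i.d.\ samples from $\rho_T\in\mathcal P_q$ with $q$ as in \ref{assumptions:rho0}.

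The only point needing care is that the empirical-measure rate requires the moment condition on $\rho_T$, not merely on $\rho_0$. Moment propagation for the SDE with bounded drift settles this.
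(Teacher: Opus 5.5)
Your proposal is correct and follows essentially the same route as the paper: part (a) is read off from the second-moment bound of \Cref{cor:adjoint:MomentBd+Lipschitz}, since $(X_i,\widetilde Y_i)$ are i.i.d.\ copies of $(X,Y)$. Part (b) combines the pathwise linear stability of the adjoint ODE (\Cref{prop:adjParticles:Existence+Bounds}) with the uniform-in-$x$ Lipschitz bound of \ref{assumptions:Lagrangian} and the sampling rate of \Cref{thm:samplingRho}; your remark that the $L^\infty$ form of \ref{assumptions:Lagrangian} sidesteps the correlation between $X_{i,T}$ and $\rho_T^N$ is exactly the point the paper's one-line chain of inequalities relies on implicitly.
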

	We defer the proof to Appendix. Note that $C_{\widetilde Y}$  depends on the second moment of $\rho_T$ through \Cref{thm:samplingRho}.
	\begin{proof}[Proof of~\Cref{thm:GmiN}]
		Note that the modified adjoint introduces an i.i.d. sample, thus $(X_i,\widetilde Y_i)\sim (X,Y)$  i.i.d.. Define $Z:= \int_0^T Y_t \phi(X_t) \d t$, then $\widetilde Z_i = \int_0^T\widetilde Y_{i,t}\,\phi(X_{i,t}) \d t$ are also i.i.d. copies of the scalar random variable $Z$. Consequently, the first term can be bounded by standard Monte Carlo arguments:
		\begin{align*}
			B_1= 
			\mathbb E
			\left\lvert 
			\langle
			\mathcal G-\widetilde{\mathcal G}_N^{\mathrm L},
			\phi
			\rangle_x
			\right\rvert   = \mathbb E
			\left\lvert \mathbb E\left[Z\right] -\frac1N\sum_{i=1}^N \widetilde Z_i
			\right\rvert  \leq \left(\frac{\operatorname{Var}(Z)}{N}\right)^{\frac 1 2 } = C_{\phi}C_Y^{\frac1 2} T N^{-\frac 1 2 }.
		\end{align*}
		In the inequality we used Cauchy--Schwarz, and the finiteness of the variance is a consequence of \Cref{cor:adjoint:MomentBd+Lipschitz}
		
		For the second term, we note that $\widetilde{Y}_{i,t}$ and ${Y}_{i,t}$ satisfy the same dynamics but have different terminal conditions. The estimate then comes down to exploiting the stability of this adjoint process. According to \Cref{prop:adjParticles:Existence+Bounds}
		\begin{align*}
			B_2  =  \
			\mathbb E
			\left\lvert 
			\langle
			\widetilde{\mathcal G}_N^{\mathrm L}-\mathcal G_N^{\mathrm L},
			\phi
			\rangle_x
			\right\rvert   
			= & \ \mathbb E
			\left\lvert  \frac1N\sum_{i=1}^N\int_0^T(\widetilde Y_{i,t} - Y_{i,t})\,\phi(X_{i,t})
			\d t
			\right\rvert  
			\\
			\leq &  \ \frac {C_\phi} N\sum_{i=1}^N\int_0^T\mathbb E
			\left\lvert  \widetilde Y_{i,t} - Y_{i,t}
			\right\rvert  \d t \\
			\leq & \ C_\phi C_{\widetilde Y} T \alpha(N) \quad = \quad C \  \alpha(N)
		\end{align*}
		for as $N$-independent constant $C$.
		In the last line, we applied \Cref{lem:tildeY}\ref{lem:tildeY:samplingrate}.
	\end{proof}

	\subsection{Discrete-in-time setting}\label{subsec:gmin_EM}
	As discussed in the Eulerian setting, we can extend the results above to the fully discrete setting. Three layers of approximations are deployed.
	
	\begin{align*}
		\G &= \mathbb E
		\left[
		\int_0^T
		Y_t\delta_{x-X_t}\d t
		\right]\\
		& \approx\GmiN=\frac1N\sum_{i=1}^N\int_0^TY_{i,t}\,\delta_{x-X_{i,t}}
		\d t,&&\text{particle approximation, } e^\rmL_N\\
		&\approx \overline\GmiN:=\frac1N\sum_{i=1}^N\int_0^T\overline{Y_{i,t}}\,\delta_{x-\overline{X_{i,t}}}
		\d t  &&\text{Riemann integral quadrature, } e^\rmL_\integral \\
		&\approx \overline\GmiN_{,\tau}:=\frac1N\sum_{i=1}^N\int_0^T\overline{Y_{i,t}^\tau}\,\delta_{x-\overline{X_{i,t}^\tau}}   \d t  &&\text{EM discretization, } e^\rmL_\EM\,.
	\end{align*}
	The first layer of error $e^\rmL_N$ has already been analyzed in~\Cref{subsec:gmin_main}. We now analyze the integral error and EM discretization error, where we use the notation 
	$$\overline{Y_{i,t}} = Y_{i,t_k},  \quad  \overline{X_{i,t}} = X_{i,t_k}, \qquad \overline{Y_{i,t}^\tau} = Y_{i,k}^\tau,  \quad  \overline{X_{i,t}^\tau} = X_{i,k}^\tau, \quad \text{for} \quad t\in [t_k, t_{k+1}).$$
	Here we let the adjoint ODE \eqref{eq:adjoint_particle} be solved according to the pathwise backward Euler scheme:
	\begin{equation}\label{eq:EulerScheme:Adjoint}
		Y_{i,k}^\tau  =  Y_{i,k+1}^\tau + \tau  Y_{i,k+1}^\tau \grd a(X_{i,k}^\tau)
	\end{equation}
	with final condition
	\begin{equation}\label{eq:EulerScheme:Adjoint:final}
		Y_{i,K}^\tau =  \grd \left.\frac {\delta \mathcal J}{\delta \rho_T}\right\rvert_{\rho_T^{N,\tau}}(X_{i,K}^\tau)\,.
	\end{equation}
	The error in measure argument of the final condition \eqref{eq:EulerScheme:Adjoint:final}, relative to~\eqref{eq:adjoint_particle}, 
	is controlled  by  Assumption~\ref{assumptions:Lagrangian}, whereas the shift in the evaluation point requires additional spatial regularity of $\grd \left.\frac{\delta \mathcal J}{\delta \rho_T}\right\rvert_{\rho_T}$.
	\begin{assumption}\hfill\\
		\begin{enumerate}
			\item[(A5)]\makeatletter\protected@edef\@currentlabel{(A5)}\makeatother
			\label{assumptions:JLipschity} For every $\nu\in \mathcal P_q$, let $\grd \left.\frac{\delta \mathcal J}{\delta \rho_T}\right\rvert_{\nu}$ be Lipschitz continuous: there exists an $L_J = L_J(\nu)$ such that for all $x,y\in \mathbb R^d$ 
			$$\left\lvert \grd \left.\frac{\delta \mathcal J}{\delta \rho_T}\right\rvert_{\nu}(x) - \grd \left.\frac{\delta \mathcal J}{\delta \rho_T}\right\rvert_{\nu}(y)\right\rvert  \leq L_J\left\lvert x-y\right\rvert .$$
		\end{enumerate}
	\end{assumption}
	
	\begin{theorem}\label{thm:Lagrange:timedisc}
		Consider Assumptions~\ref{assumptions:a}, ~\ref{assumptions:rho0},~\ref{assumptions:Lagrangian},~\ref{assumptions:JLipschity}, and with $a\in W^{2,\infty}(\mathbb R^{d}; \mathbb R^d)$. Then, for $\tau \leq 1$ and for any fixed test function $\phi\in C^1_c(\mathbb R^d; \mathbb R^d)$, there exists a  constant $C$ independent of $N,\tau$ such that 
		$$
		\E\left\lvert\langle \mathcal G -  \overline\GmiN_{,\tau}, \phi\rangle_x\right\rvert  \leq  C\left(\alpha(N) + \tau^{1/2}\right).
		$$
	\end{theorem}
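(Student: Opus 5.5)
The plan is to split the error along the three layers already set up, exactly as in the Eulerian case:
\[
\mathcal G-\overline\GmiN_{,\tau}=(\mathcal G-\GmiN)+(\GmiN-\overline\GmiN)+(\overline\GmiN-\overline\GmiN_{,\tau}).
\]
The first term is bounded by $C\alpha(N)$ through \Cref{thm:GmiN}. What remains is to show $e^\rmL_\integral\le C\tau^{1/2}$ and $e^\rmL_\EM\le C(\alpha(N)+\tau^{1/2})$. Both estimates reduce to pathwise bounds, after testing against $\phi$: for each particle one has
\[
\left\lvert\int_0^T A_t\,\phi(B_t)-A'_t\,\phi(B'_t)\d t\right\rvert\le \|\phi\|_\infty\int_0^T\left\lvert A_t-A'_t\right\rvert\d t+\|\grd\phi\|_\infty\int_0^T\left\lvert A'_t\right\rvert\,\left\lvert B_t-B'_t\right\rvert\d t .
\]
Averaging over $i$ and taking expectations, the second integral is handled by Cauchy--Schwarz: $\E|A'||B-B'|\le(\E|A'|^2)^{1/2}(\E|B-B'|^2)^{1/2}$. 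The second moments of the adjoints are uniformly bounded by \Cref{cor:adjoint:MomentBd+Lipschitz} and \Cref{prop:adjParticles:Existence+Bounds}, since $\grd a$ is bounded and Gronwall gives $|Y_{i,t}|\le e^{\|\grd a\|_\infty T}|Y_{i,T}|$.

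For the quadrature error, take $A=Y_{i,t}$, $A'=Y_{i,t_k}$, $B=X_{i,t}$, $B'=X_{i,t_k}$.
\begin{itemize}
\item The adjoint is an ODE, so $|Y_{i,t}-Y_{i,t_k}|\le \tau\|\grd a\|_\infty\sup_s|Y_{i,s}|$, which is $O(\tau)$ in $L^2$.
\item The SDE has the standard H\"older increment bound $\E|X_t-X_{t_k}|^2\le C\tau$ from \eqref{eq:rate:SDEHoelder}.
\end{itemize}
Together these give $C\tau^{1/2}$.

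For the Euler--Maruyama error, take $A=Y_{i,t_k}$, $A'=Y^\tau_{i,k}$, $B=X_{i,t_k}$, $B'=X^\tau_{i,k}$, so the positional part is $O(\tau^{1/2})$ by the strong rate in \Cref{prop:EM:Rate}. The main obstacle is the adjoint error $\E|Y_{i,t_k}-Y^\tau_{i,k}|$, which mixes three effects.
\begin{enumerate}
\item[(i)] \emph{Terminal data.} The difference is
\[
\grd\tfrac{\delta\mathcal J}{\delta\rho}\big|_{\rho^N_T}(X_{i,T})-\grd\tfrac{\delta\mathcal J}{\delta\rho}\big|_{\rho^{N,\tau}_T}(X^\tau_{i,K}).
\]
Insert $\grd\tfrac{\delta\mathcal J}{\delta\rho}\big|_{\rho^N_T}(X^\tau_{i,K})$ as a pivot. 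The measure change is bounded by \ref{assumptions:Lagrangian} via $C_JW_1(\rho^N_T,\rho^{N,\tau}_T)\le C_J\frac1N\sum_j|X_{j,T}-X^\tau_{j,K}|$, which is $O(\tau^{1/2})$ in expectation. The point shift is bounded by \ref{assumptions:JLipschity}, $L_J|X_{i,T}-X^\tau_{i,K}|$. Here I would take $L_J$ uniform, or pivot at $\rho_T$ and use \ref{assumptions:Lagrangian} again, which costs an extra $\alpha(N)$ term.
\item[(ii)] \emph{Coefficient mismatch.} The ODE coefficients differ: $\grd a(X_{i,t})$ versus $\grd a(X^\tau_{i,k})$. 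Since $a\in W^{2,\infty}$, $\grd a$ is Lipschitz, so
\[
|\grd a(X_{i,t})-\grd a(X^\tau_{i,k})|\le\|\grd^2a\|_\infty\big(|X_{i,t}-X_{i,t_k}|+|X_{i,t_k}-X^\tau_{i,k}|\big)=O(\tau^{1/2})
\]
in $L^2$. This is exactly why the extra regularity on $a$ is assumed.
\item[(iii)] \emph{Local truncation.} The explicit backward Euler step has local truncation error $O(\tau^2)$ per step, which sums to $O(\tau)$.
\end{enumerate}

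Writing the error recursion $E_k=Y_{i,t_k}-Y^\tau_{i,k}$, I get
\[
|E_k|\le(1+\tau\|\grd a\|_\infty)|E_{k+1}|+\tau\,|Y|\cdot(\text{coefficient error})+O(\tau^2)|Y|.
\]
A discrete Gronwall argument then gives $|E_k|\le e^{C T}\big(|E_K|+\sup|Y|\,\sup_t(\text{coefficient error})+\tau\sup|Y|\big)$. Products such as $|Y|\cdot|X-X^\tau|$ are again handled by Cauchy--Schwarz with the $L^2$ moment bounds. Collecting the three effects yields $\E|E_k|\le C(\alpha(N)+\tau^{1/2})$ uniformly in $k$.

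Summing the three layers proves the theorem, with constants depending only on $T$, $\phi$, $\|a\|_{W^{2,\infty}}$, $C_J$, $L_J$ and the moments of $\rho_0$.
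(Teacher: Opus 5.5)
Your proof is correct. It uses the same three-layer split as the paper, but it handles the quadrature and Euler--Maruyama layers differently.

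\textbf{Comparison with the paper's route.} In \Cref{prop:error:Lagrange:int} and \Cref{prop:error:Lagrange:EM}, the paper pivots through the mean-field adjoints $\widetilde Y_i$, and their Euler counterparts, whose terminal data are $\grd\frac{\delta\mathcal J}{\delta\rho}|_{\rho_T}(X_{i,T})$. These are i.i.d.\ copies of $Y$. Moment bounds therefore come directly from \Cref{lem:tildeY}, and the Euler accuracy recursion in \Cref{lem:EulerScheme:Stab+Acc} starts from zero terminal error. Each pivot costs an $\alpha(N)$ term, which is harmless for the final bound. You work directly with the particle adjoints $Y_{i,t}$ and run a single Gronwall recursion with a nonzero terminal mismatch $E_K$. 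This gives a sharper quadrature estimate, $e^{\rmL}_{\integral}\le C\tau^{1/2}$ with no $\alpha(N)$ term, and a more compact Euler--Maruyama argument.

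\textbf{What your route requires.} You need second-moment bounds, uniform in $N$ and $\tau$, for adjoints whose terminal data depend on the random measures $\rho^N_T$ and $\rho^{N,\tau}_T$. \Cref{cor:adjoint:MomentBd+Lipschitz} does not make this uniformity explicit, and you only assert it. It does hold:
\begin{itemize}
\item By \ref{assumptions:Lagrangian}, $|Y_{i,T}|\le|\grd\frac{\delta\mathcal J}{\delta\rho}|_{\rho_T}(X_{i,T})|+C_J W_1(\rho^N_T,\rho_T)$.
\item Moreover $\E\, W_1(\rho^N_T,\rho_T)^2\le 2\E|X_T|^2+2(\E|X_T|)^2$.
\item For $Y^\tau_{i,K}$ you also need \ref{assumptions:JLipschity} at the fixed measure $\rho_T$ to move the evaluation point from $X^\tau_{i,K}$ to $X_{i,T}$. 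Alternatively, split the product so that the position difference multiplies $Y_{i,t_k}$ instead of $Y^\tau_{i,k}$.
\end{itemize}

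\textbf{Two smaller points.}
\begin{itemize}
\item In step (i), the option ``take $L_J$ uniform'' is an extra hypothesis. \ref{assumptions:JLipschity} only provides $L_J(\nu)$ for each $\nu$, and $\rho^N_T$ is random. The pivot at $\rho_T$ is the one that is actually needed; it is exactly the paper's treatment of $C_4$.
\item Your pathwise Gronwall bound, with $\sup|Y|\cdot\sup_t(\text{coefficient error})$, loses a logarithm. By L\'evy's modulus of continuity, $\E\sup_t|X_{i,t}-X_{i,\overline t}|^2\sim\tau\log(1/\tau)$, so you only get $\tau^{1/2}\sqrt{\log(1/\tau)}$. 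To recover $\tau^{1/2}$, keep the sum over time steps and take expectations term by term with Cauchy--Schwarz. Alternatively, run the recursion on $\E|E_k|$ as the paper does; this is legitimate because the amplification factor $1+\tau\|\grd a\|_\infty$ is deterministic.
\end{itemize}
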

	The  two propositions below split the total error and analyze the terms 
	\[
	e^\rmL_\integral=\mathbb E\left\lvert\langle \GmiN-\overline\GmiN\,,\phi\rangle_x\right\rvert\,,\quad\text{and}\quad e^\rmL_\EM=\mathbb E\left\lvert\langle \overline\GmiN-\overline\GmiN_{,\tau}\,,\phi\rangle_x\right\rvert\,.
	\]
	Together with \Cref{thm:GmiN}, they prove \Cref{thm:Lagrange:timedisc}.
	
	\begin{proposition}\label{prop:error:Lagrange:int}
		Let Assumptions~\ref{assumptions:a}, ~\ref{assumptions:rho0},~\ref{assumptions:Lagrangian} hold. For  $\tau \leq 1$, there exists a constant $C$ independent of $N,\tau$, such that  
		$$
		e^{\mathrm{L}}_{\integral} = \mathbb E\left\lvert\langle \GmiN - \overline{\GmiN}, \phi\rangle_x\right\rvert \leq C\left(\alpha(N) + \tau^{1/2}\right).
		$$
	\end{proposition}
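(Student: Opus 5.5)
Testing against $\phi$ reduces the claim to a pathwise statement. We have
\[
\langle \GmiN-\overline{\GmiN},\phi\rangle_x=\frac1N\sum_{i=1}^N\sum_k\int_{t_k}^{t_{k+1}}\Big(Y_{i,t}\cdot\phi(X_{i,t})-Y_{i,t_k}\cdot\phi(X_{i,t_k})\Big)\d t .
\]
Pivoting, the integrand splits into two pieces:
\[
(Y_{i,t}-Y_{i,t_k})\cdot\phi(X_{i,t})+Y_{i,t_k}\cdot\big(\phi(X_{i,t})-\phi(X_{i,t_k})\big).
\]
Since the particles are exchangeable, $\mathbb E|\cdot|$ of the average is bounded by $\int_0^T$ of the single-particle expectations. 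It therefore suffices to bound $\mathbb E|Y_{i,t}-Y_{i,t_k}|$ and $\mathbb E\big[|Y_{i,t_k}|\,|X_{i,t}-X_{i,t_k}|\big]$ uniformly in $t\in[t_k,t_{k+1})$.

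For the first term I would use the adjoint ODE \eqref{eq:adjoint_particle} directly. Integrating over $[t_k,t]$ gives $|Y_{i,t}-Y_{i,t_k}|\le \tau\|\nabla a\|_\infty\sup_s|Y_{i,s}|$. Gronwall backward from $T$ gives $\sup_s|Y_{i,s}|\le e^{T\|\nabla a\|_\infty}|Y_{i,T}|$. So this term is $O(\tau\,\mathbb E|Y_{i,T}|)$, and I need a moment bound on $Y_{i,T}$ that is uniform in $N$.

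This uniform bound is the one genuinely nontrivial point, because $Y_{i,T}$ depends on the empirical measure $\rho^N_T$. I would write
\[
Y_{i,T}=\widetilde Y_{i,T}+(Y_{i,T}-\widetilde Y_{i,T}),
\]
with $\widetilde Y_i$ from \eqref{eq:tildeGL}. By \Cref{lem:tildeY}\ref{lem:tildeY:bound}, $\mathbb E|\widetilde Y_{i,T}|^2\le C_Y$. By \ref{assumptions:Lagrangian}, $|Y_{i,T}-\widetilde Y_{i,T}|\le C_J W_1(\rho_T^N,\rho_T)$. This is the source of the $\alpha(N)$ term in the statement: the bias contributes $C\tau\,\mathbb E W_1\le C\tau\alpha(N)\le C\alpha(N)$. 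Second moments are also needed below, and $\mathbb E W_1(\rho^N_T,\rho_T)^2$ is bounded uniformly in $N$ by the moment assumption \ref{assumptions:rho0}, since $W_1\le W_2$ and both measures have bounded second moments. So $\mathbb E\sup_s|Y_{i,s}|^2\le C$ independent of $N$.

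For the second term I would use $|\phi(X_{i,t})-\phi(X_{i,t_k})|\le\|\nabla\phi\|_\infty|X_{i,t}-X_{i,t_k}|$, followed by Cauchy--Schwarz:
\[
\mathbb E\big[|Y_{i,t_k}|\,|X_{i,t}-X_{i,t_k}|\big]\le\big(\mathbb E|Y_{i,t_k}|^2\big)^{1/2}\big(\mathbb E|X_{i,t}-X_{i,t_k}|^2\big)^{1/2}.
\]
The second factor is $O(\tau^{1/2})$: from the SDE, $|X_{i,t}-X_{i,t_k}|\le\|a\|_\infty\tau+\sqrt2|W_{i,t}-W_{i,t_k}|$, which is also the Hölder-in-time estimate \eqref{eq:rate:SDEHoelder} from the appendix. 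The first factor is bounded uniformly by the previous step.

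Alternatively, to avoid relying on the second-moment uniform bound, I would split $Y_{i,t_k}$ into the part propagated from $\widetilde Y_{i,T}$ and the part propagated from the difference $Y_{i,T}-\widetilde Y_{i,T}$. The difference part is bounded pathwise by $C\,W_1(\rho^N_T,\rho_T)$ times $|X_{i,t}-X_{i,t_k}|$, or simply by $2\|\phi\|_\infty C W_1$, which gives directly $C\alpha(N)$ via \Cref{thm:samplingRho}. The $\widetilde Y$ part is handled by \Cref{lem:tildeY}\ref{lem:tildeY:bound} and Cauchy--Schwarz as above, giving $C\tau^{1/2}$. Summing the two terms and integrating over $[0,T]$ yields $e^{\mathrm L}_{\integral}\le C(\alpha(N)+\tau^{1/2})$. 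The main obstacle is this bookkeeping: all constants must be uniform in $N$ despite the empirical terminal data. The $\widetilde Y$-splitting handles this cleanly, with only first moments of $W_1$ needed for the difference part.
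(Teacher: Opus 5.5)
Your proof is correct, but your primary route differs from the paper's. The paper never bounds the empirical-terminal adjoint $Y_i$ directly. It inserts the mean-field adjoint $\widetilde Y_i$ from \eqref{eq:tildeGL} at both ends, which produces four terms:
\begin{itemize}
\item two pivot terms, $\mathbb E|Y_{1,t}-\widetilde Y_{1,t}|$ and $\mathbb E|\overline{\widetilde Y_{1,t}}-\overline{Y_{1,t}}|$, charged to \Cref{lem:tildeY}\ref{lem:tildeY:samplingrate}; this is where $\alpha(N)$ enters;
\item a time-increment term for $\widetilde Y$ of order $\tau$;
\item a $\phi$-increment term weighted by $\overline{\widetilde Y}$, of order $\tau^{1/2}$ via Cauchy--Schwarz.
\end{itemize}
That route needs only second moments of $\widetilde Y$ (i.i.d.\ copies of $Y$) and first moments of $W_1$.

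You instead split directly in time and handle the $N$-dependence of $Y_{i,T}$ through an $N$-uniform second-moment bound. You use $|Y_{i,T}-\widetilde Y_{i,T}|\le C_J W_1(\rho^N_T,\rho_T)$ together with $\mathbb E W_1(\rho^N_T,\rho_T)^2\le \mathbb E W_2(\rho^N_T,\rho_T)^2\le 2\,\mathbb E M_2(\rho^N_T)+2M_2(\rho_T)=4M_2(\rho_T)$, where $M_2$ denotes the second moment. The paper does not use this ingredient.

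The payoff is a sharper estimate: your route gives $e^{\mathrm L}_{\integral}\le C\tau^{1/2}$ with no $\alpha(N)$ at all, since you may bound $C\tau\,\mathbb E W_1$ by $C\tau$ rather than by $C\alpha(N)$. So your remark that the bias is ``the source of the $\alpha(N)$ term'' is not quite apt. In your argument $\alpha(N)$ is unnecessary, and in the paper's it is an artifact of pivoting through $\widetilde Y$. The quadrature error is genuinely a purely temporal error.

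Your alternative route, which propagates $\widetilde Y_{i,T}$ and the difference $Y_{i,T}-\widetilde Y_{i,T}$ separately, is essentially the paper's decomposition. There the $\alpha(N)$ reappears through the pathwise bound $C\,W_1(\rho^N_T,\rho_T)$ on the difference part.
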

	
	\begin{proof}We again pivot against the mean field particles $\widetilde Y_i$ from \eqref{eq:tildeGL}:
		\begin{align*}
			\langle \GmiN - \overline{\GmiN}, \phi\rangle_x =&  \frac 1 N \sum_i \left\langle\int  Y_{i,t} \delta_{x-X_{i,t}} -  \overline{Y_{i,t} }\delta_{x-\overline{X_{i,t}}} \d t\ , \ \phi(x)\right\rangle_x \\
			=&\frac 1 N \sum_i \int  Y_{i,t} \phi(X_{i,t})  +(1-1)  \widetilde Y_{i,t} \phi(X_{i,t})  +(1-1)  \overline{\widetilde Y_{i,t}} {\phi(X_{i,t})} \\
			&\hspace{1.5cm} +(1-1) \overline{\widetilde Y_{i,t}} \overline{\phi(X_{i,t})} - \overline{Y_{i,t} }\phi(\overline{X_{i,t}}) \d t
		\end{align*}
		By exchangeability of the particle pairs $(X_i,Y_i)$, the mean absolute error $\mathbb E\left\lvert \langle \GmiN - \overline{\GmiN}, \phi\rangle_x\right\rvert$ can be upper bounded by the one-particle mean error, without loss of generality for $i=1$, and the triangle inequality gives
		\begin{align*}
			e^{\mathrm{L}}_{\integral}  \leq &  \int  \mathbb E[\lvert Y_{1,t}-{\widetilde Y_{1,t}}\rvert\left\lvert\phi(X_{1,t})\right\rvert] + \mathbb E[\lvert {\widetilde Y_{1,t}} - \overline{\widetilde Y_{1,t}}\rvert\left\lvert \phi({X_{1,t}})\right\rvert] 
			\\
			&\qquad 
			+ \mathbb E[\lvert \overline{\widetilde Y_{1,t}}||\phi(X_{1,t}) - \phi(\overline{X_{1,t}})\rvert]
			+ \mathbb E[\lvert \overline{\widetilde Y_{1,t}} -\overline{ Y_{1,t}} || \phi(\overline{X_{1,t}})\rvert] \d t \\
			=  & \ 
			B_1 + B_2+B_3 + B_4.
		\end{align*}
		By boundedness of $\phi$, the pivoting terms $B_1$ and $B_4$ are dominated by the sampling and stability errors in \Cref{lem:tildeY}\ref{lem:tildeY:samplingrate}, and there exists an $N$-independent constant $C$ for which
		$$B_1+B_4  \quad \leq  \quad C_\phi C_{\widetilde Y} T \alpha(N) \quad = \quad  C \alpha(N).$$
		Insert the evolution equation~\eqref{eq:adjoint_particle} and the standard ODE error estimate, together with \Cref{lem:tildeY}\ref{lem:tildeY:bound}, to obtain
		\begin{align*}
			B_2 
			= \int   \mathbb E\left[\left\lvert \int_{\overline t}^t -\grd a(X_{1,s})^\top\widetilde Y_{1,s} \d s\right\rvert\left\lvert \phi({X_{1,t}})\right\rvert\right] \d t
			\leq C_{\phi}\|\grd a\|_\infty C_Y^{1/2} T \tau =C' \tau.
		\end{align*}
		under the required regularity assumptions~\ref{assumptions:a}, \ref{assumptions:Lagrangian}. Similarly, apply Lipschitzness of $\phi$ and  H\"older's inequality
		\begin{align*}
			B_3
			\leq & \ C_\phi\int \mathbb E[\lvert \overline{\widetilde Y_{1,t}}\rvert\lvert X_{1,t} -\overline{X_{1,t}}\rvert]\d t \leq C_\phi\int \sqrt{\mathbb E[\lvert \overline{\widetilde Y_{1,t}}\rvert^2]}\sqrt{\mathbb E[\lvert X_{1,t} -\overline{X_{1,t}}\rvert^2]}\d t\\
			\leq & \ C_{\phi} C_Y^{1/2}  C_X^{1/2} T\tau^{1/2} \quad = \quad  C'' \tau^{1/2} .
		\end{align*}
		where we used the  boundedness of moments of $\widetilde Y$ in \Cref{lem:tildeY}\ref{lem:tildeY:bound} and the temporal regularity of the particle dynamics in  \Cref{prop:Particles:Existence+Bounds} with constant $C_X$.
	\end{proof}

	\begin{proposition}\label{prop:error:Lagrange:EM}
		Let Assumptions~\ref{assumptions:a}, ~\ref{assumptions:rho0},~\ref{assumptions:Lagrangian},~\ref{assumptions:JLipschity} and with $a\in W^{2,\infty}(\mathbb R^d)$  hold. For $\tau \leq 1$,  there exists a constant $C$ independent of $N,\tau$, such that 
		$$
		e^{\mathrm{L}}_{\EM} = \mathbb E\left\lvert \langle  \overline{\GmiN}-  \overline{\GmiN}_{,\tau}, \phi\rangle_x\right\rvert \leq C\left(\alpha(N) + \tau^{1/2}\right) .
		$$
	\end{proposition}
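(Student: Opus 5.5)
We compare the time-discrete exact trajectories $(X_{i,t_k},Y_{i,t_k})$ with the Euler--Maruyama/backward-Euler pairs $(X^\tau_{i,k},Y^\tau_{i,k})$, which are driven by the same Brownian increments and initial data. By exchangeability, as in the proof of Proposition~\ref{prop:error:Lagrange:int}, it suffices to bound the single-particle quantity
\[
\int_0^T \mathbb E\bigl\lvert Y_{1,t_k}\phi(X_{1,t_k}) - Y^\tau_{1,k}\phi(X^\tau_{1,k})\bigr\rvert\d t .
\]
We split the integrand as $(Y_{1,t_k}-Y^\tau_{1,k})\phi(X_{1,t_k}) + Y^\tau_{1,k}\bigl(\phi(X_{1,t_k})-\phi(X^\tau_{1,k})\bigr)$. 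The second piece is bounded by $\mathrm{Lip}(\phi)$ times Cauchy--Schwarz. This requires a uniform second-moment bound on $Y^\tau_{1,k}$ together with the strong Euler--Maruyama rate $\mathbb E\lvert X_{1,t_k}-X^\tau_{1,k}\rvert^2\le C\tau$ from \Cref{prop:EM:Rate}. The moment bound follows from the discrete Gronwall inequality, $\lvert Y^\tau_{1,k}\rvert\le e^{T\|\nabla a\|_\infty}\lvert Y^\tau_{1,K}\rvert$, combined with the second-moment control of the terminal data from \ref{assumptions:Lagrangian} and the moment bounds of \Cref{cor:adjoint:MomentBd+Lipschitz}. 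This piece therefore contributes $C\tau^{1/2}$.

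The first piece needs $\mathbb E\lvert Y_{1,t_k}-Y^\tau_{1,k}\rvert\le C(\alpha(N)+\tau^{1/2})$, uniformly in $k$. We propagate this error backward in time.

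\emph{Terminal error.} We split
\[
Y_{1,T}-Y^\tau_{1,K}=\Bigl[\nabla\tfrac{\delta\mathcal J}{\delta\rho}\big\rvert_{\rho^N_T}-\nabla\tfrac{\delta\mathcal J}{\delta\rho}\big\rvert_{\rho^{N,\tau}_T}\Bigr](X_{1,T})+\Bigl[\nabla\tfrac{\delta\mathcal J}{\delta\rho}\big\rvert_{\rho^{N,\tau}_T}(X_{1,T})-\nabla\tfrac{\delta\mathcal J}{\delta\rho}\big\rvert_{\rho^{N,\tau}_T}(X^\tau_{1,K})\Bigr].
\]
The first bracket is at most $C_J W_1(\rho^N_T,\rho^{N,\tau}_T)\le C_J\frac1N\sum_i\lvert X_{i,T}-X^\tau_{i,K}\rvert$, whose expectation is $O(\tau^{1/2})$. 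The second bracket is at most $L_J\lvert X_{1,T}-X^\tau_{1,K}\rvert$ by \ref{assumptions:JLipschity}.

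Here lies the main obstacle: $L_J$ depends on the random measure $\rho^{N,\tau}_T$. I would first try to pivot to the fixed measure $\rho_T$, for which $L_J(\rho_T)$ is deterministic. The two measure changes are then absorbed through \ref{assumptions:Lagrangian} at cost $C_J\bigl(W_1(\rho^{N,\tau}_T,\rho_T)+W_1(\rho^N_T,\rho_T)\bigr)$. In expectation this is $O(\alpha(N)+\tau^{1/2})$ by \Cref{thm:samplingRho} and the EM rate, and it is the source of the $\alpha(N)$ term in the statement. Failing that route, I would assume $L_J$ is uniform over $\mathcal P_q$.

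\emph{Backward recursion.} Writing the exact adjoint over a step as
\[
Y_{1,t_k}=Y_{1,t_{k+1}}+\int_{t_k}^{t_{k+1}}\nabla a(X_{1,s})^\top Y_{1,s}\d s ,
\]
and subtracting \eqref{eq:EulerScheme:Adjoint}, the error $E_k:=\mathbb E\lvert Y_{1,t_k}-Y^\tau_{1,k}\rvert$ obeys
\[
E_k\le (1+\tau\|\nabla a\|_\infty)E_{k+1}+\tau\|\nabla^2 a\|_\infty\,\mathbb E\bigl[\lvert Y_{1,t_{k+1}}\rvert\lvert X_{1,t_k}-X^\tau_{1,k}\rvert\bigr]+R_k .
\]
The middle term uses $a\in W^{2,\infty}$ to control the change in the Jacobian, and Cauchy--Schwarz bounds it by $C\tau\cdot\tau^{1/2}$. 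The local truncation error $R_k$ involves $\lvert\nabla a(X_{1,s})-\nabla a(X_{1,t_k})\rvert\,\lvert Y_{1,s}\rvert$ and $\lvert Y_{1,s}-Y_{1,t_{k+1}}\rvert$. Using the temporal Hölder regularity of $X$ from \Cref{prop:Particles:Existence+Bounds} and the moment bounds on $Y$, we get $R_k\le C\tau^{3/2}$.

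Discrete Gronwall then gives
\[
\max_k E_k\le e^{T\|\nabla a\|_\infty}\bigl(E_K+CT\tau^{1/2}\bigr)\le C\bigl(\alpha(N)+\tau^{1/2}\bigr).
\]
Multiplying by $\|\phi\|_\infty$ and integrating over $[0,T]$, then adding the second piece, yields the claimed bound on $e^{\mathrm{L}}_{\EM}$.
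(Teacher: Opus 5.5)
Your proof is correct, but it is organized differently from the paper's. The paper never compares $Y_1$ with $Y_1^\tau$ directly. It inserts the mean-field adjoint $\widetilde Y_1$ and its backward-Euler counterpart $\widetilde Y_1^\tau$, both with terminal datum $\grd\frac{\delta\mathcal J}{\delta\rho}\rvert_{\rho_T}(X_{1,T})$, and splits the error into four terms:
\begin{itemize}
\item $C_1$ compares $Y$ with $\widetilde Y$ via Lemma~\ref{lem:tildeY}\ref{lem:tildeY:samplingrate}, at cost $\alpha(N)$;
\item $C_2$ compares $\widetilde Y$ with $\widetilde Y^\tau$ via the recursion of Lemma~\ref{lem:EulerScheme:Stab+Acc}\ref{lem:EulerScheme:Accuracy}, with zero terminal error;
\item $C_3$ is the analogue of your second piece, with $\widetilde Y^\tau$ in place of $Y^\tau$;
\item $C_4$ compares $\widetilde Y^\tau$ with $Y^\tau$, using stability of the scheme plus exactly your terminal pivot through $\rho_T$, with $L_J(\rho_T)$ and $C_J W_1(\rho_T,\rho_T^{N,\tau})$.
\end{itemize}
You instead run that backward recursion once, with a nonzero terminal error $E_K$, so $C_1$, $C_2$, $C_4$ collapse into a single discrete Gronwall step. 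Your pivot through $\rho_T$ does work, so no strengthening of \ref{assumptions:JLipschity} is needed.

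What the paper's detour buys is economy in moment bounds. Every second moment it uses is for an adjoint whose terminal datum is taken at the deterministic measure $\rho_T$ and evaluated at $X_{1,T}\sim\rho_T$. These bounds therefore follow immediately from \ref{assumptions:Lagrangian} with $\nu=\rho_T$. Your route needs $\sup_t\E\lvert Y_{1,t}\rvert^2$ and $\sup_k\E\lvert Y^\tau_{1,k}\rvert^2$ uniformly in $N,\tau$. But their terminal data sit at random empirical measures, and for $Y^\tau$ at $X^\tau_{1,K}$, which is not $\rho_T$-distributed. \ref{assumptions:Lagrangian} by itself only gives membership in $L^2_\nu$ for each fixed $\nu$, so the citation you give does not deliver a uniform bound. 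This is easily repaired with the same pivot, e.g.\ $\lvert Y^\tau_{1,K}\rvert\le\lvert\widetilde Y_{1,T}\rvert+L_J(\rho_T)\lvert X_{1,T}-X^\tau_{1,K}\rvert+C_J W_1(\rho_T,\rho^{N,\tau}_T)$. You then use that $\E\, W_1(\rho_T,\rho^{N,\tau}_T)^2$ is bounded in terms of second moments. The analogous step handles $Y_{1,t}$, but you should state it.

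In exchange, your direct comparison shows where the $\alpha(N)$ in this proposition really comes from: only from the measure dependence of $L_J$. If $L_J$ were uniform in $\nu$, your terminal error would be $O(\tau^{1/2})$ and you would obtain $e^{\mathrm L}_{\EM}\le C\tau^{1/2}$. The paper's splitting, by contrast, produces $\alpha(N)$ in $C_1$ and $C_4$ regardless.
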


	Before  bounding the Euler--Maruyama error, we collect some classical properties of the Euler scheme. 
	
	\begin{lemma}
		[Accuracy and stability of Euler scheme] \label{lem:EulerScheme:Stab+Acc}
		Let  Assumptions~\ref{assumptions:a}, ~\ref{assumptions:rho0},~\ref{assumptions:Lagrangian} hold, and additionally assume $a\in W^{2,\infty}(\mathbb R^d)$.  Fix $i$ and let $X_{i,k}^\tau$ be a solution to \eqref{eq:EMScheme}. 
		\begin{enumerate}[(a)]
			\item \label{lem:EulerScheme:Stability} (stability) There exists a constant $C_{Y^\tau}$, depending only on $a$ and $T$, such that the solution  $\widetilde Y_{i,k}^\tau$ to the Euler scheme \eqref{eq:EulerScheme:Adjoint} with final condition 
			$\widetilde Y_{i,K}^\tau =  \zeta$
			satisfies 
			$$\left\lvert\widetilde Y_{i,k}^\tau\right\rvert\leq C_{\widetilde Y^\tau}\left\lvert \zeta\right\rvert.$$
			
			\item  \label{lem:EulerScheme:Accuracy} (accuracy) Let $\widetilde Y_{i,t}$ be the solution of \eqref{eq:tildeGL} and $\widetilde Y_{i,k}^\tau$ be the solution of \eqref{eq:EulerScheme:Adjoint} with final condition $\zeta = \grd \frac{\delta \mathcal J}{\delta \rho_T}\large\mid_{\rho_T}(X_{i,T})$. There exists a constant $C'_{Y^\tau}$, depending only on $a$ and $T$ and the uniform second moment bound on $\widetilde Y$, such that 
			$$\mathbb E\left\lvert \widetilde Y_{i,t_k}-\widetilde Y_{i,k}^\tau\right\rvert\leq C'_{\widetilde Y^\tau} \tau^{1/2}.$$
		\end{enumerate}
		
	\end{lemma}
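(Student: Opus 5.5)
Both parts exploit the linear, one-step structure of the backward Euler recursion \eqref{eq:EulerScheme:Adjoint}. Writing $Y_{i,k}^\tau = Y_{i,k+1}^\tau\,(I+\tau\grd a(X_{i,k}^\tau))$ as a row vector times a matrix, iteration gives
\[
\widetilde Y^\tau_{i,k} = \zeta \prod_{j=K-1}^{k}\bigl(I+\tau\grd a(X^\tau_{i,j})\bigr).
\]

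\textbf{Stability (a).} By \ref{assumptions:a}, each factor has operator norm at most $1+\tau\|\grd a\|_\infty$. Since $K\tau = T$, this yields
\[
|\widetilde Y^\tau_{i,k}| \le (1+\tau\|\grd a\|_\infty)^{K-k}|\zeta| \le e^{T\|\grd a\|_\infty}|\zeta|.
\]
So we may take $C_{\widetilde Y^\tau}=e^{T\|\grd a\|_\infty}$. This bound is pathwise and deterministic, and it is independent of the particle and of $\tau$.

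\textbf{Accuracy (b).} I compare the two processes through the local errors $e_k := \widetilde Y_{i,t_k}-\widetilde Y^\tau_{i,k}$, so that $e_K=0$. Integrating the ODE \eqref{eq:adjoint_particle} over $[t_k,t_{k+1}]$ gives
\[
\widetilde Y_{i,t_k} = \widetilde Y_{i,t_{k+1}} + \int_{t_k}^{t_{k+1}} \widetilde Y_{i,s}\grd a(X_{i,s})\d s .
\]
Subtracting the scheme leads to the recursion
\[
e_k = e_{k+1}(I+\tau\grd a(X^\tau_{i,k})) + r_k, \qquad r_k = \int_{t_k}^{t_{k+1}}\bigl(\widetilde Y_{i,s}\grd a(X_{i,s}) - \widetilde Y_{i,t_{k+1}}\grd a(X^\tau_{i,k})\bigr)\d s .
\]
By the discrete Gronwall argument from part (a), $|e_k|\le e^{T\|\grd a\|_\infty}\sum_{j\ge k}|r_j|$, so it suffices to show $\mathbb E|r_j|\le C\tau^{3/2}$. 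I split the integrand of $r_j$ into three pieces:
\begin{enumerate}[(i)]
\item $(\widetilde Y_{i,s}-\widetilde Y_{i,t_{j+1}})\grd a(X_{i,s})$. From the ODE and Gronwall this is bounded by $C\tau|\widetilde Y_{i,t_{j+1}}|$. Its expectation is $O(\tau)$ by the second moment bound \Cref{lem:tildeY}\ref{lem:tildeY:bound}.
\item $\widetilde Y_{i,t_{j+1}}(\grd a(X_{i,s})-\grd a(X_{i,t_j}))$. Here I use $a\in W^{2,\infty}$, so $\grd a$ is Lipschitz. Cauchy--Schwarz, the bounded second moment of $\widetilde Y$, and the time regularity $\mathbb E|X_{i,s}-X_{i,t_j}|^2\le C_X\tau$ from \Cref{prop:Particles:Existence+Bounds} give a bound of order $\tau^{1/2}$.
\item $\widetilde Y_{i,t_{j+1}}(\grd a(X_{i,t_j})-\grd a(X^\tau_{i,j}))$. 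Again by Lipschitz continuity of $\grd a$ and Cauchy--Schwarz, this is controlled by $\sqrt{\mathbb E|X_{i,t_j}-X^\tau_{i,j}|^2}\le C\tau^{1/2}$, using the strong Euler--Maruyama rate \Cref{prop:EM:Rate}.
\end{enumerate}
Integrating each piece over an interval of length $\tau$ gives $\mathbb E|r_j|\le C\tau^{3/2}$. Summing the $K=T/\tau$ terms then yields $\mathbb E|e_k|\le C\tau^{1/2}$.

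\textbf{Main obstacle.} The delicate point is that $\widetilde Y$ is anticipating: it depends on $X_{i,T}$. Hence no martingale or independence structure is available, and every product of a $\widetilde Y$ factor with a path increment has to be handled by Cauchy--Schwarz. This is why the result needs only the uniform $L^2$ bound on $\widetilde Y$, and it explains why the rate is $\tau^{1/2}$ rather than $\tau$. A further care point is that the Gronwall factor must stay deterministic; this works because $\grd a$ is bounded uniformly, which also makes it unnecessary to use the moment bounds inside the propagation step itself.
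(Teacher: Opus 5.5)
Your proposal is correct and follows essentially the same route as the paper. Stability is the same product bound $e^{T\|\grd a\|_\infty}$, and accuracy uses the same recursion $e_k = e_{k+1}(I+\tau\grd a(X^\tau_{i,k})) + r_k$, with the residual split into a $\widetilde Y$-increment term, an $X$-path-regularity term and an Euler--Maruyama term, each of order $\tau^{3/2}$ via Cauchy--Schwarz and the second-moment bound on $\widetilde Y$. The only cosmetic differences are that you unroll the recursion pathwise before taking expectations, and you bound the $\widetilde Y$ increment pathwise rather than through the Lipschitz-in-expectation estimate of \Cref{cor:adjoint:MomentBd+Lipschitz}.
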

	
	\begin{proof}[Proof of \Cref{prop:error:Lagrange:EM}]
		\begin{align*}
			\langle \overline{\GmiN}-  \overline{\GmiN}_{,\tau}, \phi\rangle_x =&  \frac 1 N \sum_i \left\langle\int    \overline{Y_{i,t} }\delta_{x-\overline{X_{i,t}}}  - \overline{Y_{i,t}^\tau  }\delta_{x-\overline{X_{i,t}^\tau}}\d t\ , \ \phi(x)\right\rangle_x \\
			=&\frac 1 N \sum_i \int  \overline{Y_{i,t} }\phi(\overline{X_{i,t}}) +(1-1) \overline{\widetilde Y_{i,t} }\phi(\overline{X_{i,t}}) +(1-1) \overline{\widetilde Y_{i,t}^\tau }\phi(\overline{X_{i,t}})\\
			&\hspace{1.2cm}+(1-1) \overline{\widetilde Y_{i,t}^\tau }\phi(\overline{X_{i,t}^\tau})-  \overline{Y_{i,t}^\tau }\phi(\overline{X_{i,t}^\tau}) \d t
		\end{align*}
		Again, the mean average error decomposes 
		$$
		e^{\mathrm{L}}_{\EM}  \leq C_1 + C_2 + C_3 + C_4
		$$
		and the individual error components can be written in terms of  one-particle errors by exchangeability. The pivoting errors are governed by the sampling rate from \Cref{lem:tildeY}\ref{lem:tildeY:samplingrate}
		\begin{align*}
			C_1 = & \ \E\left\lvert \int (\overline{Y_{1,t} } - \overline{\widetilde Y_{1,t} })\phi(\overline{X_{1,t}})\d t \right\rvert 
			\leq  \ C_\phi \int\E\left\lvert \overline{Y_{1,t} } - \overline{\widetilde Y_{1,t} }\right\rvert  \d t  
			\leq C_\phi C_{\widetilde Y} T \alpha(N), 
		\end{align*}
		and the stability of the Euler scheme, see \Cref{lem:EulerScheme:Stab+Acc}\ref{lem:EulerScheme:Stability}, in combination with the Euler-Maruyama rate \eqref{eq:EM:Rate}:
		\begin{align*}
			C_4 = & \   \E\left\lvert \int(\overline{\widetilde Y_{1,t}^\tau }-  \overline{Y_{1,t}^\tau })\phi(\overline{X_{1,t}^\tau}) \d t \right\rvert  
			\leq  \ C_\phi \int \E\left\lvert \overline{\widetilde Y_{1,t}^\tau }-  \overline{Y_{1,t}^\tau }\right\rvert \d t  \\
			\leq &\  C_\phi C_{\widetilde Y^\tau} T \E\left\lvert \grd \left.\frac{\delta \mathcal J}{\delta \rho_T}\right\rvert_{\rho_T}(X_{1,T}) +(1-1)\left.\frac{\delta \mathcal J}{\delta \rho_T}\right\rvert_{\rho_T}(X_{1,T}^\tau) - \left.\frac{\delta \mathcal J}{\delta \rho_T}\right\rvert_{\rho_T^{N,\tau}}(X_{1,T}^\tau)\right\rvert  \\
			\overset{\text{\ref{assumptions:JLipschity},\ref{assumptions:Lagrangian}}}\leq & \ C_\phi C_{\widetilde Y^\tau} T\left(L_J(\rho_T) \E \left\lvert X_{1,T}-X_{1,T}^\tau\right\rvert +C_J \E[W_1({\rho_T},{\rho_T^{N,\tau}})]  \right)\\
			\leq & \  C_\phi C_{\widetilde Y^\tau} T\left(L_J(\rho_T) C_{EM} \tau^{1/2} +C_J (C_{\rho}\alpha(N) + C_{EM}\tau^{1/2}) \right),
		\end{align*}
		where the last line combined \Cref{prop:EM:Rate,thm:samplingRho,corr:EM:RateWasserstein}.
		
		The remaining parts describe the temporal discretization error.
		The ODE discretization error is treated in \Cref{lem:EulerScheme:Stab+Acc}\ref{lem:EulerScheme:Accuracy}:
		\begin{align*}
			C_2= & \ \E\left\lvert \int (\overline{\widetilde Y_{1,t} } - \overline{\widetilde Y_{1,t}^\tau })\phi(\overline{X_{1,t}})\d t \right\rvert \leq C_\phi \int \E\left\lvert \overline{\widetilde Y_{1,t} } - \overline{\widetilde Y_{1,t}^\tau }\right\rvert \d t \leq C_{\phi}C'_{\widetilde Y^\tau} T \tau^{1/2},
		\end{align*}
		where the constants only depend on $a,T$ and the uniform second moment bound on $\widetilde Y_{1,t}$ from \Cref{cor:adjoint:MomentBd+Lipschitz} and are therefore independent of $N,\tau$.
		For the SDE discretization error in $C_3$, we leverage Lipschitzness of $\phi$ and the Euler--Maruyama error on the SDE
		\begin{align*}
			C_3= & \ \E\left\lvert \int  \overline{\widetilde Y_{1,t}^\tau }(\phi(\overline{X_{1,t}})- \phi(\overline{X_{1,t}^\tau}))\d t \right\rvert \leq C_\phi \int \E\left[\lvert \overline{\widetilde Y_{1,t}^\tau }\rvert \lvert \overline{X_{1,t}}- \overline{X_{1,t}^\tau}\rvert \right]\d t \\
			\leq &C_\phi C'_{\widetilde Y^\tau} C_{EM}\tau^{1/2},
		\end{align*}
		together with  the uniform in $t$ moment bounds on $\widetilde Y_1^\tau$ that arise as a consequence of \Cref{lem:EulerScheme:Stab+Acc}\ref{lem:EulerScheme:Stability} and Assumption~\ref{assumptions:Lagrangian}  $$\E\left\lvert \overline{\widetilde Y_{1,t}^\tau }\right\rvert ^2\leq  C_{\widetilde Y^\tau}^2\E\left\lvert \grd \left. \frac{\delta \mathcal J}{\delta \rho_T}\right\rvert _{\rho_T}(X_{1,T})\right\rvert^2 =: C'_{\widetilde Y^\tau}. $$
		Clearly,  $C'_{\widetilde Y^\tau}$ depends on $\rho_T$, but not $N$ or $\tau$.
		
	\end{proof}
	
	\section{Numerical Examples}

	In the following, we numerically verify the convergence of the Eulerian and  Lagrangian particle approximations~\eqref{eq:Eulerian_gradient} and~\eqref{eq:Lagrangial_gradient} to the continuum gradient~\eqref{eq:Gradient_two_perspectives}; see \Cref{thm:GmiN,thm:GmaN} and \Cref{prop:error:EM,prop:error:int,prop:error:Lagrange:int,prop:error:Lagrange:EM}.

	\subsection{Test Problem}
	We study the following test case in dimensions $d=1$ and $d=2$:
	
	\paragraph{Drift.} The advection coefficient $a$ is fixed: For fixed $\delta>0$, we choose the form
	$$a(x) =  \grd V(x) \qquad \text{with} \qquad V(x)=\delta^2\big(\sqrt{1+\left\lvert x\right\rvert ^2/\delta^2}-1\big), $$
	motivated by a  pseudo-Huber potential term in a gradient flow setting.
	This potential is a classical surrogate for the $L^1$ loss that appears frequently in the context of robust training for machine learning models and 
	sparse recovery in imaging or physics-based inverse problems \cite{song2024improved,barron2019general,charbonnier1997deterministic,kawakami2023approximate}.

	\paragraph{Objective functions.} 
	We study two objective functionals of the final-time density $\rho_T$ at time horizon $T=1$:
	\begin{enumerate}
		\item[(J1)] \emph{Linear objective with Gaussian weight,}
		\begin{equation}\tag{$J1$}\label{eq:linearObjective}
			\mathcal J[\rho_T]=\int_{\mathbb R^d} \psi(x) \d\rho_T(x),
			\qquad
			\psi(x) = e^{-\frac{\left\lvert x\right\rvert^2}{2}} .
		\end{equation}
		In this case, the adjoint final condition $\left.\frac{\delta\mathcal J}{\delta\rho_T}\right\rvert_{\nu}=\psi$ is constant in $\nu$ and, thus, the adjoint and pathwise-adjoint approximations coincide exactly with their continuum counterparts $\mu^N = \mu$ and   $Y^i = \widetilde {Y^i}$.
		\item[(J2)] \emph{Kernel (MMD) energy with a Gaussian kernel,}
		\begin{equation}\tag{$J2$}\label{eq:MMDObjective}
			\mathcal J[\rho_T]
			=
			\frac12\iint_{\mathbb R^d\times\mathbb R^d} K(x,y)\d\rho_T(x)\d\rho_T(y),
			\qquad
			K(x,y) = e^{-\frac{\left\lvert x-y\right\rvert^2}{2}} .
		\end{equation}
		In this case, the terminal condition $\left.\frac{\delta\mathcal J}{\delta\rho}\right\rvert_{\nu}=K\ast \nu$ genuinely depends on the measure.  This satisfies all points in \Cref{assumptions}, as shown in \Cref{rem:assumptions:linear+MMD}.
	\end{enumerate}
	
	\paragraph{Initial datum.} We choose a bimodal Gaussian mixture as initial datum
	\begin{equation*}
		\rho_0(x)
		=
		\frac{1}{2\,(2\pi\sigma_0^2)^{d/2}}
		\left(
		e^{-\frac{\left\lvert x-2\cdot\mathds 1\right\rvert^2}{2\sigma_0^2}}
		+
		e^{-\frac{\left\lvert x+2\cdot\mathds 1\right\rvert^2}{2\sigma_0^2}}
		\right),
		\qquad \sigma_0 = 0.35,
	\end{equation*}
	with $\mathds 1 = (1,\dots,1)\in\mathbb R^d$, so that $\rho_0\in\mathcal
	P_q(\mathbb R^d)$ for every $q$.
	
	\paragraph{Test field.} All errors are reported  against
	the fixed compactly supported test field $\phi 
	\in C^1_c(\mathbb R^d;\mathbb R^d)$ constructed from the two bump functions with width $w=2$, unless stated differently:
	\[
	\phi_w(x) = 3\,\Phi\left(\frac{x-2}w\right)-3\,\Phi\left(\frac{x+3}w\right),
	\qquad
	\Phi(y) = e^{-\frac{1}{1-\left\lvert y\right\rvert^2}}\ \mathds 1_{\{\left\lvert y\right\rvert<1\}} .
	\]
	
	Over time, we expect the two initial modes to be pushed towards the center by the drift term, and eventually merge.

	\subsection{Particle approximation}
	
	The connection between the Fokker--Planck equation \eqref{eq:FP} and the particle dynamics \eqref{eq:process} is well established through $\rho_t = \operatorname{Law}(X_t)$, and the approximation with the empirical density $\rho^N_t$ is classical. We visualize this behavior in the following figures: In dimension $d=1$, \Cref{fig:forward_1D} shows agreement of the densities over the full propagation period $[0,T]$. \Cref{fig:final_1D} provides a more detailed comparison at the final time $T=1$  and its consequences for the terminal conditions $\mu_T$ and $\mu_T^N$ of the adjoint equation, corresponding to the MMD objective~\eqref{eq:MMDObjective}. In dimension $d=2$, we only show the final densities $\rho_T, \rho_T^N$ and the resulting adjoint terminal conditions $\mu_T,\mu_T^N$, see \Cref{fig:forward_2D}.
	
	The following computational settings were used to generate the plots: 
	\begin{enumerate}
		\item[(S1)] \makeatletter\protected@edef\@currentlabel{(S1)}\makeatother \label{itm:PDEsetting} The PDE~\eqref{eq:FP} is solved  on the domain $x\in [-8,8]^d$ with no-flux boundary conditions by a finite-volume Crank--Nicolson scheme on a space-time grid with time step size $\tau^{PDE} = 1/n_t= 1/20000$ and a dimension-dependent spatial step size of $1/n_x = d/400$ in each direction. 
		\item[(S2)]\makeatletter\protected@edef\@currentlabel{(J2)}\makeatother   $N$  particle trajectories are computed by an Euler--Maruyama scheme~\eqref{eq:EMScheme} with time step $\tau = 2^{-8}$, with $N\in \{200,2000\}$ in dimension $d=1$ and $N\in \{200,10000\}$ for $d=2$.
	\end{enumerate}
	
	\begin{figure}[h]
		\centering
		\includegraphics[width=.33\linewidth]{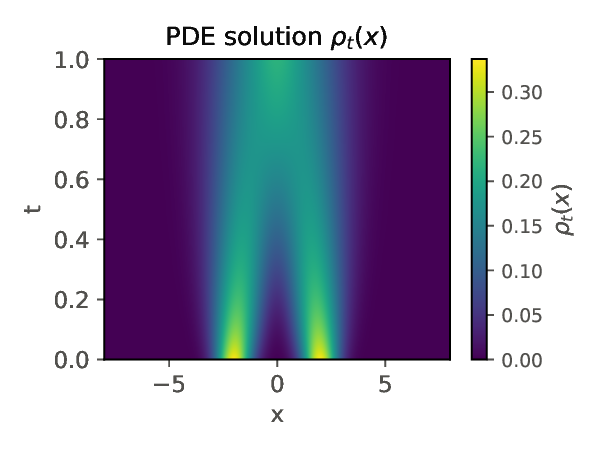} ~\hspace{-.5cm}~ \includegraphics[width=.33\linewidth]{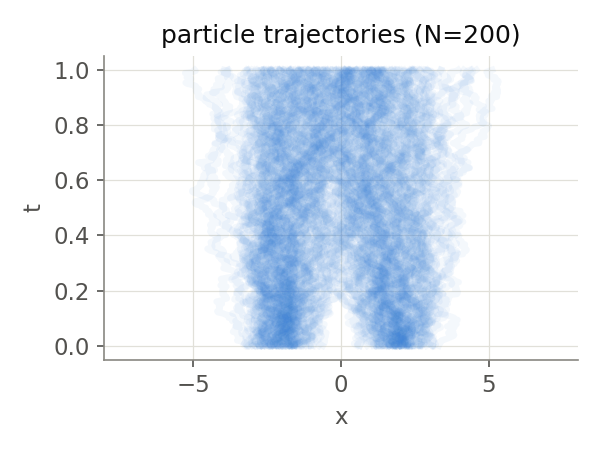} ~\hspace{-.5cm}~ \includegraphics[width=.33\linewidth]{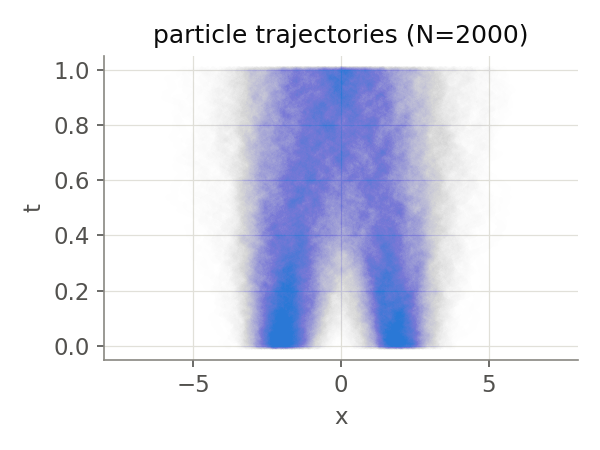}
		\caption{Evolution of the densities in $d=1$: reference density $\rho_t$ (left panel) and  empirical particle densities $\rho_t^N$ for  $N=200$ (middle panel) and $N=2000$ (right panel). Two modes of the initial data are pushed toward the center and diffuse. }
		\label{fig:forward_1D}
	\end{figure}
	\begin{figure}[h]
		\centering\includegraphics[width=.5\linewidth]{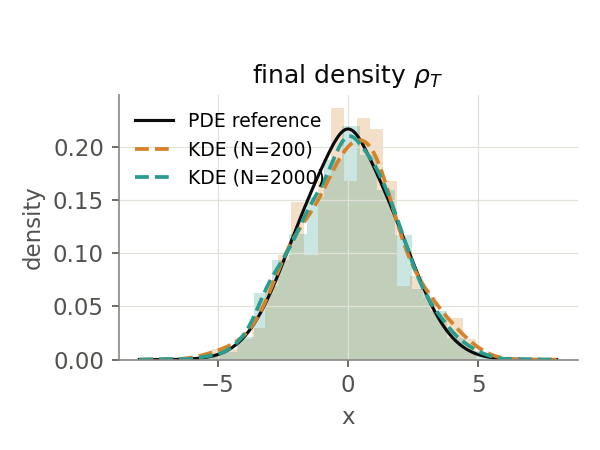}\includegraphics[width=.5\linewidth]{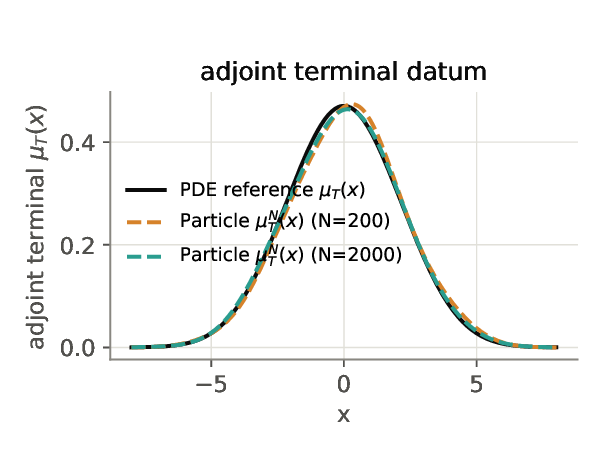}
		\caption{Comparison of final-time  densities in $d=1$: $\rho_T$ (blue) and  $\rho_T^N$, represented as a histogram and a kernel density estimate (orange), for $N\in\{200,2000\}$ particle trajectories (left panel). Corresponding final conditions for the MMD objective \eqref{eq:MMDObjective} (right panel).}
		\label{fig:final_1D}
	\end{figure}
	
	\begin{figure}[h]
		\centering
		\includegraphics[width=1.2\linewidth]{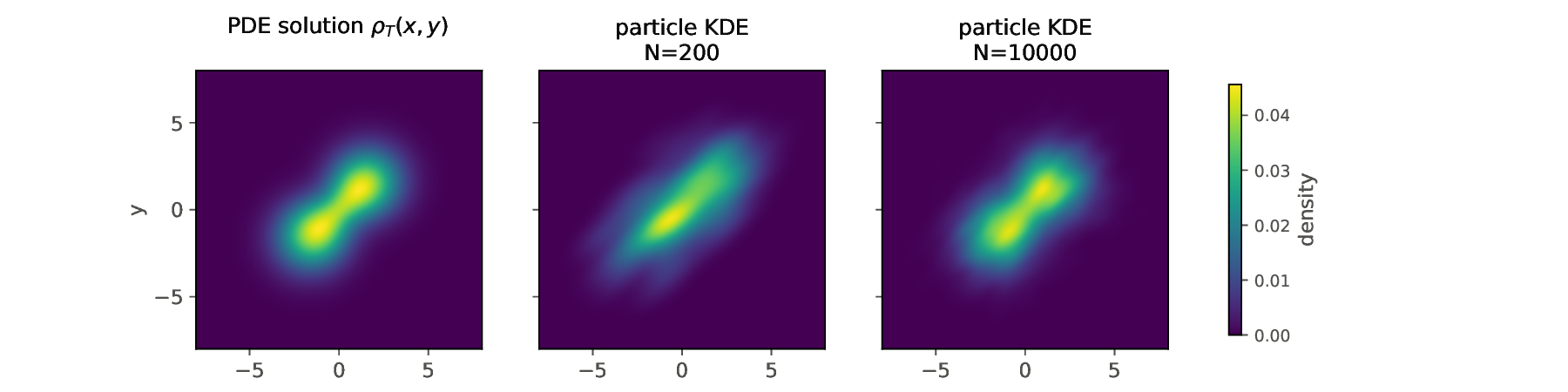}\\ \includegraphics[width=1.2\linewidth]{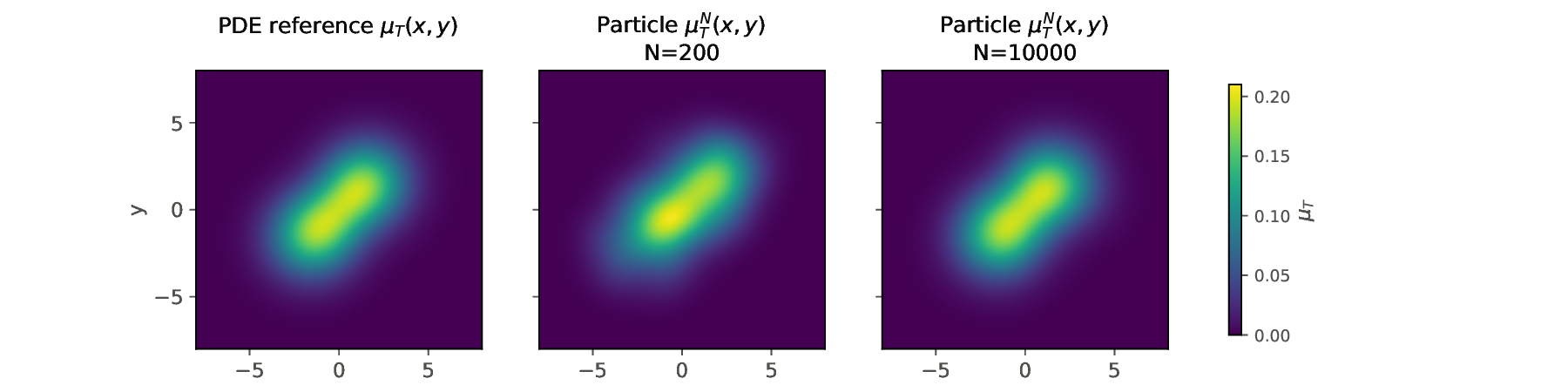}
		\caption{Upper row: Comparison of final-time  densities in $d=2$: PDE reference $\rho_T$ (upper left) and a kernel density estimator of $\rho_T^N$ for $N=200$ (upper middle) and $N=10,000$ (upper right) particle trajectories. The two initial modes are pushed towards the center by the drift term and almost merge.
			\\
			Lower row: Corresponding adjoint final conditions for the MMD objective $(J2)$ based on final states $\mu_T$ (lower left) and $\mu^N_T$ for $N=200$ (lower middle) and $N=10,000$ particles (lower right). Both are Gaussian-mollified versions of the top row final densities. }
		\label{fig:forward_2D}
	\end{figure}

	These  particle approximations lead to  the Eulerian and Lagrangian gradient approximations $\mathcal G^{\mathrm E} [a](x)$ and $\mathcal G^{\mathrm L} [a](x)$ through~\eqref{eq:Eulerian_gradient}  and \eqref{eq:Lagrangial_gradient}, respectively, as displayed in~\Cref{fig:gradient_1D} for dimension $d=1$ and \Cref{fig:gradient_2D} for $d=2$. 
	
	We mention that both the particle and gradient approximations in \Cref{fig:forward_1D,fig:forward_2D} and \Cref{fig:gradient_1D,fig:gradient_2D}  are visibly  coarse for a relatively small number of $N=200$ particles and become more accurate for larger $N$, as expected.  
	
	\begin{figure}[h]
		\centering
		\includegraphics[width=0.8\linewidth]{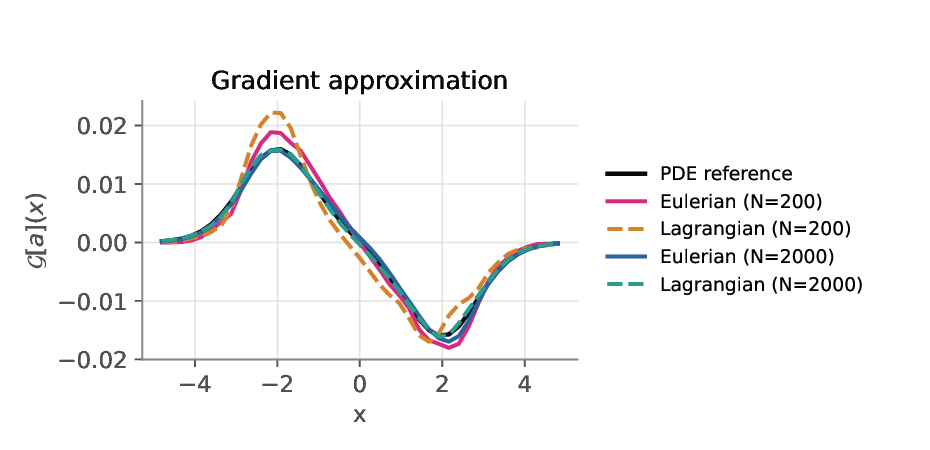}
		\caption{Comparison of the gradient $\mathcal G[a](x)$ and its Eulerian and Lagrangian approximations $\mathcal G^{\mathrm E} [a](x)$ and $\mathcal G^{\mathrm L} [a](x)$ in dimension $d=1$ and under $N\in\{200, 2000\}$ particles. For $2000$, the approximations  almost perfectly overlay  the reference.}
		\label{fig:gradient_1D}
	\end{figure}
	
	\begin{figure}[h]
		
		\hspace{-1cm}\includegraphics[width=1.3\linewidth]{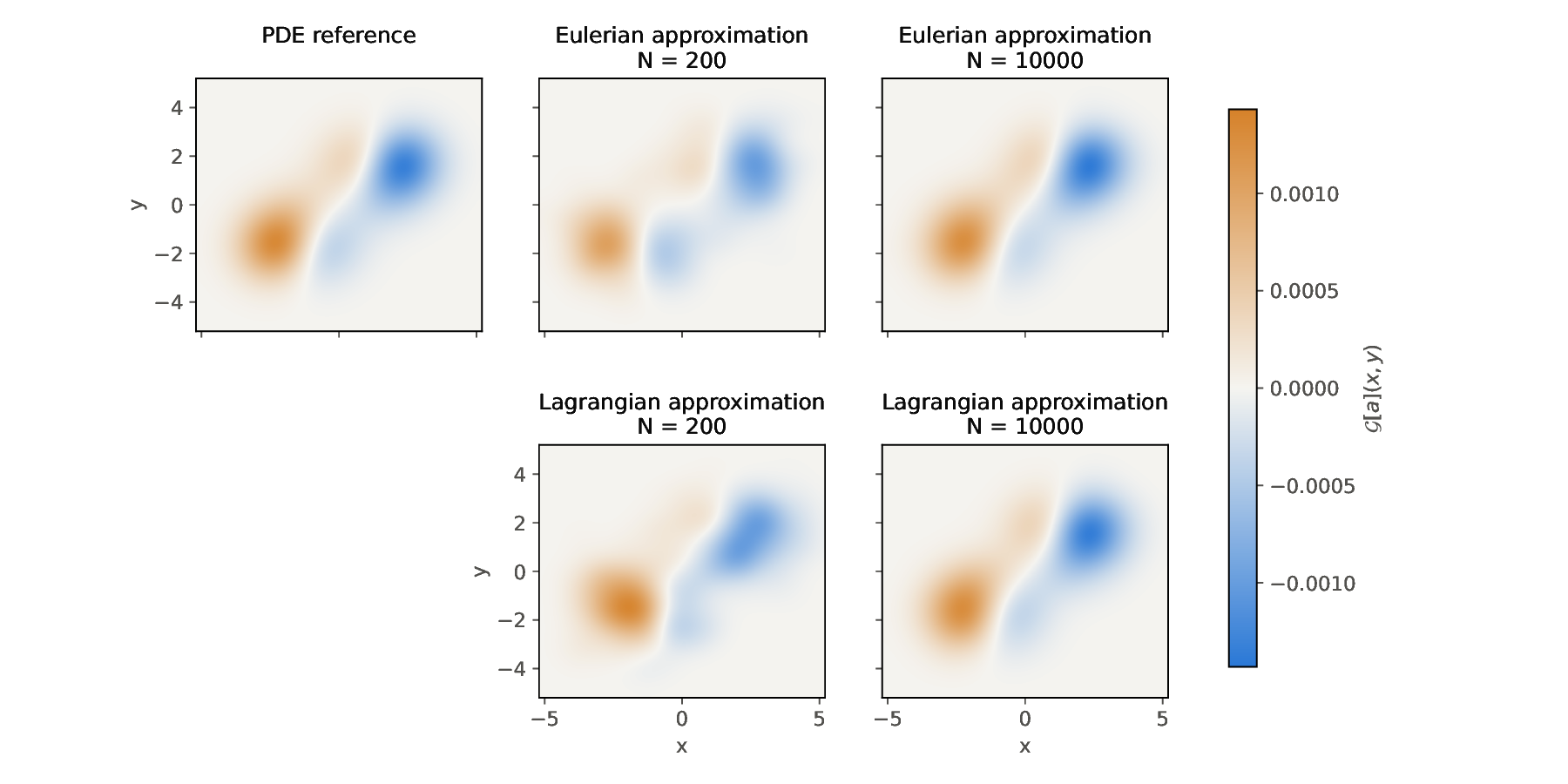}
		\caption{Comparison of the gradient $\mathcal G[a](x,y)$ (upper left) and its Eulerian and Lagrangian approximations $\mathcal G^{\mathrm E}_N [a](x,y)$ (upper row) and $\mathcal G^{\mathrm L}_N [a](x,y)$ (lower row) in dimension $d=2$ and with $N=200$ (center) and $N=10,000$ (right) particles.}
		\label{fig:gradient_2D}
	\end{figure}

	\subsection{Approximation rates}
	In the following, we verify the theoretically predicted convergence rates from \Cref{thm:GmaN}, \Cref{thm:GmiN} and \Cref{prop:error:int,prop:error:EM,prop:error:Lagrange:int,prop:error:Lagrange:EM}. The reference  gradient $\mathcal G[a]$ is computed from  the forward and adjoint Fokker--Planck equations \eqref{eq:FP}--\eqref{eq:adjoint} using the settings described in~\ref{itm:PDEsetting}.
	
	\paragraph{Particle approximation error $\alpha(N)$.}
	To test the particle approximation error, we fix a very small time step $\tau = 2^{-11}$ and sweep $N$ over $2^7,2^8,...,2^{13}$. Errors are evaluated against  $\phi= \phi_2$, averaged over $M=30$ realizations, and reported in \Cref{fig:linear:N_sweep,fig:MMD:N_sweep}. The fitted slopes of regression lines are collected in \Cref{tab:conv-N}. The Lagrangian approximation slopes are close to $-0.5$, even for dimension $d=2$, where our theory predicts the deteriorated convergence rate $\alpha(N) = \mathcal O( N^{-\frac 1 2} \log(1+N))$, which would coincide with a linear slope of $-0.35$ or $-0.32$ in the considered $N$ ranges.  Further investigation would be necessary to distinguish whether this is an artifact of  the noisy nature of the particle methods, or whether the numerical  rate is actually given by the Monte Carlo rate $N^{-1/2}$ across different setups.
	Moreover, we observe  that the Eulerian approximation is more accurate than the Lagrangian approximation in the linear setting in \Cref{fig:linear:N_sweep}, where its adjoint  exactly coincides with the reference adjoint. This advantage is less pronounced for nonlinear objective, as seen in \Cref{fig:MMD:N_sweep}.

	\begin{figure}[h!]
		\centering
		\includegraphics[width=0.5\linewidth]{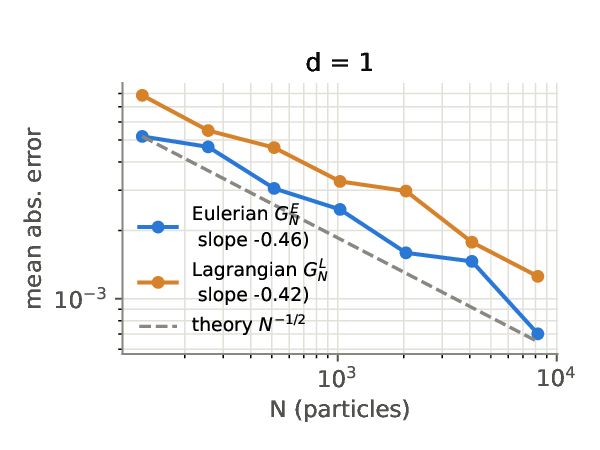}~\includegraphics[width=0.5\textwidth]{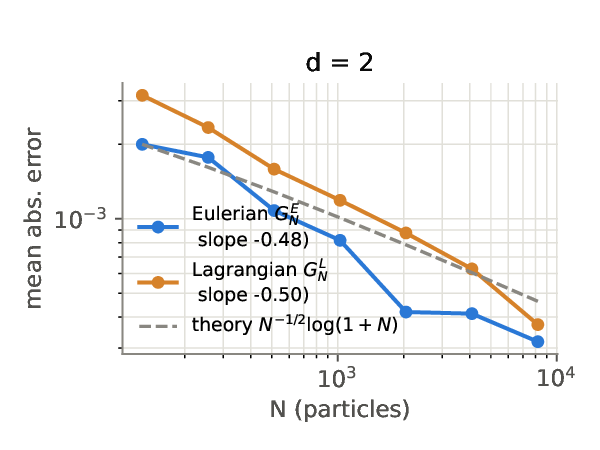}
		\caption{Approximation rates of Eulerian and Lagrangian particle approximations $\mathcal G^{\mathrm E}_N [a]$ and $\mathcal G^{\mathrm L}_N [a]$ in terms of the number of particles $N$, for the linear objective (J1), in dimensions $d=1$ (left) and $d=2$ (right). The dashed line indicates the expected rate according to theory.}
		\label{fig:linear:N_sweep}
	\end{figure}

	\begin{figure}[h!]
		\centering
		\includegraphics[width=0.5\linewidth]{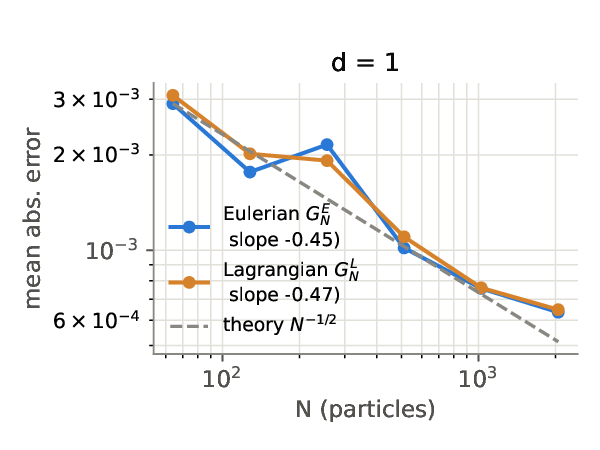}~\includegraphics[width=0.5\textwidth]{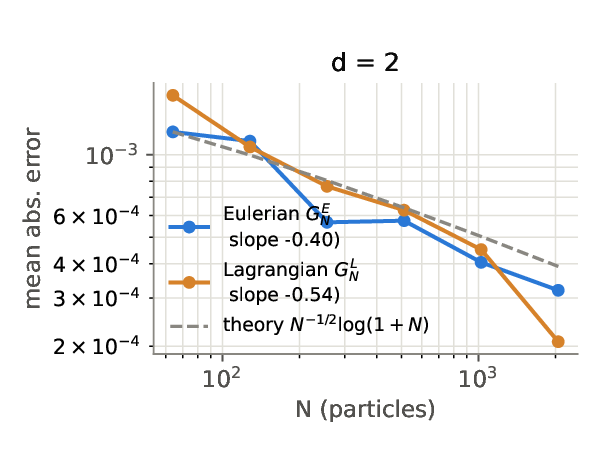}
		\caption{Approximation rates of Eulerian and Lagrangian particle approximations $\mathcal G^{\mathrm E} [a]$ and $\mathcal G^{\mathrm L} [a]$ in terms of the number of particles $N$, for the MMD objective (J2), in dimensions $d=1$ (left) and $d=2$ (right). Mean error over $M=30$ realizations. The dashed line indicates the expected rate according to theory. }
		\label{fig:MMD:N_sweep}
	\end{figure}

	\begin{table}[htbp]
		\centering
		\caption{Empirical $N$-convergence rates at fixed small $\tau = 1/2^{11}$. For $d=2$, the theoretical column displays the
			log-log slope of a line fitted to $\alpha(N)= N^{-\frac 1 2 }\log(1+N)$ over the given $N$ range.}
		\label{tab:conv-N}
		\begin{tabular}{llrrrr}
			\hline
			Objective & $d$ & $N$ range & Theoretical slope & $G_N^E$  & $G_N^L$  \\
			\hline
			Linear (fixed $\psi$)    & 1 & $128$--$8192$ & $-0.500$ & $-0.462$ & $-0.423$ \\
			Linear (fixed $\psi$)    & 2 & $128$--$8192$ & $-0.353$ & $-0.483$ & $-0.496$ \\
			Nonlinear (MMD)          & 1 & $64$--$2048$  & $-0.500$ & $-0.449$ & $-0.465$ \\
			Nonlinear (MMD)          & 2 & $64$--$2048$  & $-0.327$ & $-0.400$ & $-0.542$ \\
			\hline
		\end{tabular}
	\end{table}

	\paragraph{Convergence in high dimensions.} The main motivation for particle methods is their ability to mitigate to the curse of dimensions.  \Cref{fig:LagrangeConv_d10} illustrates convergence of the Lagrangian gradient approximation tested against $\phi_w$ with $w = 2\sqrt{10}$ as the number of particles increases for dimension $d=10$, where a classical grid-based numerical solver for the reference gradient $\mathcal G$ becomes prohibitively expensive.  
	The median stabilizes, while the empirical standard deviation decays approximately with Monte Carlo rate $N^{-1/2}$.

	\begin{figure}
		\centering
		\includegraphics[width=0.5\linewidth]{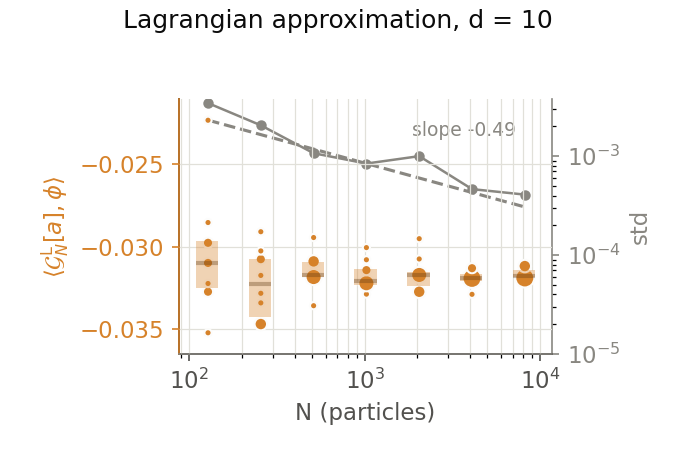}
		\caption{Convergence of $M=10$ realizations of the Lagrangian gradient approximation $\mathcal G^\mathrm{L}_N$ as $N$ grows. Realizations are summarized by dots whose size corresponds to the number of samples it represents. The median is indicated by a brown bar, and the box ranges from the lower to the upper quartile of the samples. The empirical standard deviations of the $M=10$ samples  for each value of  $N$  are plotted in gray, and a fitted dashed line attains slope $-0.49$.
		}
		\label{fig:LagrangeConv_d10}
	\end{figure}
	
	\paragraph{Time discretization error $\tau^{1/2}$.} In order to observe the time discretization error, we use  $N= 2^{15} $ particles. The slopes of fitted regression lines are collected in  \Cref{tab:conv-tau} and all exceed the expected convergence rate $\frac 1 2$.
	
	\begin{table}[htbp]
		\centering
		\caption{Empirical $\tau$-convergence rates at fixed large $N=131072$.}
		\label{tab:conv-tau}
		\begin{tabular}{llrrrr}
			\hline
			Objective & $d$ & $\tau$ range & Theoretical slope & $G_N^E$ & $G_N^L$  \\
			\hline
			Linear (fixed $\psi$) & 1 & $1/16$--$1/512$ & $+0.500$ & $+0.840$ & $+0.687$ \\
			Linear (fixed $\psi$) & 2 & $1/16$--$1/512$ & $+0.500$ & $+0.873$ & $+0.690$ \\
			\hline
		\end{tabular}
	\end{table}
	
	\begin{figure}[h]
		\centering
		\includegraphics[width=0.5\linewidth]{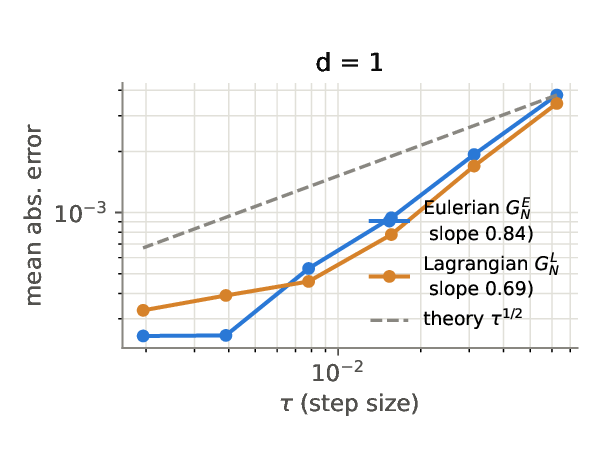}~\includegraphics[width = .5\linewidth]{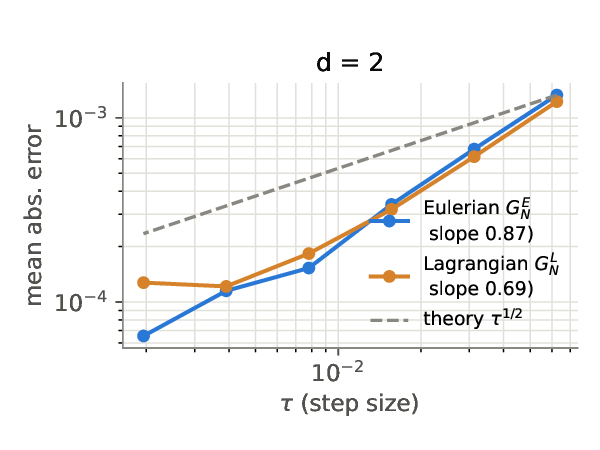}
		\caption{Errors of the Eulerian and Lagrangian numerical particle gradient approximations to $\mathcal G[a]$ for the linear objective (J1) tested against $\phi$, as a function of $\tau$. We choose a large particle number $N=2^{15}$ and errors are averaged over $M=30$ realizations. The dashed line indicates the expected rate according to theory. }
		\label{fig:linear:tau_sweep}
	\end{figure}
	
	\section{Conclusion}
	We investigate two perspectives to construct adjoint solvers for the Fokker--Planck equation, and examine the differences and similarities between Eulerian and Lagrangian formulations. Both formulations provide convergent numerical approximations, and we specify the convergence rates. Our results establish weak convergence, with the error evaluated against a smooth test function. Following the standard Monte Carlo strategy, one should expect a classical $N^{-1/2}$ rate, as also suggested  by the numerical experiments in \Cref{fig:linear:N_sweep,fig:MMD:N_sweep}. Our theory instead supports the dimension-dependent Wasserstein sampling rate $\alpha(N)$, because the gradient is naturally a quadratic quantity and our treatment of the adjoint variable.  We leave the question whether this deteriorated rate is intrinsic to the Monte Carlo approximation of the functional gradient, or merely an artifact of the present analysis, for further investigation.
	Our analysis of the time discretization error generalizes naturally to other choices of numerical discretization schemes for the forward and adjoint particle systems, such as the Milstein scheme. The quadrature error $e_{\mathrm{int}}$, however, emphasizes that temporal regularity of the continuum SDE is a bottleneck that can not easily be overcome. We expect stronger order under weaker notions of convergence. Furthermore, we have so far excluded the PDE discretization error that appears in the computation of $\mu^N$ in the Eulerian gradient approximation from our present analysis. Linearity of the gradient in $\mu$ should allow a direct transfer of this discretization error to the gradient.

	\appendix
		
	\section{A-priori estimates}\label{sec:gman_helper}
		Both the Eulerian and Lagrangian error estimates rely on some a-priori estimates, which we summarize here.
		
		\subsection{Continuum solutions}\label{ssec:apriori:Continuum}
		The forward equation is well-posed under Assumptions~\ref{assumptions:a}, ~\ref{assumptions:rho0}. This follows from standard results \cite{bogachev2022fokker,stroock2007multidimensional} that establish  existence of a probability measure on the path space of the corresponding SDE that translates to a unique measure-valued solution to \eqref{eq:FP}.
		
		\begin{proposition}[Existence of solutions to \eqref{eq:FP}]
			Under Assumptions~\ref{assumptions:a}, ~\ref{assumptions:rho0}, there exists a unique measure-valued solution $\rho\in C([0,T];\mathcal P_q(\mathbb R^d)) $.
		\end{proposition}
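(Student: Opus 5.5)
The plan is to construct the solution probabilistically from the SDE \eqref{eq:process} and then identify its law as the unique measure-valued solution of \eqref{eq:FP}.

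\textbf{Existence and moments.} Under \ref{assumptions:a} the drift $a$ is bounded and globally Lipschitz, because $\nabla a$ is bounded. Standard SDE theory (Picard iteration) therefore gives a unique strong solution $X_t$ of \eqref{eq:process} for any initial condition $X_0\sim\rho_0$ independent of $W$. Set $\rho_t:=\mathrm{Law}(X_t)$. Since $|X_t|\le |X_0|+T\|a\|_\infty+\sqrt2|W_t|$, we get $\sup_{t\le T}\mathbb E|X_t|^q<\infty$ by \ref{assumptions:rho0}, so $\rho_t\in\mathcal P_q$.

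\textbf{Continuity in time.} For $s<t$ we have
\[
\mathbb E|X_t-X_s|\le \|a\|_\infty|t-s|+\sqrt{2}\,\mathbb E|W_t-W_s|\le C\bigl(|t-s|+|t-s|^{1/2}\bigr),
\]
so $W_1(\rho_t,\rho_s)\to0$ as $s\to t$. To get continuity in $W_q$, I would combine this with uniform integrability of $|X_t|^q$. The pointwise bound above shows the $q$-th moments are controlled uniformly in $t$ by $|X_0|^q+\sup_{t\le T}|W_t|^q$, which is an integrable dominating variable. Together these give $\rho\in C([0,T];\mathcal P_q)$.

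\textbf{Weak solution property.} Apply It\^o's formula to $\varphi(X_t)$ for $\varphi\in C_c^2$ and take expectations. This yields
\[
\frac{d}{dt}\int\varphi\,d\rho_t=\int\bigl(a\cdot\nabla\varphi+\Delta\varphi\bigr)\,d\rho_t,
\]
which is exactly the weak form of \eqref{eq:FP}.

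\textbf{Uniqueness.} This is the main obstacle, since uniqueness is needed among all measure-valued solutions, not only among laws of the SDE. I would use a duality argument. For $\psi\in C_c^\infty$ and fixed $s\le T$, solve the backward Kolmogorov equation \eqref{eq:adjoint} on $[0,s]$ with terminal datum $\psi$. Because $a\in C^{1,\theta}_b$, Schauder theory gives a solution $u\in C^{1,2}$ with bounded derivatives, via the same well-posedness that underlies Proposition~\ref{prop:adj:Existence+GrdBounds}. Take two weak solutions with the same initial datum and let $\sigma$ be their difference. Testing $\sigma$ against $u$ (using time-dependent test functions, justified by a cutoff and the finite first moments of $\sigma$) shows that $t\mapsto\int u_t\,d\sigma_t$ is constant. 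Since $\sigma_0=0$, this gives $\int\psi\,d\sigma_s=0$. As $\psi$ and $s$ are arbitrary, $\sigma\equiv0$.

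Alternatively, one can cite the superposition principle / martingale problem uniqueness \cite{bogachev2022fokker,stroock2007multidimensional}. With a bounded Lipschitz drift and nondegenerate constant diffusion, the martingale problem is well-posed. By the superposition principle, every narrowly continuous measure solution of \eqref{eq:FP} is the one-dimensional marginal of a solution of the martingale problem, hence coincides with $\mathrm{Law}(X_t)$.
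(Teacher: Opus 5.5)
Your proposal is correct. The existence half follows the paper's route. The paper gives no detailed argument; it appeals to standard results \cite{bogachev2022fokker,stroock2007multidimensional} that produce a law on path space for the SDE whose time marginals are the unique measure-valued solution, with the $q$-th moment inherited from $\rho_0$ through the boundedness of $a$. Your Picard construction, moment bound and It\^o computation, together with your closing superposition/martingale-problem remark, are a fleshed-out version of exactly that citation.

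Where you genuinely differ is your primary uniqueness argument: duality against solutions of the backward Kolmogorov equation with $C_c^\infty$ terminal data. In this paper it is the more self-contained option, because it only uses the Schauder well-posedness of the backward equation that the paper invokes anyway in Proposition~\ref{prop:adj:Existence+GrdBounds}. That result applies with $\psi\in C_c^\infty\subset C^{2,\theta}_{\mathfrak p}$, and there is no circularity since it does not depend on the forward problem. On the other hand, as you set it up, the duality argument needs the H\"older regularity of $a$. The martingale-problem/superposition route works for merely bounded measurable drift with constant nondegenerate diffusion, at the price of heavier machinery.

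Two minor points. First, the cutoff in the duality step needs only that $\sigma_t$ has bounded total variation and that $a$, $u$, $\nabla u$ are bounded; the extra terms generated by $\chi(\cdot/R)$ are $O(1/R)$, so finite first moments are not needed. Second, continuity in $\mathcal P_q$ follows directly from $W_q(\rho_t,\rho_s)^q\le\mathbb E|X_t-X_s|^q\le C\bigl(|t-s|^q+|t-s|^{q/2}\bigr)$, which makes the uniform-integrability detour unnecessary.
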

		Note that the $q$-th moment bound is inherited from the initial condition by boundedness of the coefficient $a$ and the constant diffusion coefficient. It also implies bounds on all lower moments.
		
		The adjoint equation is also well posed which follows from classical parabolic regularity theory~\cite{oleinik1966mathematical,stroock2007multidimensional, lorenzi2000optimal}.

		\begin{proposition}[Regularity and stability of the adjoint equation]
			\label{prop:adj:Existence+GrdBounds}
			Under Assumptions~\ref{assumptions:a}, ~\ref{assumptions:rho0},~\ref{assumptions:Eulerian}, the backward Kolmogorov equation~\eqref{eq:adjoint}
			admits a unique solution
			\[
			\mu\in C([0,T];C^{2,\theta}_\mathfrak{p}(\mathbb R^d))\cap C^1([0,T];C^{\theta}_\mathfrak{p}(\mathbb R^d)).
			\]
			Moreover, there exists a constant $C$, depending only on $d$, $T$, and the bounds on $a$, such that the following stability estimate holds:
			\[
			\sup_{t\in [0,T]}\|\mu_t\|_{C^{2,\theta}_\mathfrak{p}}
			\le
			C
			\|\mu_T\|_{C^{2,\theta}_\mathfrak{p}},
			\qquad
			0\le t\le T.
			\]
			Furthermore, under additional regularity \ref{assumptions:Euler:additional}, one has 
			\[
			\mu\in C([0,T];C^{3,\theta}_\mathfrak{p}(\mathbb R^d))\cap C^1([0,T];C^{1,\theta}_\mathfrak{p}(\mathbb R^d)).
			\]
		\end{proposition}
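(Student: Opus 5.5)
The plan is to treat \eqref{eq:adjoint} as a terminal-value problem. Substitute $s=T-t$ and set $u(s,x)=\mu(T-s,x)$; then $u$ solves the forward Kolmogorov (Ornstein--Uhlenbeck-type) equation
\[
\partial_s u=\Delta u+a\cdot\nabla u,\qquad u(0,\cdot)=\mu_T=\left.\frac{\delta\mathcal J}{\delta\rho}\right\rvert_{\rho_T}.
\]
The operator is uniformly elliptic, with constant diffusion and drift $a\in C^{1,\theta}_b$ by \ref{assumptions:a}. By \ref{assumptions:Eulerian} the initial datum lies in $C^{2,\theta}_\mathfrak p$, and \ref{assumptions:rho0} together with the preceding existence proposition guarantees $\rho_T\in\mathcal P_q$, so \ref{assumptions:Eulerian} applies to it.

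\textbf{Existence, uniqueness and the regularity class.} I would invoke the Schauder theory for parabolic equations with unbounded (here bounded) coefficients in polynomially weighted H\"older spaces from \cite{lorenzi2000optimal}. The simplest reduction is to set $v=u/\mathfrak p$. Then $v$ solves a parabolic equation with the same principal part and modified lower-order coefficients
\[
a+2\nabla\mathfrak p/\mathfrak p\quad\text{(first order)},\qquad \bigl(\Delta\mathfrak p+a\cdot\nabla\mathfrak p\bigr)/\mathfrak p\quad\text{(zeroth order)},
\]
and these are bounded and $C^{\theta}$ for $\mathfrak p(x)=C(1+|x|^{2m})$. Classical Schauder theory in $C^{2,\theta}_b$ (Oleinik, Stroock, Lorenzi's optimal results) then yields a unique
\[
v\in C([0,T];C^{2,\theta}_b)\cap C^1([0,T];C^\theta_b).
\]
Transforming back gives the claimed class for $\mu$. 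Uniqueness within polynomial growth also follows from the maximum principle, or equivalently from the Feynman--Kac representation $\mu(t,x)=\mathbb E[\mu_T(X_T)\mid X_t=x]$, which uses the finite moments of the SDE with bounded drift.

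\textbf{Stability estimate.} By linearity it suffices to prove the a priori bound $\sup_t\|u_s\|_{C^{2,\theta}_\mathfrak p}\le C\|u_0\|_{C^{2,\theta}_\mathfrak p}$. There are two routes:
\begin{itemize}
\item \emph{Semigroup route.} Since the datum is already $C^{2,\theta}$, the semigroup $P_s$ is bounded on $C^{2,\theta}_b$ uniformly on $[0,T]$, with no singular time weights. For the weighted version, apply this to $v$ as above.
\item \emph{Probabilistic route.} Differentiate the Feynman--Kac formula using the first- and second-order flow Jacobians. This gives $|D^\beta\mu_t|\le C\,\mathbb E[\mathfrak p(X^{t,x}_T)]\le C'\mathfrak p(x)$ for $|\beta|\le2$. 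The H\"older seminorm is then handled by interpolating against one more derivative, or by the Schauder estimate.
\end{itemize}
The constant depends only on $d$, $T$, $\|a\|_{C^{1,\theta}}$, and on $\mathfrak p$ through the commutator terms. Applying the estimate to the difference $\mu-\mu'$ gives the stability used in \Cref{prop:grad:continuity}.

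\textbf{Additional regularity.} Under \ref{assumptions:Euler:additional} the datum is $C^{3,\theta}_\mathfrak p$. I would differentiate the equation once in $x$: $w=\partial_j u$ solves
\[
\partial_s w=\Delta w+a\cdot\nabla w+\partial_j a\cdot\nabla u .
\]
The source $\partial_j a\cdot\nabla u$ is only $C^\theta$ in $x$ because $a\in C^{1,\theta}$, and that is exactly enough for inhomogeneous Schauder theory to give $w\in C([0,T];C^{2,\theta}_\mathfrak p)$. Hence $\mu\in C([0,T];C^{3,\theta}_\mathfrak p)$. The time regularity $\partial_t\mu=-(\Delta\mu+a\cdot\nabla\mu)\in C([0,T];C^{1,\theta}_\mathfrak p)$ then follows directly from the equation.

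\textbf{Main obstacle.} I expect the hardest step to be the weighted framework: verifying that the polynomial weight commutes with the operator up to bounded H\"older lower-order terms, and that the estimates are uniform up to $t=T$. I would first try the substitution $v=u/\mathfrak p$ and rely on the bounded-coefficient Schauder estimates, falling back on the results of \cite{lorenzi2000optimal} if the weight's derivatives cause trouble.
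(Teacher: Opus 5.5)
Your proposal is correct and is essentially the paper's argument. The paper gives nothing beyond an appeal to classical (weighted) parabolic Schauder theory \cite{oleinik1966mathematical,stroock2007multidimensional,lorenzi2000optimal}. Your conjugation $v=u/\mathfrak p$ (legitimate for weights like $C(1+|x|^{2m})$, whose derivatives are bounded by multiples of $\mathfrak p$), together with the differentiated equation for $\partial_j u$, is a sound way to make that appeal concrete, and you correctly note that the constant also depends on $\mathfrak p$.

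One caveat, inherited from the statement itself: Schauder theory gives $\sup_t\|\mu_t\|_{C^{2,\theta}_\mathfrak p}\le C\|\mu_T\|_{C^{2,\theta}_\mathfrak p}$ and continuity in time only into $C^{2}_\mathfrak p$ (or $C^{2,\theta'}_\mathfrak p$, $\theta'<\theta$). It does not give continuity into $C^{2,\theta}_\mathfrak p$ at $t=T$ unless $\mu_T$ lies in the little H\"older space; for the heat equation with $\partial_x^2u_0=|x|^\theta$ near $0$, scaling shows $[\partial_x^2(e^{s\Delta}u_0-u_0)]_{C^\theta}\not\to0$. So the memberships $\mu\in C([0,T];\cdot)$ and $\partial_t\mu\in C([0,T];\cdot)$ should be read with $L^\infty$ in time, which is all the stability estimate and its later uses require.
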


		\Cref{thm:GmaN} can be considered as a corollary of~\Cref{prop:grad:continuity} once a Monte Carlo convergence rate is available. This is a consequence of the classical Wasserstein-1 estimate of~\cite{fournier2015rate}, and the fact that $\rho_t = \operatorname{Law}(X_t)$:
		\begin{theorem}\label{thm:samplingRho}
			Let $\rho$ be the solution of \eqref{eq:FP}  and $\rho^N$ the empirical distribution of $N$ i.i.d. particles satisfying \eqref{eq:particle},  under Assumptions~\ref{assumptions:a}, ~\ref{assumptions:rho0}. Then, there exists a constant $C$ that depends on $d$ and the $q$-th moment of $\rho_T$, such that 
			\begin{align*}
				\E[ W_1(\rho_T, \rho^N_T)] \leq& \  C \alpha(N).
			\end{align*}
		\end{theorem}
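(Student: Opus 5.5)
The plan is to reduce the statement to the Fournier--Guillin rate for i.i.d. samples. The first step is to observe that, by construction in \eqref{eq:particle}, the particles $X_{i,T}$, $i=1,\dots,N$, are driven by independent Brownian motions $W_{i}$ and start from i.i.d. initial data $X_{i,0}\sim\rho_0$. They are therefore i.i.d. copies of the process $X$ in \eqref{eq:process}. Under \ref{assumptions:a} the SDE has a unique strong solution, because $a$ is bounded and globally Lipschitz. Hence each $X_{i,T}$ is a measurable function of $(X_{i,0},W_i)$, and the family $\{X_{i,T}\}_i$ is i.i.d. with common law $\mathrm{Law}(X_T)=\rho_T$. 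This identification uses the existence and uniqueness proposition for \eqref{eq:FP}: the law of the SDE solution is the unique measure-valued solution of the Fokker--Planck equation. Consequently $\rho^N_T$ is exactly the empirical measure of $N$ i.i.d. samples from $\rho_T$.

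The second step is to verify the moment hypothesis of \cite{fournier2015rate}, namely that $\rho_T$ has a finite $q$-th moment with the same $q$ as in \ref{assumptions:rho0}. I would write
\[
X_T = X_0+\int_0^T a(X_s)\d s+\sqrt2 W_T .
\]
Since $|a|\le\|a\|_\infty$, this gives
\[
\mathbb E|X_T|^q\le 3^{q-1}\left(\mathbb E|X_0|^q+(T\|a\|_\infty)^q+2^{q/2}\mathbb E|W_T|^q\right)<\infty .
\]
Each term on the right is finite by \ref{assumptions:rho0}, the boundedness of $a$, and Gaussian moments respectively.

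The third step is to invoke \cite[Thm.~1]{fournier2015rate} with $p=1$. For $\mu\in\mathcal P_q$ and $q>p$ it gives
\[
\mathbb E W_1(\mu,\mu_N)\le C M_q^{1/q}\bigl(N^{-1/2}\mathbb 1_{d=1}+N^{-1/2}\log(1+N)\mathbb 1_{d=2}+N^{-1/d}\mathbb 1_{d>2}\bigr)+C M_q^{1/q}N^{-(q-1)/q}.
\]
The main obstacle I expect is the bookkeeping of the second, moment-dependent term $N^{-(q-1)/q}$. It must be dominated by $\alpha(N)$, and I would check this case by case. For $d\ge3$ with $q=2$ we have $N^{-1/2}\le N^{-1/d}$. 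For $d=1,2$ the requirement $q>2$ gives $(q-1)/q>1/2$, so the term is $o(N^{-1/2})$. This is exactly why \ref{assumptions:rho0} asks for $q>2$ when $d\le2$. There is a borderline case $d=2$, $q=2$ in the original statement where an extra condition is needed, and the assumption $q>2$ sidesteps it. Collecting these bounds, the constant depends only on $d$, $q$ and $M_q(\rho_T)$, which completes the argument.
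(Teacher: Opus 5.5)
Your proposal is correct and follows the paper's route. The paper simply invokes the Fournier--Guillin estimate together with $\rho_T=\mathrm{Law}(X_T)$, and your moment propagation via $\|a\|_\infty$ and your absorption of the $N^{-(q-1)/q}$ term into $\alpha(N)$ supply the details the paper leaves implicit.

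One small correction: for $p=1$, Fournier--Guillin's excluded borderline $q=2p=2$ concerns $d=1$ as well as $d=2$, and for $d\ge 3$ the excluded value is $q=d/(d-1)<2$. The choices of $q$ in (A2) avoid all of these, so your argument stands.
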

		
		\subsection{Particle systems}\label{ssec:apriori:particle}
		Standard theory \cite{oksendal2013stochastic, stroock2007multidimensional} 
		ensures existence of the solution to the particle dynamics SDE \eqref{eq:process}, and temporal H\"older regularity of the paths:
		\begin{proposition}\label{prop:Particles:Existence+Bounds}
			
			Under Assumptions~\ref{assumptions:a}, ~\ref{assumptions:rho0}, the SDE \eqref{eq:process} has a unique solution. Its paths are   H\"older continuous: for every $0<p\leq q$, there exists a constant $C_X$ that depends on  $\|a\|_\infty$, $p$, and $T$, such that, for $0\leq r\leq t\leq T$,
			$$\E[\sup_{s\in [r,t]}\left\lvert X_s -X_r\right\rvert^p] \leq C_X(t-r)^{p/2}.$$
		\end{proposition}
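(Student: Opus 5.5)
Existence and uniqueness of a strong solution to \eqref{eq:process} follow from the classical Itô theory. By \ref{assumptions:a}, $a\in C^{1,\theta}_b$, so $a$ is bounded and globally Lipschitz with constant $\|\nabla a\|_\infty$. The diffusion coefficient $\sqrt2\,\mathrm{Id}$ is constant, and the initial datum $X_0\sim\rho_0$ is independent of $W$ with finite $q$-th moment by \ref{assumptions:rho0}. Under these conditions Picard iteration in $L^2(\Omega;C([0,T]))$ gives a unique continuous adapted solution, exactly as in \cite{oksendal2013stochastic}. We take this part as standard.

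For the Hölder estimate, write the increment using the integral form of the SDE:
\[
X_s-X_r=\int_r^s a(X_u)\d u+\sqrt2\,(W_s-W_r),\qquad s\in[r,t].
\]
Taking the supremum over $s\in[r,t]$ and using $|x+y|^p\le 2^{(p-1)_+}(|x|^p+|y|^p)$ gives
\[
\sup_{s\in[r,t]}|X_s-X_r|^p\le 2^{(p-1)_+}\Bigl(\|a\|_\infty^p (t-r)^p+2^{p/2}\sup_{s\in[r,t]}|W_s-W_r|^p\Bigr).
\]
The drift term is bounded pathwise because $a$ is bounded. For the Brownian term, Brownian scaling and the strong Markov property give
\[
\E\sup_{s\in[r,t]}|W_s-W_r|^p=(t-r)^{p/2}\,\E\sup_{u\in[0,1]}|W_u|^p.
\]
The expectation on the right is finite for every $p>0$. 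For $p\ge1$ this follows from Doob's maximal inequality or Burkholder--Davis--Gundy; for $p<1$ it follows from Jensen's inequality applied to the $p=1$ case.

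To combine the two terms, note that $t-r\le T$, so $(t-r)^p=(t-r)^{p/2}(t-r)^{p/2}\le T^{p/2}(t-r)^{p/2}$. Collecting everything yields
\[
\E\Bigl[\sup_{s\in[r,t]}|X_s-X_r|^p\Bigr]\le C_X(t-r)^{p/2},
\qquad C_X=2^{(p-1)_+}\bigl(\|a\|_\infty^pT^{p/2}+2^{p/2}c_{p,d}\bigr),
\]
where $c_{p,d}=\E\sup_{u\in[0,1]}|W_u|^p$. This constant depends only on $\|a\|_\infty$, $p$, $T$ and the dimension $d$, as the statement requires.

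No step is a genuine obstacle. The only point needing care is the maximal moment of Brownian motion for $p<1$, which Jensen's inequality settles. The restriction $p\le q$ is not needed for the increments, since $X_0$ cancels. It matters only for the moment bounds on $X_s$ themselves, which are inherited from $\rho_0$ by the same splitting.
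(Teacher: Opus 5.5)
Your proof is correct and follows essentially the same route as the paper: split the increment into the drift part, bounded pathwise by $\|a\|_\infty (t-r)$, and the Brownian part, controlled by a maximal moment bound, then absorb $(t-r)^p \le T^{p/2}(t-r)^{p/2}$. The paper applies Burkholder--Davis--Gundy directly on $[r,t]$, whereas you use Brownian scaling together with Doob/BDG and Jensen for $p<1$; your remarks that $p\le q$ is not needed for the increments and that $C_X$ also depends on $d$ are accurate.
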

		\begin{proof}
			Existence is standard.  H\"older continuity is an immediate consequence of the Burkholder--Gundy--Davis inequality
			\cite{karatzas1998methods}. In particular, from the evolution we obtain 
			\begin{align*}
				\E[\sup_{s\in [r,t]}\left\lvert X_s -X_r\right\rvert^p] & =\E\left[\sup_{s\in [r,t]}\left\lvert \int_r^s a(X_u)\d u + \sqrt{2} (W_u-W_r) \right\rvert ^p\right] \\
				&\leq C_p(\|a\|_\infty^p (t-r)^p + 2^{\frac p 2} C_{BGD}(t-r)^{\frac p 2 }) &\leq C_X (t-r)^{\frac p 2}.
			\end{align*}
		\end{proof}
		
		The classical bound on the  Wasserstein distance  in terms of particle differences 
		\begin{equation}\label{eq:Wasserstein:particles}
			W_1(\rho^N_t, \rho^N_s) = \inf_{\pi \in S^N} \frac 1 N \sum_n\left\lvert X_t^n - X_s^{\pi(n)}\right\rvert\leq  \frac 1 N \sum_n\left\lvert X_t^n - X_s^{n}\right\rvert
		\end{equation}
		then implies H\"older continuity of the empirical particle distribution.
		\begin{corollary}\label{cor:rate:time_particles}
			Under Assumptions~\ref{assumptions:a}, ~\ref{assumptions:rho0}, the empirical distribution function $\rho^N$ of  $N$ of the particles in \eqref{eq:particle} is H\"older continuous with exponent $\frac 1 2 $ w.r.t. the Wasserstein-1 metric:
			\begin{equation}\label{eq:rate:SDEHoelder}
				\E[\sup_{s\in [r,t]}W_1(\rho^N_r, \rho^N_s)]\leq C_X(t-r)^{\frac 1 2}
			\end{equation}
			for the constant $C_X$ from \Cref{prop:Particles:Existence+Bounds} that depends only on $\|a\|_\infty$ and $T$.
		\end{corollary}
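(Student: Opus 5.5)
The corollary to be proved is \Cref{cor:rate:time_particles}. The plan is to reduce the Wasserstein distance between two empirical measures to an average of single-particle displacements, and then apply the pathwise Hölder estimate of \Cref{prop:Particles:Existence+Bounds} with $p=1$.

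First, for fixed $s\in[r,t]$, use the trivial coupling in \eqref{eq:Wasserstein:particles}, which pairs particle $n$ at time $r$ with the same particle at time $s$. This gives
\[
W_1(\rho^N_r,\rho^N_s)\le \frac1N\sum_{n=1}^N \left\lvert X^n_s-X^n_r\right\rvert .
\]
Next, take the supremum over $s\in[r,t]$ on both sides. A supremum of a sum is bounded by the sum of the suprema, so
\[
\sup_{s\in[r,t]}W_1(\rho^N_r,\rho^N_s)\le \frac1N\sum_{n=1}^N \sup_{s\in[r,t]}\left\lvert X^n_s-X^n_r\right\rvert .
\]
This random variable is measurable: the particle paths are continuous, so the supremum may be taken over rational $s$.

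Then take expectations. Each particle $X^n$ solves \eqref{eq:process} with the same law, so every summand has the same expectation, and \Cref{prop:Particles:Existence+Bounds} with $p=1\le q$ applies to each. This yields
\[
\E\Big[\sup_{s\in[r,t]}W_1(\rho^N_r,\rho^N_s)\Big]\le \E\Big[\sup_{s\in[r,t]}\lvert X_s-X_r\rvert\Big]\le C_X(t-r)^{1/2}.
\]
The constant depends only on $\|a\|_\infty$ and $T$, and in particular not on $N$.

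There is no serious obstacle. The one point to check is that $p=1$ is admissible in \Cref{prop:Particles:Existence+Bounds}. It is: that proposition holds for all $0<p\le q$, and its proof only uses the Burkholder--Davis--Gundy inequality, which holds for $p=1$ as well. The resulting exponent is $(t-r)^{p/2}=(t-r)^{1/2}$, as claimed.
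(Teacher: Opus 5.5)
Your proposal is correct and follows the paper's argument: the paper also bounds $W_1(\rho^N_r,\rho^N_s)$ via the identity coupling in \eqref{eq:Wasserstein:particles} and then applies \Cref{prop:Particles:Existence+Bounds} with $p=1$ to each particle. Your explicit handling of the supremum (bounding it by the sum of suprema, with measurability from path continuity) spells out steps the paper leaves implicit.
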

		
		As for the adjoint process \eqref{eq:process_adjoint} and particle system \eqref{eq:adjoint_particle}, standard ODE theory guarantees path wise existence of a solution, and its stability w.r.t. the final data. 
		\begin{proposition}\label{prop:adjParticles:Existence+Bounds}
			Let Assumptions~\ref{assumptions:a}, ~\ref{assumptions:rho0},~\ref{assumptions:Lagrangian} hold and consider the adjoint ODE \eqref{eq:process_adjoint}  with fixed $X\in C([0,T]; \mathbb R^d)$ and final condition $\xi\in \mathbb R^d$. It admits a  unique  solution  $Y\in C^1([0,T]; \mathbb R^d)$  and there exists a constant $C_Y'>0$ depending only on $a$ and $T$, such that, 
			for every $0\leq t\leq T$,  
			\begin{equation}\label{eq:Ybound}
				\left\lvert Y_t\right\rvert
				\leq
				C_Y'
				\left\lvert \xi\right\rvert.
			\end{equation}
		\end{proposition}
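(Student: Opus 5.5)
The statement is a pathwise linear ODE estimate, so I would argue deterministically for a fixed continuous path $X\in C([0,T];\mathbb R^d)$ and terminal value $\xi$. \textbf{Existence and uniqueness.} Equation~\eqref{eq:process_adjoint} reads $\frac{d}{dt}Y_t = -\nabla a(X_t)^\top Y_t$ with $Y_T=\xi$. It is linear in $Y$. By~\ref{assumptions:a}, $\nabla a$ is bounded and continuous, so $t\mapsto \nabla a(X_t)$ is continuous on $[0,T]$ with $\sup_t|\nabla a(X_t)|\le \|\nabla a\|_\infty$. The right-hand side is therefore continuous in $t$ and globally Lipschitz in $Y$, with constant $\|\nabla a\|_\infty$ uniformly in $t$. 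Reversing time via $s=T-t$ turns the problem into a forward initial value problem, and Picard--Lindel\"of gives a unique global solution. Since the right-hand side is continuous in $t$, that solution lies in $C^1([0,T];\mathbb R^d)$.

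\textbf{Bound~\eqref{eq:Ybound}.} I would apply Gr\"onwall backward in time. Writing the solution in integral form,
\[
Y_t=\xi+\int_t^T \nabla a(X_s)^\top Y_s\d s,
\]
and taking norms gives
\[
|Y_t|\le |\xi|+\|\nabla a\|_\infty\int_t^T|Y_s|\d s .
\]
The backward Gr\"onwall inequality, which is the usual one applied to $u(s)=|Y_{T-s}|$, then yields
\[
|Y_t|\le e^{\|\nabla a\|_\infty (T-t)}|\xi|\le e^{\|\nabla a\|_\infty T}|\xi|.
\]
So the claim holds with $C_Y'=e^{T\|\nabla a\|_\infty}$, which depends only on $a$ and $T$, and in particular not on $X$ or $\xi$.

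The same linearity, applied to the difference of two solutions with terminal values $\xi,\xi'$, gives the stability bound $|Y_t-Y'_t|\le C_Y'|\xi-\xi'|$. This is the form used later in \Cref{lem:tildeY}. There is no real obstacle here; the only point to check is that the bound is uniform over all continuous paths. That holds because the only path-dependent quantity, $\nabla a(X_t)$, is controlled by $\|\nabla a\|_\infty$, which is independent of $X$. Assumptions~\ref{assumptions:rho0} and~\ref{assumptions:Lagrangian} are not needed for this deterministic statement. They matter only when $\xi$ is random, where they ensure that moments of $Y$ are finite.
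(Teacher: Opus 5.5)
Your proposal is correct and follows essentially the same route as the paper: write the adjoint ODE in integral form, bound $|Y_t|\le|\xi|+\|\nabla a\|_\infty\int_t^T|Y_s|\d s$, and apply Gr\"onwall to get $C_Y'=e^{T\|\nabla a\|_\infty}$. The only cosmetic difference is that you get global existence directly from the uniform Lipschitz bound of this linear ODE, while the paper uses local Picard--Lindel\"of existence together with the Gr\"onwall bound and a non-explosion extension; both arguments are valid.
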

		By linearity of  \eqref{eq:process_adjoint}, this bound gives stability w.r.t. the final condition.
		\begin{proof}
			Existence on a short time interval follows from Picard-Lindel\"of. 
			For these short times,  the dynamics \eqref{eq:process_adjoint} suggest the implicit bound
			\[
			\left\lvert Y_t\right\rvert
			\leq
			\left\lvert \xi\right\rvert
			+
			\int_t^T
			\|\nabla a(X_s)\|\,\left\lvert Y_s\right\rvert\,\mathrm ds\leq \left\lvert \xi\right\rvert
			+
			\|a\|_{C^1}\int_t^T
			\,\left\lvert Y_s\right\rvert\,\mathrm ds.
			\]
			Gronwall's lemma turns this into the explicit bound \eqref{eq:Ybound}. Global existence  then follows  by  extension on the basis of the non-explosion criterion. 
		\end{proof}
		
		\begin{corollary}\label{cor:adjoint:MomentBd+Lipschitz}
			Under Assumptions~\ref{assumptions:a}, ~\ref{assumptions:rho0},~\ref{assumptions:Lagrangian}
			the adjoint ODE \eqref{eq:adjoint_particle} admits a pathwise unique solution and there exists a $C_Y>0$, independent of $t,s\in [0,T]$ such that
			\begin{enumerate}[(a)] 
				\item $\mathbb E\left\lvert Y_t\right\rvert^2 \leq C_Y$,
				\item $\E \left\lvert Y_t - Y_s\right\rvert\leq C_Y\left\lvert t-s\right\rvert$.
			\end{enumerate}
		\end{corollary}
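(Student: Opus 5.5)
The plan is to derive both bounds from the pathwise estimate of \Cref{prop:adjParticles:Existence+Bounds}, applied to the adjoint ODE \eqref{eq:adjoint_particle} (equivalently \eqref{eq:process_adjoint}) with terminal value $\xi = Y_{T} = \nabla\frac{\delta\mathcal J}{\delta\rho}\big|_{\nu}(X_T)$, where $\nu=\rho_T^N$ for the particle system and $\nu=\rho_T$ for the continuum process. Existence and pathwise uniqueness follow directly from that proposition, because for each fixed continuous path $X$ the equation is a linear ODE with bounded continuous coefficient $\nabla a(X_s)$. By \ref{assumptions:a}, $\|\nabla a\|_\infty<\infty$.

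For part (a), square the bound \eqref{eq:Ybound} and take expectations:
\[
\mathbb E|Y_t|^2\le (C_Y')^2\,\mathbb E\Big|\nabla\tfrac{\delta\mathcal J}{\delta\rho}\big|_{\nu}(X_T)\Big|^2 .
\]
For the continuum case with $\nu=\rho_T=\mathrm{Law}(X_T)$, the right-hand side is $(C_Y')^2\|\nabla\frac{\delta\mathcal J}{\delta\rho}|_{\rho_T}\|_{L^2_{\rho_T}}^2$. This is finite by \ref{assumptions:Lagrangian} and does not depend on $t$.

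For the particle case with $\nu=\rho_T^N$, the measure is random and correlated with $X_{i,T}$. I would pivot through $\rho_T$ using the sup-norm Lipschitz bound in \ref{assumptions:Lagrangian}:
\[
\Big|\nabla\tfrac{\delta\mathcal J}{\delta\rho}\big|_{\rho^N_T}(x)\Big|\le \Big|\nabla\tfrac{\delta\mathcal J}{\delta\rho}\big|_{\rho_T}(x)\Big| + C_J W_1(\rho_T,\rho_T^N).
\]
Squaring gives a term $2C_J^2\,\mathbb E W_1(\rho_T,\rho_T^N)^2$. This is bounded uniformly in $N$ by $W_1(\rho_T,\rho^N_T)\le \int|x|d\rho_T+\frac1N\sum_i|X_{i,T}|$ together with the second-moment bound from \ref{assumptions:rho0}, which the forward dynamics propagate since $a$ is bounded. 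This is the main delicate point: the constant must be uniform in $N$. Resolving it needs the $\|\cdot\|_\infty$ form of \ref{assumptions:Lagrangian} together with the moment bounds. If only part (a) for $\widetilde Y$ is needed later, as in \Cref{lem:tildeY}, the continuum case suffices.

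For part (b), integrate the ODE:
\[
Y_t-Y_s=-\int_s^t\nabla a(X_r)^\top Y_r\,dr,
\]
so that
\[
|Y_t-Y_s|\le \|\nabla a\|_\infty\,|t-s|\,\sup_r|Y_r|\le \|\nabla a\|_\infty C_Y'|t-s|\,|\xi|.
\]
Taking expectations and using Cauchy--Schwarz, $\mathbb E|\xi|\le(\mathbb E|\xi|^2)^{1/2}$, which is bounded exactly as in part (a). Finally, enlarge $C_Y$ to the maximum of the two constants obtained, which gives both estimates with a single $C_Y$ independent of $t,s$.
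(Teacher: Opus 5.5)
Your proposal is correct and follows the paper's argument: the pathwise Gronwall bound of \Cref{prop:adjParticles:Existence+Bounds}, combined with the square integrability in \ref{assumptions:Lagrangian}, gives (a), and integrating the ODE and using that moment bound gives (b). Your pivot through $\rho_T$ for the random terminal measure $\rho_T^N$, via the sup-norm Lipschitz bound and the $N$-uniform bound on $\mathbb E[W_1(\rho_T,\rho_T^N)^2]$, is sound and makes explicit a case the paper leaves implicit, since its remark ties $C_Y$ only to $\|\nabla\frac{\delta\mathcal J}{\delta\rho}|_{\rho_T}\|_{L^2_{\rho_T}}$.
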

		
		Note that $C_Y$  depends on  $\|\grd \frac {\delta \mathcal J}{\delta \rho_T}\rvert_{\rho_T}\|_{L^2_{\rho_T}}$ and thus $\rho_T$ through the final condition.
		
		\begin{proof}
			Pathwise existence and stability w.r.t. the final condition follow from the previous lemma. 
			The second moment bound  then directly follows from Assumption~\ref{assumptions:a}, ~\ref{assumptions:rho0},~\ref{assumptions:Lagrangian},  and  implies Lipschitz continuity in time:
			$$\E \left\lvert Y_t - Y_s\right\rvert \leq \E\left\lvert \int_s^t -Y_r \grd a(X_r)\d r \right\rvert  \leq \|\grd a\|_{\infty} \int_s^t \E \left\lvert Y_r\right\rvert\d r. $$
		\end{proof}
		
		\subsection{Time discretization}\label{ssec:apriori:timedisc}
		The convergence rate of the Euler-Maruyama scheme is established in standard texts such as \cite{kloeden1992stochastic}, and we repeat it here for reference.
		\begin{proposition}[\cite{kloeden1992stochastic}]\label{prop:EM:Rate}
			Let $X_t$ be the solution of \eqref{eq:process} and $X_k^\tau$ the solution of the Euler-Maruyama scheme \eqref{eq:EMScheme}. Under Assumptions \ref{assumptions:a},\ref{assumptions:rho0},
			there exists a constant $C_{EM}$ such that 
			\begin{equation}\label{eq:EM:Rate}
				\sup_{k} \mathbb E\left\lvert X_{t_k} - X^\tau_k\right\rvert^2\leq C_{EM}^2\tau.\end{equation}
		\end{proposition}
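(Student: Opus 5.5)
The statement just above is the classical strong convergence bound for Euler--Maruyama, $\sup_k \mathbb E|X_{t_k}-X^\tau_k|^2\le C_{EM}^2\tau$. I would prove it by a discrete Gronwall argument on the error $e_k := X_{t_k}-X^\tau_k$, using the same Brownian increments in both processes. I would also use the same initial value $X_0$, so that $e_0=0$.

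\textbf{Step 1: error recursion.} Integrating \eqref{eq:process} over $[t_k,t_{k+1}]$ and subtracting \eqref{eq:EMScheme}, where $\sqrt{2\tau}\,\zeta_k=\sqrt2(W_{t_{k+1}}-W_{t_k})$, the noise cancels exactly because it is additive. This gives
\[
e_{k+1}=e_k+\tau\bigl(a(X_{t_k})-a(X^\tau_k)\bigr)+\int_{t_k}^{t_{k+1}}\bigl(a(X_s)-a(X_{t_k})\bigr)\d s .
\]
Call the last integral $r_k$. By \ref{assumptions:a}, $a$ is globally Lipschitz with constant $L=\|\nabla a\|_\infty$. Hence the middle term is bounded by $L\tau|e_k|$, and $|r_k|\le L\int_{t_k}^{t_{k+1}}|X_s-X_{t_k}|\d s$.

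\textbf{Step 2: local error.} By \Cref{prop:Particles:Existence+Bounds} with $p=2$ (allowed since $q\ge2$ by \ref{assumptions:rho0}) and Jensen's inequality in time,
\[
\mathbb E|r_k|^2\le L^2\tau\int_{t_k}^{t_{k+1}}\mathbb E|X_s-X_{t_k}|^2\d s\le L^2C_X\tau^3 .
\]
Only the boundedness of $a$ and the additivity of the noise enter here.

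\textbf{Step 3: Gronwall.} The naive route is to square the recursion and use $(x+y)^2\le(1+\tau)x^2+(1+\tau^{-1})y^2$. This yields
\[
\mathbb E|e_{k+1}|^2\le(1+\tau)(1+L\tau)^2\,\mathbb E|e_k|^2+(1+\tau^{-1})L^2C_X\tau^3 .
\]
The local contribution is then $O(\tau^2)$ per step. Iterating over $K=T/\tau$ steps with the factor $e^{(1+3L)T}$ gives $\sup_k\mathbb E|e_k|^2\le C\tau$, which is exactly the claimed rate with $C_{EM}^2=e^{(1+3L)T}(1+3L)^{-1}L^2C_X\cdot 2$ or a similar constant.

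\textbf{Main obstacle.} The delicate point is not losing a power of $\tau$ when summing the local errors. A crude triangle inequality on $\sqrt{\mathbb E|e_k|^2}$ would also work: each step contributes $O(\tau^{3/2})$, and summing $K$ of them gives $\tau^{1/2}$. The weighted Young inequality above is chosen precisely to keep the cumulative sum at $K\cdot\tau^2=O(\tau)$. No Itô-isometry or martingale cancellation is needed, since the $\tau^{1/2}$ rate is all that is claimed. Uniformity in $k$ comes for free because the Gronwall factor is bounded by $e^{(1+3L)T}$ independently of $\tau\le1$.
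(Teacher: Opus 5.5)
Your argument is correct, but it is not the paper's route: the paper gives no proof and imports the general strong-convergence theorem of \cite{kloeden1992stochastic}, which also covers state-dependent diffusion coefficients.

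\textbf{Where the two differ.} In the textbook setting the error contains the stochastic integral $\int_{t_k}^{t_{k+1}}(\sigma(X_s)-\sigma(X^\tau_k))\,dW_s$. Its local part is only $O(\tau)$ in $L^2$. Neither your triangle-inequality bookkeeping nor your weighted-Young bookkeeping would survive summation of such terms over $K=T/\tau$ steps. The textbook proof therefore relies on the It\^o isometry, i.e.\ orthogonality of martingale increments, so that cross terms carry no $\tau^{-1}$ penalty. It then uses Doob's inequality and closes with Gronwall.

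You instead exploit that the noise in \eqref{eq:process} is additive. With shared increments and $e_0=0$ it cancels identically. What remains is a noise-free recursion whose only probabilistic input is the $L^2$ H\"older bound of \Cref{prop:Particles:Existence+Bounds}.

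\textbf{What your route buys.} It is more elementary and self-contained within the appendix. It also shows that only boundedness and Lipschitz continuity of $a$ matter. For bounded $a$ the increment bound does not even use \ref{assumptions:rho0}. The price is that it does not extend to multiplicative noise, which the paper never needs.

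\textbf{A small bonus.} The inequality $|e_{k+1}|\le(1+L\tau)|e_k|+|r_k|$ holds pathwise. Hence $\sup_k|e_k|\le e^{LT}\sum_{j<K}|r_j|$, and the triangle inequality in $L^2$ gives $\mathbb E\sup_k|e_k|^2\le e^{2LT}T^2L^2C_X\,\tau$. This puts the supremum inside the expectation, as in the textbook statement.

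\textbf{On your constant.} Your displayed closed form for $C_{EM}^2$ is not justified for all $\tau\le1$, since $(1+\tau)(1+L\tau)^2\le 1+(1+3L)\tau$ can fail. Instead set $A=(1+\tau)(1+L\tau)^2\le e^{(1+2L)\tau}$ and use $\sum_{j<K}A^j\le KA^K$. This gives cleanly $C_{EM}^2=2L^2C_XTe^{(1+2L)T}$.
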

		By the same particle distance bound as in \eqref{eq:Wasserstein:particles}, one obtains:
		\begin{corollary}\label{corr:EM:RateWasserstein}
			Under assumptions~\ref{assumptions:a},\ref{assumptions:rho0}, the empirical distribution functions $\rho^N$ of the particles \eqref{eq:particle} and their EM discretization \eqref{eq:EMScheme} satisfy
			\begin{equation}\label{eq:EM:RateWasserstein}
				\sup_{k} \mathbb E[W_1(\rho^N_{t_k}, \rho^{N,\tau}_k)]\leq C_{EM}\tau^{1/2}.\end{equation}
		\end{corollary}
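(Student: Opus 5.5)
The statement to prove is Corollary~\ref{corr:EM:RateWasserstein}. The plan is to reduce it to a particle-wise coupling bound and then invoke the strong Euler--Maruyama rate of Proposition~\ref{prop:EM:Rate}.

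\emph{Step 1: couple the two particle systems.} Fix a time index $k$. The empirical measures are $\rho^N_{t_k}=\frac1N\sum_n\delta_{X^n_{t_k}}$ and $\rho^{N,\tau}_k=\frac1N\sum_n\delta_{X^{n,\tau}_k}$. Each scheme particle $X^{n,\tau}$ is driven by the same initial datum $X^n_0$ and the same Brownian increments, $\sqrt{\tau}\,\zeta^n_j=W^n_{t_{j+1}}-W^n_{t_j}$, as its continuum counterpart $X^n$. The identity permutation is therefore an admissible coupling of the two empirical measures. As in \eqref{eq:Wasserstein:particles}, this gives pathwise
\[
W_1(\rho^N_{t_k},\rho^{N,\tau}_k)\le \frac1N\sum_{n=1}^N\left\lvert X^n_{t_k}-X^{n,\tau}_k\right\rvert .
\]

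\emph{Step 2: take expectations and apply the strong rate.} By linearity of expectation and exchangeability, every particle pair $(X^n,X^{n,\tau})$ has the same law as $(X,X^\tau)$ in Proposition~\ref{prop:EM:Rate}. Hence the expectation of the right-hand side equals $\mathbb E\lvert X_{t_k}-X^\tau_k\rvert$. Jensen's (or Cauchy--Schwarz) inequality then gives
\[
\mathbb E\left\lvert X_{t_k}-X^\tau_k\right\rvert\le\left(\mathbb E\left\lvert X_{t_k}-X^\tau_k\right\rvert^2\right)^{1/2}\le C_{EM}\tau^{1/2}
\]
by \eqref{eq:EM:Rate}. Since \eqref{eq:EM:Rate} already holds uniformly in $k$, taking the supremum over $k$ yields \eqref{eq:EM:RateWasserstein}.

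\emph{Main point of care.} The only thing to check is that the coupling in Step~1 is legitimate, i.e.\ that each scheme particle uses the same noise and initial condition as its continuum counterpart. This is exactly the strong-convergence setting of Proposition~\ref{prop:EM:Rate}. Without this synchronous coupling one would only have weak error bounds, and the pathwise inequality of Step~1 would fail. Everything else follows directly from the earlier results.
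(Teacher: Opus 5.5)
Your proposal is correct and is essentially the paper's argument: the paper also derives the corollary from the identity-permutation bound \eqref{eq:Wasserstein:particles} applied to synchronously coupled particle pairs, then takes expectations and uses the strong Euler--Maruyama rate \eqref{eq:EM:Rate}, with Jensen to pass from the second to the first moment. One small correction to your closing remark: the pathwise inequality in Step~1 holds for any labeling of the two particle clouds, because the identity matching is always an admissible coupling of two $N$-point empirical measures, so it does not fail without synchronous coupling; what the synchronous coupling provides is that the right-hand side is controlled by the strong rate.
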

		
		\subsection{Proofs of Lemmas~\ref{lem:tildeY} and \ref{lem:EulerScheme:Stab+Acc}}
		We now apply these results to the setting of  $\widetilde{Y}_{i,t}$  introduced in~\eqref{eq:tildeGL} to prove~\Cref{lem:tildeY}.
		\begin{proof}[Proof of~\Cref{lem:tildeY}]
			\begin{enumerate}[(a)]
				\item Follows directly from \Cref{cor:adjoint:MomentBd+Lipschitz}, since the $\widetilde Y_{i,t}$ are i.i.d. copies of $Y_t$.
				\item Exploit stability of the adjoint according to \Cref{prop:adjParticles:Existence+Bounds}
				\begin{align*}
					\mathbb E
					\left\lvert  \widetilde Y_{i,t} - Y_{i,t}
					\right\rvert 
					\leq  & \  C_Y'  \mathbb E
					\left\lvert  \left.\nabla\frac{\delta\mathcal J}{\delta\rho}
					\right\rvert_{\rho_T}
					(X_{i,T}) -\left.\nabla\frac{\delta\mathcal J}{\delta\rho}
					\right\rvert_{\rho_T^N}
					(X_{i,T})\,
					\right\rvert 
					\leq \ C_Y' C_J \mathbb E[W_1(\rho_T, \rho^N_T)] \\
					= & C_Y\alpha(N).
				\end{align*}
				The rate~\eqref{eq:alphaN} follows from~\cite{fournier2015rate} and    $C_J$ is the constant from Assumptions~\ref{assumptions:Lagrangian}.
			\end{enumerate}
			
		\end{proof}

		We similarly prove~\Cref{lem:EulerScheme:Stab+Acc}:
		\begin{proof}[Proof of~\Cref{lem:EulerScheme:Stab+Acc}]\hfill \\
			\begin{enumerate}[(a)]
				\item  Stability follows from  a classical argument using the boundedness of $\grd a$
				\begin{align*}
					\left\lvert\widetilde Y_{i,k}^\tau\right\rvert = & \left\lvert \left(\Pi_{l=k}^{K-1} (1+\tau \grd a(X^\tau_{i,l}) )\right)\widetilde Y_{i,K}^\tau\right\rvert  \\
					\leq & \ (1+\tau \|\grd  a\|_{\infty})^{K-k}\left\lvert\widetilde Y_{i,K}^\tau\right\rvert\leq e^{K\tau \|\grd  a\|_{\infty}}\left\lvert\widetilde Y_{i,K}^\tau\right\rvert&=  e^{T \|\grd  a\|_{\infty}}\left\lvert \zeta \right\rvert.
				\end{align*}
				\item    
				Subtract the Euler scheme \eqref{eq:EulerScheme:Adjoint} from the continuous evolution \eqref{eq:adjoint_particle} to trace the error $e_k:= \widetilde Y_{i,t_k}- \widetilde Y_{i,k}^\tau$:
				\[
				\begin{aligned}
					e_k 
					&= e_{k+1}
					+ \int_{t_k}^{t_{k+1}} \widetilde Y_{i,s}\grd a(X_{i,s}) +(1-1) \widetilde Y_{i,t_{k+1}}\grd a(X_{i,t_k}) 
					\\
					&\hspace{2.2cm}+(1-1) \widetilde Y_{i,t_{k+1}}\grd a(X_{i,k}^\tau)- \widetilde Y_{i,k+1}^\tau\grd a(X_{i,k}^\tau)\d s 
					\\
					&= e_{k+1}(I+\tau \grd a(X_{i,k}^\tau)) + B_1 + B_2.
				\end{aligned}
				\]
				In the following, we bound $\E \left\lvert B_1\right\rvert$ and $\E \left\lvert B_2\right\rvert$ using boundedness and Lipschitzness of $\grd a$.  The  Lipschitzness of $Y$  and H\"older continuity of $X$ in $t$ according to  \Cref{cor:adjoint:MomentBd+Lipschitz,prop:Particles:Existence+Bounds} then show 
				\begin{align*}
					\E \left\lvert B_1\right\rvert= & \ \E\left\lvert \int_{t_k}^{t_{k+1}} \widetilde Y_{i,s} \grd a(X_{i,s}) - \widetilde Y_{i,t_{k+1}} \grd a(X_{i,t_k})\d s\right\rvert  \\
					\leq & \ \int_{t_k}^{t_{k+1}}  \|\grd a\|_{\infty} \E\lvert \widetilde Y_{i,s}  - \widetilde Y_{i,t_{k+1}}\rvert  +  \|\grd^2 a \|_{\infty}\E[\lvert\widetilde Y_{i,t_{k+1}}\rvert \left\lvert X_{i,s}-X_{i,t_{k}}\right\rvert ] \d s \\
					\leq & \|a\|_{C^2} (C_Y\tau^{1/2} + C_Y^{1/2}C_X^{1/2})\tau^{3/2},
				\end{align*}
				
				Furthermore, strong convergence of the Euler--Maruyama scheme \cite{kloeden1992stochastic}, as repeated in \Cref{prop:EM:Rate}, 
				implies
				\begin{align*}
					\E \left\lvert B_2\right\rvert= & \ \E\left\lvert \int_{t_k}^{t_{k+1}} \widetilde  Y_{i,t_{k+1}} (\grd a(X_{i,t_k})- \grd a(X_{i,k}^\tau))\d s\right\rvert  \\ 
					\leq &\ \|\grd^2 a\|_\infty \int_{t_k}^{t_{k+1}} \E[\lvert \widetilde Y_{i,t_{k+1}}\rvert\lvert X_{i,t_k}-X_{i,k}^\tau\rvert]\d s \quad  \leq \quad  \| a\|_{C^2} C_Y^{1/2} C_{EM}\tau^{3/2}.
				\end{align*}
				Consequently, after collecting all  constants into $C$, one obtains
				\[
				\mathbb E\left\lvert e_k\right\rvert
				\leq
				(1+C\tau)\mathbb E\left\lvert e_{k+1}\right\rvert
				+
				C\tau^{3/2}
				\]
				The  recursion resolves to
				\begin{align*}
					\mathbb E\left\lvert e_k\right\rvert \leq & \ (1+C\tau)\mathbb E\left\lvert e_K\right\rvert +C\tau^{3/2} \sum_{i= 0} ^{K-k-1} (1+C\tau)^i =\  C\tau^{3/2} \sum_{i= 0}^{K-k-1} (1+C\tau)^i\\
					\leq & \ C\tau^{3/2} K (1+C\tau)^K  \quad \leq C T e^{CT} \tau^{1/2} \quad =:\quad  C_{\widetilde Y^\tau}'\tau^{1/2},
				\end{align*}
				uniformly in $k$, because $\mathbb E\left\lvert e_K\right\rvert= 0$ and $(1+C\tau)^K\leq e^{CK\tau}$ with $K\tau = T$.
			\end{enumerate}
		\end{proof}

\section*{Funding}
KH acknowledges support from the Jet Propulsion Laboratory PDRDF 24AW0133. QL acknowledges support from ONR-N000142612095, NSF-DMS-2608503 and Vilas associate award. YY is partially supported by National Science Foundation under Grant DMS-2409855 and DMS-2540324, and Office of Naval Research Award N00014-24-1-2088. 

\section*{Acknowledgements}
		The authors would like to thank Ludovico Theo Giorgini for unveiling the connection to the fluctuation-dissipation theorem.
	
\section*{Data Availability}
	
	Data will be made available on reasonable request.

	\bibliographystyle{abbrv}
	\bibliography{lit.bib}
	
\end{document}